\newcommand{\beq}{\begin{equation}}
\newcommand{\eeq}{\end{equation}}
\newcommand{\ben}{\begin{eqnarray}}
\newcommand{\een}{\end{eqnarray}}
\newcommand{\beno}{\begin{eqnarray*}}
\newcommand{\eeno}{\end{eqnarray*}}
\newcommand{\w}[1]{\langle {#1} \rangle}
\newcommand{\p}{\partial}
\newcommand{\na}{\nabla}
\newcommand{\D}{\Delta}
\newcommand\diverg{\mathop{\mbox{\rm div}}}
\theoremstyle{plain}
\numberwithin{equation}{section}
\newtheorem{theorem}{Theorem}[section]
\newtheorem{corollary}[theorem]{Corollary}
\newtheorem{lemma}[theorem]{Lemma}
\newtheorem{proposition}[theorem]{Proposition}
\theoremstyle{definition}
\newtheorem{definition}[theorem]{Definition}
\newtheorem{remark}{Remark}
\newcommand{\ta}{\theta}
\newcommand{\e}{\varepsilon}
\newcommand{\R}{{\mathbb R}}
\newcommand\Z{{\mathbb{Z}}}
\begin{document}
\bibliographystyle{plainmma}
\baselineskip=24pt

\begin{center}
{\Large\bf Global well-posedness for
2-D Boussinesq system with the temperature-dependent viscosity and supercritical dissipation}\\[2ex]

\footnote{$^{*}$
Email Address: pingxiaozhai@163.com (X. Zhai); \
 zmchen@szu.edu.cn (Z. Chen); \
  bqdong@ahu.edu.cn (B. Dong).}
Xiaoping Zhai, Boqing Dong \ and \  Zhimin Chen\\[2ex]
School  of Mathematics and Statistics, Shenzhen University,
\\[0.5ex]
 Shenzhen,  Guangdong 518060, P. R. China\\[2ex]

\end{center}

\bigskip
\centerline{\large\bf Abstract}
The present paper is dedicated to the global well-posedness issue for the Boussinesq system with the temperature-dependent viscosity in $\R^2.$
We
aim at extending the work by Abidi and  Zhang  ( Adv. Math. 2017 {(305)} 1202--1249 ) to a supercritical dissipation  for temperature.

\date{\today}
\noindent {\bf Key Words:}
{Global well-posedness;  Boussinesq system; Littlewood-Paley theory}

\noindent {\bf Mathematics Subject Classification (2010)} {35Q30; 35Q61;  76D05 }

\bigskip
\leftskip 0 true cm
\rightskip 0 true cm

\section{\Large\bf Introduction and the main result}
In this paper, we mainly study the Cauchy problem  of the   Boussinesq system with the temperature-dependent viscosity  in  $\R^2 $:
\begin{eqnarray}\label{m}
\left\{\begin{aligned}
&\partial_t\theta+u\cdot\nabla\theta+\kappa |D|^\alpha \theta =0,\\
&\partial_t u+ u\cdot\nabla u-\diverg(2\mu(\ta)d( u))+\nabla\Pi=\theta e_2,\\
&\diverg u =0,\\
&(\theta,u)|_{t=0}=(\theta_0,u_0),
\end{aligned}\right.
\end{eqnarray}
where $u=u(x,t)=(u_1(x,t),u_2(x,t))$ denotes the  velocity vector field, $d(u)=(\na u+\na u^T)/2$ denotes the deformation matrix, $\Pi=\Pi(x,t)$ is the scalar  pressure,
the scalar function $\theta=\theta(x,t)$ is the temperature, ${e}_2$ is the unit vector in $\R^2$,
the thermal conductivity coefficient
$\kappa \ge0$, the kinematic viscous coefficient $\mu(\ta)$ is a smooth,
positive and non-decreasing function on $[0, \infty)$.    Furthermore, in all that follows, we
shall always denote $|D|^s$ to be the Fourier multiplier with symbol $|\xi|^s$.
In the whole paper, we also assume that $\kappa =1$
and
\begin{align}\label{mutiaojian}
0<1\le \mu(\ta),\quad  \mu(\cdot)\in W^{2,\infty}(\R^+), \quad \mu(0)=1.
\end{align}

The Boussinesq system arises from a zeroth order approximation to the coupling between Navier-Stokes equations and the thermodynamic equations. It can be used as a model to describe many geophysical phenomena \cite{Pedloski}.
If we consider the
 more general Boussinesq system
 with the temperature-dependent viscosity and thermal diffusivity to
take the  following form:
\begin{eqnarray}\label{mm}
\left\{\begin{aligned}
&\partial_t\theta+u\cdot\nabla\theta-\diverg(\kappa(\ta)\na \ta)
 =0,\\
&\partial_t u+ u\cdot\nabla u-\diverg(\mu(\ta)\na u)+\nabla\Pi=\theta e_d,\\
&\diverg u =0,\\
&(\theta,u)|_{t=0}=(\theta_0,u_0),
\end{aligned}\right.
\end{eqnarray}
the problem becomes much more complicated.
Lorca and Boldr in \cite{Lorca1999}  proved the global existence of strong solution for small data, and the global existence of weak solution and the local existence and uniqueness of strong solution for general data in \cite{Lorca1996}.
Recently,
Wang and Zhang in \cite{wangchao} mainly used De-Giorgi method and Harmonic analysis tools to get the global existence of smooth solutions in $\R^2.$ Sun and Zhang in \cite{sunyongzhong}
extended the result in \cite{wangchao} to the case of bounded domain. More precisely, the authors in \cite{sunyongzhong}
got the global existence of strong solution to the initial-boundary value problem of the 2-D Boussinesq system and 3-D infinite Prandtl number model with viscosity and thermal conductivity depending on the temperature.
Li and Xu in \cite{lidong} also generalized the result in
\cite{wangchao} to the inviscid case (that is $\mu(\ta)=0$ ). They
got the global strong solution
for arbitrarily large initial data in Sobolev spaces $H^s(\R^2), s>2.$
 Francesco in \cite{Francesco}
obtained  the global existence of weak solutions to the
system \eqref{mm} in $\R^d$, with viscosity dependent on temperature. The initial temperature in \cite{Francesco} is only supposed to be bounded, while the initial velocity belongs to some critical Besov Space, invariant to the scaling of this system.
Jiu and Liu in \cite{jiu} obtained the global well-posedness of anisotropic nonlinear Boussinesq equations with horizontal temperature-dependent viscosity and vertical thermal diffusivity in $\R^2$.
Using $\kappa |D| \theta$ instead of $\diverg(\kappa(\ta)\na \ta)$ in system \eqref{mm}, Abidi and Zhang in \cite{zhangping2017} got  the global solution in $\R^2$
provided the viscosity coefficient is sufficiently close to some positive constant in $L^\infty$ norm.

When
$\kappa(\ta)$ and $\mu(\ta)$
 are two positive constants which do not depend on the temperature,
Cannon and  DiBenedetto in \cite{Cannon} used the classical method to get the
 global solutions in $\R^2.$
Recently, more and more researchers (see \cite{Bessaih}, \cite{chae},   \cite{danchin2017}, \cite{hmidi2009}, \cite{hmidi2010}, \cite{jiu}, \cite{ju}, \cite{Larios}, \cite{wugang}, \cite{wujiahong2016}, \cite{wujiahong2014}) pay much more attentions to the following model:
\begin{eqnarray}\label{mmm}
\left\{\begin{aligned}
&\partial_t\theta+u\cdot\nabla\theta+\kappa|D|^\alpha \ta
 =0,\\
&\partial_t u+ u\cdot\nabla u+\mu|D|^\beta \ta+\nabla\Pi=\theta e_d,\\
&\diverg u =0,\\
&(\theta,u)|_{t=0}=(\theta_0,u_0).
\end{aligned}\right.
\end{eqnarray}
where $\mu\geq 0$, $\kappa\geq 0$, $0\leq \alpha \le 2$ and $0\leq \beta\le 2$ are real parameters.
The fractional diffusion operators considered here  in appear naturally in the study in hydrodynamics as well as anomalous diffusion in semiconductor growth.
Mathematically, the problem for global regularity of \eqref{mmm} is an interesting and a subtle one. Intuitively, the lower the values of $\alpha, \beta$ are, the harder it is to prove that solutions emanating from sufficiently smooth and localized data persist globally. In particular, the problem with  no dissipation (i.e. $\mu=\kappa=0$) remains open. This is very similar to the Euler equation in two and three spatial dimensions  and in fact numerous studies explore the possibility of finite time blow up.

Our goal here is to relax the dissipation needed in \cite{zhangping2017} for global well-posedness in $\R^2$.
 More precisely, we get the following theorem:

\begin{theorem}\label{maintheorem}
For any $2/3<\alpha\le 1$, 	${8}/{(3\alpha-2)}< p < {1}/{C\|\mu(\cdot)-1\|_{L^\infty}}$, $ {\alpha }/{(2\alpha-1)}<q<\min\big\{2, {4\alpha}/{3(2\alpha-1)}\big\} $	and $3-2\alpha<s_0<{4\alpha }/{q}-8\alpha +6$. Assume $\ta_0\in (L^q\cap
\dot{H}^{-s_0}\cap H^{\alpha/2})\cap B_{p,\infty}^{\alpha/2}(\R^2) $ and $u_0\in
B_{\infty,1}^{-1}\cap H^1(\R^2)$ be a solenoidal vector filed. Then there exists some sufficiently small $\e_0$ so that if we assume
\begin{align}\label{tiaojian}
\|\mu(\cdot)-1\|_{L^\infty(\R^+)}\le\e_0,
\end{align}
 \eqref{m} has a unique global solution $( u, \ta)$ so that
\begin{align}\label{zhengzexing1}
&u\in C([0,\infty); H^1)\cap {\widetilde{L}}^2(\R^+;\dot{B}_{2,\infty}^{3/2})\cap L^1_{loc}(\R^+;B_{\infty,1}^{1}), \quad\partial_t u \in L^2(\R^+;L^2),
\end{align}
\begin{align}\label{zhengzexing2}
&\ta\in C([0,\infty);L^q\cap
\dot{H}^{-s_0}\cap H^{\alpha/2})\cap L^\infty(\R^+;B_{p,\infty}^{\alpha/2})\cap L^2(\R^+;H^\alpha)\cap \widetilde{L}^1_{loc}(\R^+;B_{p,\infty}^{{3\alpha}/{2}}).
\end{align}

\end{theorem}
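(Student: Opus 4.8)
The plan is to prove the theorem through the classical scheme of local well-posedness together with global-in-time a priori estimates and a continuation argument, using the smallness hypothesis \eqref{tiaojian} throughout to treat the non-constant part of the stress tensor as a perturbation of the constant-viscosity Stokes system. Since $\diverg u=0$ gives $\diverg d(u)=\frac12\D u$, I would first rewrite the momentum equation as a forced Stokes system,
\beno
\p_t u-\D u+\na\Pi=-u\cdot\na u+(\mu(\ta)-1)\D u+2\mu'(\ta)\na\ta\cdot d(u)+\ta e_2,
\eeno
and iterate on this formulation; the local existence and uniqueness in the spaces \eqref{zhengzexing1}--\eqref{zhengzexing2} then follow from a standard fixed-point argument, so the substance of the proof is the global bound.

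First I would collect the low-order estimates. The $L^2$ energy identity for $u$, using $\mu(\ta)\ge1$ to bound $\int 2\mu(\ta)|d(u)|^2$ from below, gives $u\in L^\infty_tL^2\cap L^2_t\dot H^1$. Positivity of $|D|^\alpha$ yields the maximum-principle bound $\|\ta(t)\|_{L^q}\le\|\ta_0\|_{L^q}$, a negative-Sobolev energy computation controls $\|\ta\|_{\dot H^{-s_0}}$ (here $3-2\alpha<s_0$ is what renders $\ta e_2$ integrable against these norms), and a direct $\dot H^{\alpha/2}$ energy estimate produces $\ta\in L^\infty_t H^{\alpha/2}\cap L^2_t H^\alpha$. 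Interpolation among these furnishes the auxiliary Lebesgue and Sobolev bounds used to absorb nonlinear contributions later.

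The core is the interplay between two critical quantities: the time-integrated Lipschitz norm $\int_0^t\|\na u\|_{L^\infty}\,d\tau$ of the velocity and the parabolic-gain norm $\|\ta\|_{\LL^1_tB_{p,\infty}^{3\alpha/2}}$ of the temperature. For the temperature I would apply $\D_j$ to the transport-diffusion equation, use the low-frequency bound $\||D|^\alpha\D_j\ta\|_{L^p}\gtrsim 2^{j\alpha}\|\D_j\ta\|_{L^p}$ to extract dissipation, estimate the commutator $[u\cdot\na,\D_j]\ta$ by $\|\na u\|_{L^\infty}$ times lower-order Besov norms via Bony's decomposition, and sum with weight $2^{j\alpha/2}$ to obtain
\beno
\|\ta\|_{\LL^\infty_t B_{p,\infty}^{\alpha/2}}+\|\ta\|_{\LL^1_t B_{p,\infty}^{3\alpha/2}}
&\lesssim&\|\ta_0\|_{B_{p,\infty}^{\alpha/2}}\exp\Big(C\int_0^t\|\na u\|_{L^\infty}\,d\tau\Big),
\eeno
which converts Lipschitz control of $u$ into the temperature regularity in \eqref{zhengzexing2}. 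For the velocity I would run the maximal $L^1$-regularity estimate for the Stokes system, noting that $u_0\in B_{\infty,1}^{-1}$ is exactly scaling-critical, so the Stokes flow maps into $\LL^1_tB_{\infty,1}^1$, which controls $\int_0^t\|\na u\|_{L^\infty}\,d\tau$; the terms $u\cdot\na u$, $\ta e_2$ and $2\mu'(\ta)\na\ta\cdot d(u)$ are placed in $L^1_tB_{\infty,1}^{-1}$ using the low-order bounds, the temperature Besov regularity, and the embeddings secured by the constraints on $p$, while the dangerous term $(\mu(\ta)-1)\D u$ is absorbed into the left-hand side by the smallness \eqref{tiaojian} (here \eqref{mutiaojian} guarantees $\mu(\ta)\in W^{2,\infty}$ inherits the regularity of $\ta$).

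The main obstacle is closing this coupled system: the velocity Lipschitz bound needs $\na\ta$ (hence the temperature Besov regularity), while the temperature regularity needs $\int\|\na u\|_{L^\infty}$, so neither estimate closes in isolation. I would break the loop with a single Gronwall-type inequality for $\int_0^t\|\na u\|_{L^\infty}\,d\tau+\|\ta\|_{\LL^1_\tau B_{p,\infty}^{3\alpha/2}}$, in which the supercritical dissipation forces every exponent to be tracked sharply. This is precisely where $\alpha>2/3$ enters, since $\na\ta\in\LL^1_tB_{p,\infty}^{3\alpha/2-1}$ is at positive regularity exactly when $3\alpha/2>1$, so that the product $\mu'(\ta)\na\ta\cdot d(u)$ is meaningful and its derivative loss is compensated by the $3\alpha/2$ gain from dissipation; the window ${8}/{(3\alpha-2)}<p$ is then dictated by the product and embedding estimates needed to land this term in the correct critical space. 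Once this combined quantity is globally bounded, the $H^1$ estimate for $u$ (again using \eqref{tiaojian} to control the variable-viscosity contribution to $\frac{d}{dt}\|\na u\|_{L^2}^2$), the bound on $\p_t u$ in $L^2_tL^2$, the parabolic smoothing giving $u\in\LL^2_t\dot{B}_{2,\infty}^{3/2}$, and the full regularity in \eqref{zhengzexing1}--\eqref{zhengzexing2} all follow; uniqueness is obtained by an energy estimate on the difference of two solutions combined with the Lipschitz control of $u$.
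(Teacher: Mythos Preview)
Your outline misses the decisive ingredient of the paper's argument: the time decay $\|\ta(t)\|_{L^2}\lesssim E_0\w{t}^{-s_0/\alpha}$, obtained by Schonbek's Fourier-splitting method applied to the $\ta$-equation (Proposition~\ref{shuaijian}). The basic $L^2$ energy identity for $u$ only gives
\[
\frac{d}{dt}\|u\|_{L^2}^2+\|\na u\|_{L^2}^2\lesssim\|\ta\|_{L^2}\|u\|_{L^2},
\]
so without decay of $\|\ta\|_{L^2}$ you get at best polynomial growth of $\|u\|_{L^\infty_tL^2}$ and of $\|\na u\|_{L^2_tL^2}$, not the uniform-in-$t$ bounds required for $\p_tu\in L^2(\R^+;L^2)$ and $u\in\widetilde L^2(\R^+;\dot B^{3/2}_{2,\infty})$. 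The assumption $\ta_0\in\dot H^{-s_0}$ and the parameter window for $s_0$ and $q$ are there precisely to run Schonbek's scheme and to make $s_0/\alpha>1$, hence $\ta\in L^1_tL^2$; your remark that ``$3-2\alpha<s_0$ is what renders $\ta e_2$ integrable against these norms'' misidentifies the role of $s_0$.

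There are two further gaps in the closure. First, the $\dot H^1$ estimate for $u$ (Proposition~\ref{pingping565} combined with Proposition~\ref{pingping1}) produces a bound of the form
\[
\|\na u\|_{L^\infty_tL^2}^2+\|\p_tu\|_{L^2_tL^2}^2\le CM_0\big(e+\cdots+\|\na u\|_{L^2_tL^p}\big)^{C\|\ta_0\|_{L^\infty}^2\|\na u\|_{L^2_tL^2}^2},
\]
with $\|\na u\|_{L^2_tL^p}$ itself controlled by $\|\p_tu\|_{L^2_tL^2}+\|\na u\|_{L^\infty_tL^2}$; this does not close by a ``single Gronwall'' because the exponent $C\|\ta_0\|_{L^\infty}^2\|\na u\|_{L^2_tL^2}^2$ is not a priori small. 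The paper breaks this loop via a non-concentration lemma (Lemma~\ref{jiarui1}), which shows $\|\na u\|_{L^2([t_1,t_2];L^2)}$ is small on short intervals, and then bootstraps interval by interval (Proposition~\ref{jiarui13}). Only after $\|\na u\|_{L^2_tL^p}$ is globally bounded does the paper estimate $\|u\|_{L^1_t\dot B^1_{\infty,1}}$ and then $\|\ta\|_{\widetilde L^1_tB^{3\alpha/2}_{p,\infty}}$; your proposed direct coupling of these two quantities skips the layer where the actual difficulty lies. Second, uniqueness here cannot be done by a plain $L^2$ energy estimate on the difference: the right-hand side produces a term like $\|\delta\ta\|_{L^\gamma}^2\|\na u^1\|_{L^{4p/(3\alpha p-4-2p)}}^2$, and $\|\delta\ta\|_{L^\gamma}$ is only bounded by $\|\delta\ta\|_{\dot B^0_{\gamma,\infty}}$ times a logarithm of higher norms, so the paper needs Osgood's lemma rather than Gronwall.
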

Moreover, we have
\begin{align}\label{xiaoying}
\|\ta(t)\|_{L^2}\le C
{E}_0\w{t}^{-{s_0}/{\alpha}},
\end{align}
with
\begin{align}\label{e0}
{E}_0=\mathcal{E}_0(1+\mathcal{E}_0),
\quad\mathcal{E}_0=\|\ta_0\|_{\dot{H}^{-s_0}}+\|\ta_0\|_{L^2}
+(\|u_0\|_{L^2}+\|\ta_0\|_{L^q})
(1+\|\ta_0\|_{L^q}).
\end{align}
\begin{remark}
The proof about this theorem shares the same ideas as the case $\alpha=1$ treated in \cite{zhangping2017} but with much more technical difficulties.
\end{remark}

The paper is organized as follows. In Section 2, we recall the Littlewood-Paley theory and give some useful lemmas. In Section 3, we take several steps to give  the key a priori estimates.
 In Section 4, we complete the proof of our main theorem.

Let us complete this section by describing the notations which will be used in the sequel.
\noindent$\mathbf{Notations:}$ Let $A$, $B$ be two operators, we denote $[A, B] = AB - BA$, the commutator
between $A$ and $B$. For $a\lesssim b$, we mean that there is a uniform constant $C$, which may
be different on different lines, such that $a \le C b$.
For $X$ a Banach space and $I$ an interval of $\mathbb{R}$, we denote by $C(I; X)$ the set of
continuous functions on $I$ with values in $X$. For $q \in [1, +\infty]$, the notation $L^q (I; X)$ stands for the set of measurable
functions on $I$ with values in $X$, such that $t \rightarrow \|f(t)\|_{ X }$ belongs to $L^q (I)$.
We always let $(d_j)_{j\in\mathbb{Z}}$
(resp. $(c_j)_{j\in\Z}$) be a
generic elements of $\ell^1(\mathbb{Z})$ (resp. $\ell^2(\mathbb{Z})$) so that $\sum_{j\in\mathbb{Z}}d_j=1$ (resp. $\sum_{j\in\mathbb{Z}}c_j^2=1$).

  \bigskip

\section{\Large\bf Preliminaries}
In this section, we recall some basic facts on Littlewood-Paley theory (see \cite{bcd} for instance).
Let $\chi,\varphi$ be two smooth radial functions valued in the interval [0,1],
the support of $\chi$ be the ball $\mathscr{B}=\{\xi\in\R^d: |\xi|\leq{4}/{3}\}$,
the support of $\varphi$ be the annulus $\mathscr{C}=\{\xi\in\R^d: {3}/{4}\leq|\xi|\leq{8}/{3}\}$, so that
\begin{align*}
\sum_{j \in \mathbb{Z}} \varphi(2^{-j}\xi)&=1,\quad \forall\,\xi\in\R^d \setminus \{0\},
\end{align*}
\begin{align*}
\chi(\xi)+\sum_{j \ge0} \varphi(2^{-j}\xi)&=1, \quad \forall\,\xi\in\R^d.
\end{align*}
Let $h=\mathcal {F}^{-1}\varphi$ and $\widetilde{h}=\mathcal {F}^{-1}\chi$, the inhomogeneous dyadic blocks ${\Delta}_j$ are defined as follows:
\begin{align*}
\mathrm{if} \quad &j=-1, \quad{\Delta}_j f={\Delta}_{-1} f=\int_{\R^d}\widetilde{h}(y)f(x-y)dy,\\
\mathrm{if} \quad &j\ge0, \ \ \quad{\Delta}_j f=2^{jd}\int_{\R^d}h(2^{j}y)f(x-y)dy, \quad \mathrm{if }\quad j\le -2, \quad {\Delta}_j f=0.
\end{align*}
 The inhomogeneous low-frequency cut-off operator $S_j$ is defined by
 $S_j f=\sum_{j'\le j-1}{\Delta}_{j'}f.$

\begin{definition}
Let $s\in\R$ and $1\leq p,r\leq\infty$. The inhomogeneous Besov space ${B}_{p,r}^{s}(\R^d)$
consists of all the distributions $u$ in $\mathscr{S}^{'}(\R^d)$ such that
$$
\|u\|_{{B}_{p,r}^{s}}\stackrel{\mathrm{def}}{=}\left\|\big(2^{js}\|{\Delta}_{j}u\|_{L^{p}}\big)_{j\ge-1}\right\|_{\ell^{r}}
<\infty.
$$
\end{definition}
\begin{remark}\label{equivalent defn}
Let $s\in\mathbb{R}$, $1\leq p,r\leq\infty$ and $u\in\mathscr{S}^{'}(\mathbb{R}^d)$. Then there exists a positive constant $C$ such that $u$ belongs to ${B}_{p,r}^{s}(\mathbb{R}^d)$ if and only if there exists $\{c_{j,r}\}_{j\ge-1}$ such that $c_{j,r}\geq0$, $\|c_{j,r}\|_{\ell^{r}}=1$ and
$$
\|{\Delta}_{j}u\|_{L^p}\leq Cc_{j,r}2^{-js}\|u\|_{{B}_{p,r}^{s}},\quad \forall j\ge-1.
$$
If $r=1$, we denote by $d_j\stackrel{\mathrm{def}}{=}c_{j,1}$.
\end{remark}

We also need to use Chemin-Lerner type Besov spaces introduced in (see \cite{bcd}).
\begin{definition}
Let $s\in\mathbb{R}$ and $0<T\leq +\infty$. We define
$$
\|u\|_{\widetilde{L}_{T}^{\sigma}({B}_{p,r}^{s})}\stackrel{\mathrm{def}}{=}
\left(
\sum_{j\ge-1}
2^{jrs}\left(\int_{0}^{T}\|\dot{\Delta}_{j}u(t)\|_{L^p}^{\sigma}dt\right)^{{r}/{\sigma}}
\right)^{{1}/{r}}
$$
for $p\in[1,\infty]$, $r, \sigma \in [1,\infty)$,
and with the standard modification for $r=\infty$ or $\sigma=\infty$.
\end{definition}
\begin{remark}\label{neicha}
It is easy to observe that for $0<s_1<s_2,$ $\theta\in[0,1]$, $p,r,\lambda,\lambda_1,\lambda_2\in[1,+\infty]$, we have the following interpolation inequality in the Chemin-Lerner space (see \cite{bcd}):
\begin{align*}
\|u\|_{\widetilde{L}^\lambda_{T}({B}_{p,r}^s)}\le\|u\|^\theta_{\widetilde{L}^{\lambda_1}_{T}({B}_{p,r}^{s_1})}
\|u\|^{1-\theta}_{\widetilde{L}^{\lambda_2}_{T}({B}_{p,r}^{s_2})}
\end{align*}
with ${1}/{\lambda}={\theta}/{\lambda_1}+{(1-\theta)}/{\lambda_2}$ and $s=\theta s_1+(1-\theta)s_2$.
\end{remark}
Let us emphasize that, according to the Minkowski inequality, we have
\begin{align*}
\|f\|_{\widetilde{L}^\lambda_{T}({B}_{p,r}^s)}\le\|f\|_{L^\lambda_{T}({B}_{p,r}^s)}\hspace{0.5cm} \mathrm{if }\hspace{0.2cm}  \lambda\le r,\hspace{0.5cm}
\|f\|_{\widetilde{L}^\lambda_{T}({B}_{p,r}^s)}\ge\|f\|_{L^\lambda_{T}({B}_{p,r}^s)},\hspace{0.5cm} \mathrm{if }\hspace{0.2cm}  \lambda\ge r.
\end{align*}

The following Bernstein's lemma will be repeatedly used throughout this paper.

\begin{lemma}\label{bernstein}
Let $\mathcal{B}$ be a ball and $\mathcal{C}$ a ring
 of $\mathbb{R}^d$. A constant $C$ exists so that for any positive real number $\lambda$, any
non-negative integer k, any smooth homogeneous function $\sigma$ of degree m, and any couple of real numbers $(a, b)$ with
$1\le a \le b$, there hold
\begin{align*}
&&\mathrm{Supp} \,\hat{u}\subset\lambda \mathcal{B}\Rightarrow\sup_{|\alpha|=k}\|\partial^{\alpha}u\|_{L^b}\le C^{k+1}\lambda^{k+d(1/a-1/b)}\|u\|_{L^a},\\
&&\mathrm{Supp} \,\hat{u}\subset\lambda \mathcal{C}\Rightarrow C^{-k-1}\lambda^k\|u\|_{L^a}\le\sup_{|\alpha|=k}\|\partial^{\alpha}u\|_{L^a}
\le C^{k+1}\lambda^{k}\|u\|_{L^a},\\
&&\mathrm{Supp} \,\hat{u}\subset\lambda \mathcal{C}\Rightarrow \|\sigma(D)u\|_{L^b}\le C_{\sigma,m}\lambda^{m+d(1/a-1/b)}\|u\|_{L^a}.
\end{align*}
\end{lemma}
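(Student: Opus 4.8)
The plan is to treat all three estimates through a single mechanism: whenever $\hat u$ is supported in a dilate of a fixed compact set, I realize each operator as convolution against a rescaled, \emph{fixed} Schwartz kernel, and then I combine Young's convolution inequality with the elementary scaling law $\|g(\lambda\cdot)\|_{L^c}=\lambda^{-d/c}\|g\|_{L^c}$. The scaling exponents will automatically produce $k+d(1/a-1/b)$, $k$, and $m+d(1/a-1/b)$ in the three cases.

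First I would fix, once and for all, a function $\phi\in C_c^\infty(\R^d)$ with $\phi\equiv1$ on a neighborhood of the ball $\mathcal B$, and set $g=\mathcal{F}^{-1}\phi$. If $\mathrm{Supp}\,\hat u\subset\lambda\mathcal B$, then $\phi(\lambda^{-1}\xi)=1$ on $\mathrm{Supp}\,\hat u$, so $\widehat{\partial^\alpha u}(\xi)=(i\xi)^\alpha\phi(\lambda^{-1}\xi)\hat u(\xi)$, and inverting gives $\partial^\alpha u=(\partial^\alpha g_\lambda)*u$ with $g_\lambda(x)=\lambda^d g(\lambda x)$, whence $\partial^\alpha g_\lambda(x)=\lambda^{d+k}(\partial^\alpha g)(\lambda x)$ for $|\alpha|=k$. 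For $1\le a\le b$ the exponent $c$ defined by $1+1/b=1/c+1/a$ lies in $[1,\infty]$, so Young's inequality and scaling yield
\begin{align*}
\|\partial^\alpha u\|_{L^b}\le\|\partial^\alpha g_\lambda\|_{L^c}\|u\|_{L^a}=\lambda^{k+d(1/a-1/b)}\|\partial^\alpha g\|_{L^c}\|u\|_{L^a},
\end{align*}
which is the first inequality once $\|\partial^\alpha g\|_{L^c}\le C^{k+1}$.

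For the ring I would fix $\psi\in C_c^\infty(\R^d\setminus\{0\})$ with $\psi\equiv1$ on $\mathcal C$. The upper bound in the second line is the ball computation with $a=b$. For the lower bound I invert the derivatives: on $\mathrm{Supp}\,\psi$ the map $\xi\mapsto|\xi|^{-2k}$ is smooth, and the multinomial identity $|\xi|^{2k}=\sum_{|\alpha|=k}\frac{k!}{\alpha!}\xi^{2\alpha}$ lets me write, on $\mathrm{Supp}\,\hat u$,
\begin{align*}
\hat u(\xi)=\frac{(-1)^k}{\lambda^{k}}\sum_{|\alpha|=k}\frac{k!}{\alpha!}\,m_\alpha(\lambda^{-1}\xi)\,\widehat{\partial^\alpha u}(\xi),\qquad m_\alpha(\eta):=(i\eta)^\alpha\frac{\psi(\eta)}{|\eta|^{2k}}.
\end{align*}
Each $m_\alpha$ is smooth and compactly supported, so $\check m_\alpha=\mathcal{F}^{-1}m_\alpha$ is Schwartz; inverting and applying Young with $L^1*L^a\hookrightarrow L^a$, together with $\sum_{|\alpha|=k}\frac{k!}{\alpha!}=d^k$, gives
\begin{align*}
\|u\|_{L^a}\le\lambda^{-k}\Big(\sum_{|\alpha|=k}\tfrac{k!}{\alpha!}\|\check m_\alpha\|_{L^1}\Big)\sup_{|\alpha|=k}\|\partial^\alpha u\|_{L^a}\le C^{k+1}\lambda^{-k}\sup_{|\alpha|=k}\|\partial^\alpha u\|_{L^a},
\end{align*}
which is the claimed lower bound. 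For a smooth $\sigma$ homogeneous of degree $m$, homogeneity gives $\sigma(\xi)\hat u(\xi)=\lambda^m\Theta(\lambda^{-1}\xi)\hat u(\xi)$ on $\mathrm{Supp}\,\hat u$, with $\Theta:=\sigma\psi\in C_c^\infty(\R^d\setminus\{0\})$; the ball computation with kernel $\mathcal{F}^{-1}\Theta$ then yields $\|\sigma(D)u\|_{L^b}\le C_{\sigma,m}\lambda^{m+d(1/a-1/b)}\|u\|_{L^a}$.

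The one genuinely technical point, and the step I expect to need the most care, is the uniform-in-$k$ control of the kernel norms, namely $\|\partial^\alpha g\|_{L^c}\le C^{k+1}$ and $\|\check m_\alpha\|_{L^1}\le C^{k+1}$, which is what upgrades the naive bounds into the stated geometric constant $C^{k+1}$. I would establish these on the Fourier side: since all the relevant multipliers live on one fixed ball or annulus, each differentiation (respectively each extra power $\xi^\alpha$ together with the division by $|\xi|^{2k}$) produces only a fixed number of new terms with coefficients of fixed size, so the kernel $L^1$-norms grow at most geometrically rather than factorially, and the combinatorial factor is absorbed by the identity $\sum_{|\alpha|=k}k!/\alpha!=d^k$. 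Everything else is bookkeeping of scaling exponents, already matched above.
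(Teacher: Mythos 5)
Your proposal is correct and is essentially the standard proof: the paper itself offers no proof of this lemma, quoting it as a known fact from \cite{bcd}, and your argument (realizing each operator as convolution with a rescaled fixed Schwartz kernel, applying Young's inequality with $1+1/b=1/c+1/a$, inverting the derivatives on the annulus via $|\xi|^{2k}=\sum_{|\alpha|=k}\frac{k!}{\alpha!}\xi^{2\alpha}$, and handling $\sigma(D)$ through $\Theta=\sigma\psi\in C_c^\infty(\R^d\setminus\{0\})$) is precisely the proof given in that reference. Your remark on the genuinely delicate point is also accurate: since only a fixed number ($2d$, say, via the weight $(1+|x|^2)^d$) of Fourier-side differentiations is ever needed, the kernel norms $\|\partial^{\alpha}g\|_{L^c}$ and $\|\check m_\alpha\|_{L^1}$ pick up only polynomial-in-$k$ coefficients times geometric factors from $|\eta|^{\pm k}$ on the fixed ball or annulus, which is absorbed into $C^{k+1}$ together with $\sum_{|\alpha|=k}k!/\alpha!=d^k$.
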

\begin{lemma}(see \cite{KP})\label{morser}
Let $s>0$, $1\le p,r\le \infty,$
$f,g\in L^\infty\cap{B}_{p,r}^{s}(\R^d)$,
then
\begin{equation*}%\label{morserinequality}
\|fg\|_{{B}_{p,r}^{s}}\le C\left(\|f\|_{{B}_{p,r}^{s}}\|g\|_{L^\infty}+\|g\|_{{B}_{p,r}^{s}}\|f\|_{L^\infty}\right).
\end{equation*}
\end{lemma}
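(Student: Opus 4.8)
The plan is to establish this Moser-type product estimate by means of Bony's paraproduct decomposition. Writing
\[
fg = T_f g + T_g f + R(f,g),
\]
where the paraproducts are $T_f g = \sum_j S_{j-1}f\,\Delta_j g$ and $T_g f = \sum_j S_{j-1}g\,\Delta_j f$, and the remainder is $R(f,g)=\sum_j\sum_{|j-j'|\le 1}\Delta_j f\,\Delta_{j'}g$, I would estimate the three contributions to $\|\Delta_k(fg)\|_{L^p}$ separately and then assemble them in the weighted sequence norm $\big\|(2^{ks}\|\Delta_k(fg)\|_{L^p})_k\big\|_{\ell^r}$ that defines $B_{p,r}^s$.

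For the two paraproducts I would exploit spectral localization. Each summand $S_{j-1}f\,\Delta_j g$ has Fourier support in a ring of size $\sim 2^j$, so $\Delta_k(S_{j-1}f\,\Delta_j g)=0$ unless $|k-j|\le N_0$ for some fixed integer $N_0$. Using that $\|S_{j-1}f\|_{L^\infty}\lesssim\|f\|_{L^\infty}$ by Lemma \ref{bernstein}, this gives
\[
\|\Delta_k(T_f g)\|_{L^p}\lesssim\sum_{|k-j|\le N_0}\|S_{j-1}f\|_{L^\infty}\|\Delta_j g\|_{L^p}\lesssim\|f\|_{L^\infty}\sum_{|k-j|\le N_0}\|\Delta_j g\|_{L^p}.
\]
Multiplying by $2^{ks}$, absorbing the finite-range factor $2^{(k-j)s}$ into the constant, and taking the $\ell^r$ norm in $k$ yields $\|T_f g\|_{B_{p,r}^s}\lesssim\|f\|_{L^\infty}\|g\|_{B_{p,r}^s}$, and symmetrically $\|T_g f\|_{B_{p,r}^s}\lesssim\|g\|_{L^\infty}\|f\|_{B_{p,r}^s}$. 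Both already sit inside the claimed right-hand side.

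The main obstacle, and the one place where the hypothesis $s>0$ is genuinely needed, is the remainder $R(f,g)$. Here the summand $\Delta_j f\,\Delta_{j'}g$ with $|j-j'|\le 1$ has Fourier support only in a ball of radius $\sim 2^j$, so $\Delta_k$ now collects contributions from all high frequencies $j\ge k-N_1$. With $\widetilde\Delta_j=\Delta_{j-1}+\Delta_j+\Delta_{j+1}$, Hölder's inequality gives
\[
\|\Delta_k R(f,g)\|_{L^p}\lesssim\sum_{j\ge k-N_1}\|\Delta_j f\|_{L^p}\|\widetilde\Delta_j g\|_{L^\infty}\lesssim\|g\|_{L^\infty}\sum_{j\ge k-N_1}\|\Delta_j f\|_{L^p}.
\]
Setting $a_j=2^{js}\|\Delta_j f\|_{L^p}$, whose $\ell^r$ norm is $\|f\|_{B_{p,r}^s}$, I obtain $2^{ks}\|\Delta_k R(f,g)\|_{L^p}\lesssim\|g\|_{L^\infty}\sum_{j\ge k-N_1}2^{(k-j)s}a_j$, which is the convolution of $a$ with the kernel $b_m=2^{ms}\mathbf 1_{m\le N_1}$. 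Since $s>0$, one has $\sum_{m\le N_1}2^{ms}<\infty$, so $b\in\ell^1(\Z)$, and Young's convolution inequality gives $\|R(f,g)\|_{B_{p,r}^s}\lesssim\|g\|_{L^\infty}\|f\|_{B_{p,r}^s}$. Summing the three contributions completes the proof. I would emphasize that for $s\le 0$ the geometric factor $\sum_{j\ge k}2^{(k-j)s}$ diverges, which is precisely why positivity of $s$ cannot be dropped in this formulation.
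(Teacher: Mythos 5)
Your proof is correct, and in fact the paper offers no proof of this lemma at all --- it is quoted with a citation to Kato--Ponce \cite{KP} --- so there is no internal argument to compare against; what you have written is the standard Bony-decomposition proof, essentially the tame product estimate of \cite{bcd} (Corollary 2.86 there). The comparison with the cited source is still instructive: Kato and Ponce prove the analogous estimate on the Bessel-potential scale $H^{s,p}$ via Coifman--Meyer-type multiplier and commutator machinery, whereas your paraproduct argument gives the statement directly on the Besov scale $B^s_{p,r}$ for all $1\le p,r\le\infty$, including the endpoint $r=\infty$ that this paper actually uses, and it isolates cleanly where $s>0$ enters: the two paraproducts $T_fg$, $T_gf$ are bounded for every $s\in\R$ because their dyadic pieces live on annuli, and only the remainder $R(f,g)$, whose pieces live on balls, requires summability of the kernel $b_m=2^{ms}\mathbf{1}_{m\le N_1}$, i.e.\ $s>0$. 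Two one-line remarks would make the argument airtight in the inhomogeneous setting of the paper: first, the annulus localization of $S_{j-1}f\,\Delta_jg$ holds only for $j$ above a fixed threshold (for $j\in\{-1,0,1\}$ the support is merely a ball), but these finitely many terms are absorbed exactly as in your remainder estimate since $s>0$; second, the uniform bounds $\|S_{j-1}f\|_{L^\infty}\lesssim\|f\|_{L^\infty}$ and $\|\widetilde\Delta_jg\|_{L^\infty}\lesssim\|g\|_{L^\infty}$ follow from the operators being convolutions with $L^1$-bounded kernels (equivalently Lemma \ref{bernstein} with $a=b=\infty$), which you use implicitly. Neither point is a gap.
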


The action of smooth functions on the space ${B}_{p,r}^{s}(\R^d)$ can be stated as follows:

\begin{lemma}\label{fuhe} (see \cite{bcd})
Let $I$ be an open interval of $\R$ and $F:I\rightarrow \R.$ Let $s>0$ and $\sigma$ be the smallest integer such that $\sigma\ge s,$ and $(p, r) \in [1, \infty]^2$. Assume that $F(0) =0$ and that $F''$ belongs to $W^{\sigma,\infty}(I; \R).$ Let $u, v\in {B}_{p,r}^{s}(\R^d)\cap L^\infty(\R^d)$ have values in $J\subset I.$ There exists a constant $C=C(s, I, J, N)$such that
\begin{align*}%\label{}
\|F(u)\|_{{B}_{p,r}^{s}}\le C(1+\|u\|_{L^\infty})^{\sigma}\|F''\|_{W^{\sigma,\infty}(I )} \|u\|_{{B}_{p,r}^{s}}
\end{align*}
and
\begin{align*}%\label{}
\|F\circ u-&F\circ v\|_{{B}_{p,r}^{s}}\le C(1+\|u\|_{L^\infty}+\|v\|_{L^\infty})^{\sigma}\|F''\|_{W^{\sigma,\infty}(I )} \nonumber\\
&\times( \| u- v\|_{{B}_{p,r}^{s}}\sup_{\tau\in[0,1]}\|v+\tau(u-v)\|_{L^\infty} +\|u-v\|_{L^\infty}\sup_{\tau\in[0,1]}\|v+\tau(u-v)\|_{{B}_{p,r}^{s}} ).
\end{align*}

\end{lemma}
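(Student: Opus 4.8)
The plan is to prove this classical Meyer-type composition estimate by the dyadic telescoping method of \cite{bcd}, and to deduce the second (difference) inequality from the first. Since $F(0)=0$, I would first isolate the affine part by writing $F(u)=F'(0)\,u+G(u)$ with $G(0)=G'(0)=0$: the linear piece contributes $|F'(0)|\,\|u\|_{B_{p,r}^{s}}$ (harmless, and absorbed under the normalization $F'(0)=0$ in force in the intended applications), while every nontrivial increment of $G$ involves only derivatives of $F$ of order $\ge2$, which is exactly what produces the $\|F''\|_{W^{\sigma,\infty}(I)}$ dependence. Continuing to write $F$ for $G$, I use the telescoping identity
\beq
F(u)=F(\D_{-1}u)+\sum_{q\ge0}\big(F(S_{q+1}u)-F(S_qu)\big)=F(\D_{-1}u)+\sum_{q\ge0}m_q\,\D_qu,
\eeq
where $m_q:=\int_0^1F'(S_qu+t\,\D_qu)\,dt$ and $S_{q+1}u=S_qu+\D_qu$. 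The low-frequency term $F(\D_{-1}u)$ is spectrally localized and is controlled directly by Lemma~\ref{bernstein} together with $\|\D_{-1}u\|_{L^\infty}\lesssim\|u\|_{L^\infty}$, so the whole problem reduces to estimating the series $\sum_{q\ge0}m_q\,\D_qu$ in $B_{p,r}^{s}$.

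The core computation is to bound $\|\D_k(m_q\D_qu)\|_{L^p}$ as $k$ runs over $\Z$. First I would establish, through the Fa\`a di Bruno formula and the Bernstein inequalities of Lemma~\ref{bernstein}, the derivative estimate
\beq
\|\p^\alpha m_q\|_{L^\infty}\lesssim 2^{q|\alpha|}\,(1+\|u\|_{L^\infty})^{\sigma}\,\|F''\|_{W^{\sigma,\infty}(I)},\qquad 0\le|\alpha|\le\sigma,
\eeq
the point being that each derivative falling on $m_q$ must hit the argument $S_qu+t\,\D_qu$, producing a factor $\p^{\beta}(S_qu+t\D_qu)$ of size $2^{q|\beta|}\|u\|_{L^\infty}$, while the chain rule only brings in derivatives of $F$ of order $\ge2$ (the case $\alpha=0$ uses $F'(0)=0$, giving $\|m_q\|_{L^\infty}\lesssim(1+\|u\|_{L^\infty})\|F''\|_{L^\infty}$). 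Setting $R_q:=m_q\,\D_qu$, I would then estimate $\D_kR_q$ in two regimes: for $k\le q+N_0$ the crude bound $\|\D_kR_q\|_{L^p}\lesssim\|m_q\|_{L^\infty}\|\D_qu\|_{L^p}$; for $k>q+N_0$ the spectral cut-off gains regularity, $\|\D_kR_q\|_{L^p}\lesssim 2^{-k\sigma}\sum_{|\beta|=\sigma}\|\p^\beta R_q\|_{L^p}$, and, distributing the $\sigma$ derivatives by Leibniz and inserting the bound above, $\|\D_kR_q\|_{L^p}\lesssim 2^{(q-k)\sigma}(1+\|u\|_{L^\infty})^{\sigma}\|F''\|_{W^{\sigma,\infty}(I)}\|\D_qu\|_{L^p}$.

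It then remains to multiply by $2^{ks}$, sum over $q$, and take the $\ell^r$ norm in $k$. With $c_q:=2^{qs}\|\D_qu\|_{L^p}\in\ell^r$, the regime $k\le q+N_0$ yields a discrete convolution against the kernel $2^{(k-q)s}\mathbf{1}_{\{q\ge k-N_0\}}$, summable because $s>0$, while the regime $k>q+N_0$ yields the kernel $2^{(s-\sigma)(k-q)}\mathbf{1}_{\{q<k-N_0\}}$, summable because $s<\sigma$. Applying the discrete Young inequality in each regime converts the double series into $\|(c_q)\|_{\ell^r}=\|u\|_{B_{p,r}^{s}}$ times a finite constant, which, combined with the low-frequency term and the linear part, gives the first inequality. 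I expect this summation to be the main obstacle: one must keep both kernels summable at once and carry the exact prefactor $(1+\|u\|_{L^\infty})^{\sigma}$ through every term. The only genuinely delicate point is the borderline $s=\sigma$ (i.e. $s\in\N$), where the high-frequency kernel ceases to be summable; this is handled by the standard device of spending one additional derivative.

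For the difference inequality I would avoid a second telescoping and reduce to the first estimate. Writing
\beq
F\circ u-F\circ v=(u-v)\,g,\qquad g:=\int_0^1F'\big(v+\tau(u-v)\big)\,d\tau,
\eeq
the tame product estimate of Lemma~\ref{morser} gives
\beq
\|F\circ u-F\circ v\|_{B_{p,r}^{s}}\lesssim\|u-v\|_{B_{p,r}^{s}}\,\|g\|_{L^\infty}+\|u-v\|_{L^\infty}\,\|g\|_{B_{p,r}^{s}}.
\eeq
Using $F'(0)=0$ (equivalently, estimating $F'$ by its mean value) gives $\|g\|_{L^\infty}\le\|F''\|_{L^\infty}\sup_{\tau\in[0,1]}\|v+\tau(u-v)\|_{L^\infty}$, while $\|g\|_{B_{p,r}^{s}}$ is bounded by applying the first composition estimate to the function $F'$ in place of $F$, which produces $\|F''\|_{W^{\sigma,\infty}(I)}\,\sup_{\tau\in[0,1]}\|v+\tau(u-v)\|_{B_{p,r}^{s}}$ up to the factor $(1+\|u\|_{L^\infty}+\|v\|_{L^\infty})^{\sigma}$. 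Substituting these two bounds reproduces exactly the stated difference inequality.
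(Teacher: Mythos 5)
The paper offers no proof of this lemma at all --- it is quoted from \cite{bcd} --- so the only meaningful comparison is with the standard reference proof, and your proposal reconstructs exactly that: Meyer's telescoping first linearization $F(u)=F(\Delta_{-1}u)+\sum_{q\ge0}m_q\,\Delta_qu$ with the two-regime frequency analysis and discrete Young summation for the first bound, and the Taylor identity $F\circ u-F\circ v=(u-v)\int_0^1F'\bigl(v+\tau(u-v)\bigr)\,d\tau$ combined with the tame product estimate of Lemma~\ref{morser} and a composition bound applied to $F'$ for the difference bound. The argument is correct in substance, including your treatment of the borderline case $s=\sigma$: spending one extra derivative is legitimate precisely because $F''\in W^{\sigma,\infty}$ makes $F^{(\sigma+2)}$ available (at worst costing one more power of $1+\|u\|_{L^\infty}$, immaterial in this paper's applications).

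Three remarks. (i) $F(\Delta_{-1}u)$ is \emph{not} spectrally localized --- composition destroys spectral support --- so Lemma~\ref{bernstein} alone does not dispatch it; it requires the same Fa\`a di Bruno/two-regime treatment as the series, with $F(0)=0$ (and your normalization $F'(0)=0$) supplying the $L^p$ membership $\|F(\Delta_{-1}u)\|_{L^p}\lesssim\|\Delta_{-1}u\|_{L^p}$. This is a slip of phrasing rather than of substance. (ii) Your normalization $F'(0)=0$ is genuinely necessary, not cosmetic: as literally stated the lemma fails for $F(x)=x$, since both left-hand sides are positive while both right-hand sides vanish ($F''\equiv0$); this hypothesis appears in the second-order statements of \cite{bcd} and was silently dropped in the paper's quotation, and in the difference inequality the linear part cannot be reabsorbed into the right-hand side, so what you prove is the correct intended statement. (iii) When you ``apply the first composition estimate to $F'$'', you must invoke the first-order version whose constant involves $\|(F')'\|_{W^{\sigma,\infty}}=\|F''\|_{W^{\sigma,\infty}}$; applying the lemma as literally stated to $F'$ would demand $F'''\in W^{\sigma,\infty}$, which is not assumed. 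Your own telescoping argument does deliver exactly this first-order version (only $F^{(2)},\dots,F^{(\sigma+2)}$ are ever used), so the reduction closes, but this point deserves to be made explicit.
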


We shall also use the following commutator's lemma to prove our theorem:
\begin{lemma} \label{jiaohuanzi}(Lemma 2.100 in \cite{bcd}).
Let $\sigma\in\R, 1\le r\le \infty,$ $1\le p\le p_1\le\infty$  and $v$ be a vector field over $\R^d$.
 Assume that
\begin{align*}%\label{jiao1}
\sigma>-d\min\{\frac{1}{p_1},\frac{1}{p'}\}\quad \mathrm{or}\quad \sigma>-1-d\min\{\frac{1}{p_1},\frac{1}{p'}\} \quad {if}\quad \mathrm{div }v=0.
\end{align*}
Define $
R_j\stackrel{\mathrm{def}}{=}[v\cdot\nabla, \Delta_j]f$ (or $R_j\stackrel{\mathrm{def}}{=}\mathrm{div}[v, \Delta_j]f$, if $\mathrm{div} v=0$). There exists a constant C depending continuously on $p,p_1,\sigma,$ and $d$, such that
\begin{align*}%\label{jiao2}
\|(2^{j\sigma}\|R_j\|_{L^p})_j\|_{\ell^r}\le C\|\nabla v\|_{B_{p_1,\infty}^{\frac dp}\cap L^{\infty}}\|f\|_{B_{p,r}^{\sigma}},\quad {if}\quad \sigma <1+\frac{d}{p_1}.
\end{align*}
Further, if $\sigma>0$ (or $\sigma>-1, \  if \ \mathrm{div} v=0$) and $\frac{1}{p_2}=\frac{1}{p}-\frac{1}{p_1}$, then
\begin{align*}%\label{jiao3}
\|(2^{j\sigma}\|R_j\|_{L^p})_j\|_{\ell^r}\le C(\|\nabla v\|_{ L^{\infty}}\|f\|_{B_{p,r}^{\sigma}}+\|\nabla v\|_{B_{p_1,r}^{\sigma-1}}\|\nabla f\|_{L^{p_2}}).
\end{align*}
Especially, when $\sigma>1+\frac{d}{p_1}$ ( or $\sigma=1+\frac{d}{p_1}$ and $r=1$), the above inequality ensures that
\begin{align*}%\label{jiao3-1}
\|(2^{j\sigma}\|R_j\|_{L^p})_j\|_{\ell^r}\le C\|\nabla v\|_{B_{p_1,r}^{\sigma-1}}\|f\|_{B_{p,r}^{\sigma}}.
\end{align*}
In the limit case $\sigma=-\min(\frac{d}{p_1},\frac{d}{p'})$ [or $\sigma=-1-\min(\frac{d}{p_1},\frac{d}{p'})$ if $\mathrm{div}v=0$ ],  we have
\begin{align*}%\label{jiao4}
\sup_{j\ge-1}2^{j\sigma}\|R_j\|_{L^p}\le C \|\nabla v\|_{B_{p_1,1}^{\frac {d}{p_1}}}\|f\|_{B_{p,\infty}^{\sigma}}.
\end{align*}
\end{lemma}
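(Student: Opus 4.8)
The plan is to follow the usual program for such quasilinear coupled systems: build approximate solutions by a Friedrichs-type regularization, derive global-in-time a priori bounds exactly matching the spaces \eqref{zhengzexing1}--\eqref{zhengzexing2}, pass to the limit by compactness, and finally prove uniqueness and the decay \eqref{xiaoying}. The organizing device is to split the constant-coefficient heat operator off from the variable viscosity. Since $\diverg u=0$, one has $\diverg(2\mu(\ta)d(u))=\mu(\ta)\D u+2\,d(u)\na(\mu(\ta))$, so the momentum equation becomes
$$\p_t u-\D u+\na\Pi=-u\cdot\na u+(\mu(\ta)-1)\D u+2\,d(u)\na(\mu(\ta))+\ta e_2.$$
The whole point of the smallness hypothesis \eqref{tiaojian} is that the perturbation $(\mu(\ta)-1)\D u$, estimated in the Littlewood--Paley framework, can be absorbed by the leading heat dissipation; this lets us treat the velocity equation as a forced heat/Stokes system and run maximal-regularity estimates.

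First I would install the bottom of the estimate hierarchy. For the temperature, the transport part is divergence-free and $|D|^\alpha$ obeys a maximum principle, so $\|\ta(t)\|_{L^r}\le\|\ta_0\|_{L^r}$ for every $r\in[1,\infty]$ (in particular the $L^q$ bound is propagated); pairing the $\ta$-equation with $\ta$ and with $|D|^\alpha\ta$ then gives $\ta\in L^\infty(\R^+;L^2)\cap L^2(\R^+;H^\alpha)$ and the $H^{\alpha/2}$ control. For the velocity, the energy identity together with $\mu\ge1$ and Korn's inequality yields $u\in L^\infty(\R^+;L^2)\cap L^2(\R^+;\dot H^1)$, the buoyancy forcing $\int\ta\,u_2$ being dominated by the temperature bounds.

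Next I would climb to the regularity asserted. The $H^1$ bound for $u$ and $\p_t u\in L^2(\R^+;L^2)$ come from testing the momentum equation with $\p_t u$; the coefficient-in-time term $\int\mu'(\ta)\,\p_t\ta\,|d(u)|^2$ is the delicate coupling and is handled by using the $\ta$-equation to replace $\p_t\ta$ by $-(u\cdot\na\ta+|D|^\alpha\ta)$ and then invoking the $L^2(\R^+;H^\alpha)$ bound. The Besov regularities are propagated through the smoothing of $e^{-t|D|^\alpha}$ and of the heat flow: applying $\D_j$ to both equations and using Lemma \ref{jiaohuanzi} for the transport commutators and Lemmas \ref{morser}--\ref{fuhe} for the composition $\mu(\ta)$, one upgrades $\ta\in\LL^\infty(\R^+;B_{p,\infty}^{\alpha/2})$ to $\ta\in\LL^1_{loc}(\R^+;B_{p,\infty}^{3\alpha/2})$ (a gain of $\alpha$ derivatives), while the datum $u_0\in B_{\infty,1}^{-1}$ gives $u\in L^1_{loc}(\R^+;B_{\infty,1}^{1})$ via heat smoothing; the latter controls $\na u\in L^1_{loc}(\R^+;L^\infty)$ and so closes the loop by furnishing Lipschitz control of the flow. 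The main obstacle is precisely closing this critical loop for $\alpha<1$: the force term $2\,d(u)\na(\mu(\ta))=2\mu'(\ta)\,d(u)\,\na\ta$ requires $\na\ta\in L^1_{loc}(\R^+;L^\infty)$, available only through $B_{p,\infty}^{3\alpha/2}\hookrightarrow W^{1,\infty}$, which forces $3\alpha/2-1>2/p$, hence $3\alpha-2>0$, i.e. $\alpha>2/3$; once the remaining product and interpolation inequalities (via Remark \ref{neicha}) are accounted for, this sharpens to $p>8/(3\alpha-2)$. All the constraints on $q$ and $s_0$ arise from making these interpolations and the negative-order estimate below close.

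Uniqueness I would obtain by estimating the difference of two solutions one derivative below the solution space, using Lemma \ref{fuhe} on the viscosity difference $\mu(\ta_1)-\mu(\ta_2)$ and Lemma \ref{jiaohuanzi} on the transport commutators, again absorbing the $(\mu(\ta)-1)$-type terms by \eqref{tiaojian} and concluding by Gronwall. Finally, the decay \eqref{xiaoying} follows by first propagating the $\dot H^{-s_0}$ norm of $\ta$: writing $u\cdot\na\ta=\diverg(u\ta)$, the pairing in $\dot H^{-s_0}$ is controlled using the constraints $3-2\alpha<s_0<4\alpha/q-8\alpha+6$, so that $\|\ta\|_{\dot H^{-s_0}}$ stays bounded by $\mathcal E_0$. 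A Fourier-splitting (Schonbek) argument on $\tfrac{d}{dt}\|\ta\|_{L^2}^2+2\||D|^{\alpha/2}\ta\|_{L^2}^2=0$, splitting frequencies at $|\xi|\sim\w{t}^{-1/\alpha}$ and bounding the low frequencies by the propagated $\dot H^{-s_0}$ norm, then produces exactly the algebraic rate $\w{t}^{-s_0/\alpha}$ with constant $C E_0$.
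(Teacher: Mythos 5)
Your proposal does not prove the statement in question. The statement is Lemma \ref{jiaohuanzi}, the commutator estimate of Lemma 2.100 in \cite{bcd}: bounds on $R_j=[v\cdot\nabla,\Delta_j]f$ (or $\mathrm{div}[v,\Delta_j]f$) in $\ell^r(L^p)$ in the several regimes for $\sigma$. What you have written instead is an outline of the proof of the main result, Theorem \ref{maintheorem} — Friedrichs regularization, the energy hierarchy, Schonbek's Fourier splitting, uniqueness by Osgood/Gronwall. None of this establishes the commutator bounds; worse, your sketch explicitly \emph{invokes} Lemma \ref{jiaohuanzi} to handle the transport commutators, so as a proof of that lemma it is circular. (For the record, the paper itself gives no proof either: it quotes the lemma verbatim from \cite{bcd}, so the only faithful ``proof'' here is the citation or a reproduction of the argument there.)

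A genuine proof runs along entirely different lines and is purely harmonic-analytic, with no PDE content. One decomposes $v\cdot\nabla f$ by Bony's paraproduct, $v^k\partial_k f=T_{v^k}\partial_k f+T_{\partial_k f}v^k+\partial_k R(v^k,f)$ (the divergence-free assumption letting the derivative fall outside the remainder, which is what relaxes $\sigma>-d\min\{1/p_1,1/p'\}$ to $\sigma>-1-d\min\{1/p_1,1/p'\}$), so that $R_j$ splits into the true commutator $[T_{v^k},\Delta_j]\partial_k f$ plus paraproduct and remainder pieces. The true commutator is the only term requiring the gain of one derivative on $v$: writing $[S_{j'-1}v^k,\Delta_j]\partial_k\Delta_{j'}f$ as a convolution against the kernel $2^{jd}h(2^j\cdot)$ and using a first-order Taylor expansion (equivalently, the bound $\|\,|y|\,2^{jd}h(2^jy)\|_{L^1_y}\lesssim 2^{-j}$) trades a factor $2^{-j}$ for $\|\nabla S_{j'-1}v\|_{L^\infty}$ or, via Bernstein (Lemma \ref{bernstein}), for $\|\nabla v\|_{B^{d/p_1}_{p_1,\infty}}$. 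The remaining pieces are estimated by the standard paraproduct/remainder continuity results, and it is precisely the summability of the resulting geometric series over frequency interactions that produces the thresholds in the statement: $\sigma<1+d/p_1$ for the high-to-low interactions in $T_{\partial_k f}v^k$, $\sigma>0$ (resp.\ $\sigma>-1$) with the extra term $\|\nabla v\|_{B^{\sigma-1}_{p_1,r}}\|\nabla f\|_{L^{p_2}}$ when one instead places $f$'s derivative in $L^{p_2}$ by H\"older with $1/p_2=1/p-1/p_1$, the regime $\sigma>1+d/p_1$ (or equality with $r=1$) where $\nabla v\in B^{\sigma-1}_{p_1,r}$ dominates both contributions, and the limit case $\sigma=-\min(d/p_1,d/p')$ where the series only sums in $\ell^\infty$ against $\|\nabla v\|_{B^{d/p_1}_{p_1,1}}$. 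Your proposal contains none of these ingredients, so the gap is total: to repair it you should either cite \cite{bcd} as the paper does, or carry out the Bony decomposition and kernel estimate just described.
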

We will also use the following  Osgood's Lemma:
\begin{lemma}\label{osgood}(see \cite{bcd})
 Let $g\ge 0$ be a measurable function, $\gamma$ be a locally integrable function and $\Lambda$ be a positive, continuous and nondecreasing function. $a$ be a positive real number and assume that $g$ satisfy the inequality
 $$g(t)\le a+\int_{t_0}^t \gamma(s)\Lambda(g(s))ds.$$
 If $a>0$, then we have
 \begin{align*}%\label{}
-\Omega(g(t))+\Omega(a)\le\int_{t_0}^t \gamma(s)ds,
\end{align*}
 where
 \begin{align*}%\label{}
\Omega(x)=\int_x^1\frac{dr}{\Lambda(r)}.
\end{align*}
 If $a=0$ and $\Lambda$ satisfies
 $$\int_0^1\frac{dr}{\Lambda(r)}=+\infty,$$
 then the function $g\equiv0$.
\end{lemma}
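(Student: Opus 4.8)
The plan is to reduce the differential inequality for the merely measurable function $g$ to an ordinary differential inequality for a regular majorant, and then to integrate it using $\Omega$ as an explicit antiderivative. First I would promote the right-hand side to a function in its own right, setting $R(t)=a+\int_{t_0}^t\gamma(s)\Lambda(g(s))\,ds$, so that the hypothesis reads $g(t)\le R(t)$. Since $\gamma\ge 0$ and $\Lambda>0$, the integrand is nonnegative, so $R$ is nondecreasing and absolutely continuous with $R(t_0)=a$. The decisive structural point is that when $a>0$ we have $R(t)\ge a>0$ for all $t\ge t_0$, which keeps the trajectory away from any singularity of $1/\Lambda$ at the origin and guarantees $\Omega(R(t))$ is finite.

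Next I would differentiate. For almost every $t$ one has $R'(t)=\gamma(t)\Lambda(g(t))$, and because $\Lambda$ is nondecreasing and $g(t)\le R(t)$, this gives $R'(t)\le\gamma(t)\Lambda(R(t))$, i.e. $R'(t)/\Lambda(R(t))\le\gamma(t)$. Noting that $\Omega'(x)=-1/\Lambda(x)$ and that $\Omega$ is strictly decreasing, I would apply the chain rule to the absolutely continuous composition $\Omega\circ R$ (legitimate because $\Omega$ is $C^1$ with bounded derivative on the compact range of $R$) to get $\frac{d}{dt}\Omega(R(t))=-R'(t)/\Lambda(R(t))\ge-\gamma(t)$. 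Integrating from $t_0$ to $t$ yields $\Omega(R(t))-\Omega(a)\ge-\int_{t_0}^t\gamma(s)\,ds$, that is $-\Omega(R(t))+\Omega(a)\le\int_{t_0}^t\gamma(s)\,ds$. Finally, since $\Omega$ is decreasing and $g(t)\le R(t)$ we have $\Omega(g(t))\ge\Omega(R(t))$, so replacing $R(t)$ by $g(t)$ only reinforces the inequality and delivers the claimed bound $-\Omega(g(t))+\Omega(a)\le\int_{t_0}^t\gamma(s)\,ds$.

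For the degenerate case $a=0$ I would argue by approximation rather than repeating the computation. For every $\e>0$ the hypothesis with $a=0$ trivially implies $g(t)\le\e+\int_{t_0}^t\gamma(s)\Lambda(g(s))\,ds$, so the case already established applies with $a=\e$, giving $-\Omega(g(t))+\Omega(\e)\le\int_{t_0}^t\gamma(s)\,ds$. Suppose toward a contradiction that $g(t_1)>0$ at some $t_1$; then $\Omega(g(t_1))$ is finite, whereas the assumption $\int_0^1 dr/\Lambda(r)=+\infty$ forces $\Omega(\e)=\int_\e^1 dr/\Lambda(r)\to+\infty$ as $\e\to 0^+$. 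Letting $\e\to 0^+$, the left-hand side blows up while the right-hand side stays bounded by $\int_{t_0}^{t_1}\gamma(s)\,ds$, a contradiction; hence $g(t)=0$ for every $t$, i.e. $g\equiv 0$.

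The main subtlety, and the reason one cannot simply invoke Gronwall, is regularity: since $g$ is only measurable I must never differentiate $g$ itself but work throughout with the absolutely continuous majorant $R$, and I must check that the chain rule for $\Omega\circ R$ holds almost everywhere. The second delicate point is the positivity $R\ge a>0$, which is exactly what licenses the division by $\Lambda(R)$ and the finiteness of $\Omega$ along the trajectory; its failure when $a=0$ is precisely what forces the separate limiting argument together with the divergence condition on $\int_0^1 dr/\Lambda$.
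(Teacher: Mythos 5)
Your proof is correct and is essentially the canonical argument for Osgood's lemma in the cited reference \cite{bcd}, which the paper quotes without proof: introduce the absolutely continuous majorant $R(t)=a+\int_{t_0}^t\gamma(s)\Lambda(g(s))\,ds$, bound $R'\le\gamma\Lambda(R)$ via the monotonicity of $\Lambda$, integrate using $\Omega'=-1/\Lambda$ along the trajectory (kept away from $0$ by $R\ge a>0$), and settle $a=0$ by taking $a=\varepsilon\to 0^+$ against the divergence of $\int_0^1 dr/\Lambda(r)$. The only point worth flagging is that you correctly invoke $\gamma\ge 0$, a hypothesis the paper's statement omits but which belongs to the standard formulation and is needed for $R$ to be nondecreasing and for the final inequality to be meaningful.
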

Finally, we give
the $L^p$ estimate for the transport (-diffusion) equation .
\begin{lemma}\label{shuyun}(see \cite{cc})
Let $u$ be a smooth divergence-free vector field in $\R^d (d
\ge 2)$ and $\ta$ be a smooth solution of the following
transport (-diffusion) equation
\begin{align*}
\partial_t\theta+u\cdot\nabla\theta+\kappa |D|^\alpha \theta =f, \quad\diverg u=0,\quad \ta_{t=0}\big|=\ta_0,\quad \alpha\in(0,2),
\end{align*}
with $\kappa \ge 0$. Then  for any  $t\in \R^+$ and $1\le p\le \infty,$ there holds:
$$\|\ta\|_{L^p}\le\|\ta_0\|_{L^p}+\int_0^t\|f\|_{L^p}\,d\tau.$$
\end{lemma}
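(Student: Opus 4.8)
The plan is to prove the inequality by the classical $L^p$ energy method, exploiting two structural features of the equation: the transport term $u\cdot\na\ta$ carries no $L^p$ energy because $u$ is divergence-free, and the fractional dissipation $\kappa|D|^\alpha\ta$ is nonnegative in the $L^p$ duality pairing. I would first establish the estimate for $1<p<\infty$, and then recover the endpoints $p=1$ and $p=\infty$ by convex approximation and by passing to the limit $p\to\infty$, respectively. Note that $\kappa\ge0$ is allowed (including the pure transport case $\kappa=0$), and the argument will simply discard the nonnegative dissipation contribution.

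For $1<p<\infty$, I would multiply the equation by $|\ta|^{p-2}\ta$ and integrate over $\R^d$, obtaining
\begin{align*}
\frac{1}{p}\frac{d}{dt}\|\ta\|_{L^p}^p+\int_{\R^d}(u\cdot\na\ta)|\ta|^{p-2}\ta\,dx+\kappa\int_{\R^d}(|D|^\alpha\ta)|\ta|^{p-2}\ta\,dx=\int_{\R^d}f|\ta|^{p-2}\ta\,dx.
\end{align*}
The convection term vanishes: writing $(u\cdot\na\ta)|\ta|^{p-2}\ta=\frac1p\,u\cdot\na(|\ta|^p)$ and integrating by parts, it equals $-\frac1p\int_{\R^d}(\diverg u)|\ta|^p\,dx=0$. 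The forcing term is controlled by H\"older's inequality as $\|f\|_{L^p}\|\ta\|_{L^p}^{p-1}$.

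The crucial step, which I expect to be the main obstacle, is the nonnegativity of the dissipation term, namely $\int_{\R^d}(|D|^\alpha\ta)|\ta|^{p-2}\ta\,dx\ge0$ for $0<\alpha<2$. Here I would invoke the pointwise convexity inequality of C\'ordoba--C\'ordoba: for any convex $\Phi\in C^2$ with $\Phi(0)=0$ one has
\begin{align*}
\Phi'(\ta)\,|D|^\alpha\ta\ge|D|^\alpha\big(\Phi(\ta)\big)\quad\text{pointwise}.
\end{align*}
Applying this with $\Phi(r)=|r|^p/p$ gives $|\ta|^{p-2}\ta\,|D|^\alpha\ta\ge\frac1p|D|^\alpha(|\ta|^p)$, and since $\widehat{|D|^\alpha g}(\xi)=|\xi|^\alpha\hat g(\xi)$ vanishes at $\xi=0$ we have $\int_{\R^d}|D|^\alpha(|\ta|^p)\,dx=0$, whence the claimed nonnegativity. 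Dropping this nonnegative term and inserting the H\"older bound leaves $\frac1p\frac{d}{dt}\|\ta\|_{L^p}^p\le\|f\|_{L^p}\|\ta\|_{L^p}^{p-1}$, that is $\frac{d}{dt}\|\ta\|_{L^p}\le\|f\|_{L^p}$, and integrating in time produces the stated bound.

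Finally, for the endpoints I would argue by limiting procedures. When $p=1$ the multiplier $|\ta|^{p-2}\ta$ degenerates to $\mathrm{sgn}(\ta)$, so I would regularize with the convex functions $\Phi_\e(r)=\sqrt{r^2+\e^2}-\e$, which satisfy $\Phi_\e(0)=0$ and to which the C\'ordoba--C\'ordoba inequality applies; repeating the above computation with $\Phi_\e'(\ta)$ in place of $|\ta|^{p-2}\ta$ and letting $\e\to0$ recovers the $p=1$ estimate. For $p=\infty$ I would pass to the limit $p\to\infty$ in the just-proved $L^p$ inequality, using that $\|g\|_{L^p}\to\|g\|_{L^\infty}$ for $g$ lying in all $L^p$ spaces; alternatively one may give a direct maximum-principle argument based on the fact that $|D|^\alpha\ta(x_0)\ge0$ at any point $x_0$ where $\ta$ attains its spatial maximum.
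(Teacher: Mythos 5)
Your proposal is correct and coincides with the argument the paper relies on: the lemma is quoted from C\'ordoba--C\'ordoba \cite{cc}, whose proof is exactly your $L^p$ energy method combined with the pointwise convexity inequality $\Phi'(\ta)\,|D|^\alpha\ta\ge|D|^\alpha(\Phi(\ta))$ to discard the dissipation, with the endpoints $p=1,\infty$ handled by the same regularization and limiting procedures you describe. No gaps; this is the standard and intended proof.
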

\section{\Large\bf The key a priori estimates}
In this section, we will use several steps to give the  key a priori estimates. Firstly, we present the basic energy estimate for $\ta$ and $u.$ Secondly, we give the
 derivative and improved derivative energy estimates for $\ta$ and $u$ respectively. In the last step, we get
$\|u\|_{L^1_t(\dot B^{1}_{\infty,1})}$ and $\|\ta\|_{L^1_t(B_{p,\infty}^{3\alpha/2})}$.

\subsection{ \large\bf The basic energy estimate for $\ta$ and $u$}
In order to explain the index we will be used more essentially
in the following, we will generalize our's argument to a $d $ dimension. More precisely, we get the following {proposition}:

\begin{proposition}\label{shuaijian}
Let $(\ta,u)$ be a smooth enough solution of the system \eqref{m} on $[0,T^*).$ Assume $\ta_0\in (L^q\cap L^2\cap
\dot{H}^{-s_0})(\R^d) $ and $u_0\in
L^2(\R^d)$.
For any $0<\alpha\le 1$, $s_0\in	( {\alpha d}/{q}+ { (d+2)}/{2}- { \alpha(d+4)}/{2}, {{2\alpha d}/{q}-\alpha d-6\alpha+3+ {3d}/{2}})$
for some $q\in( {2\alpha d}/{(6\alpha+\alpha d-d-2)}, 2)$, then there holds
\begin{align}\label{chujiao}
\|\ta(t)\|_{L^2}\lesssim
{E}_0\w{t}^{-{s_0}/{\alpha}},\quad for \quad \forall t<T^\ast.
\end{align}
Especially, If $d=2, $  for any $2/3<\alpha\le 1$,  $s_0\in	(\alpha, {4\alpha }/{q}-8\alpha +6)$, $ {\alpha }/{(2\alpha-1)}<q<\min\big\{2, {4\alpha}/{(3(3\alpha-2))}\big\} $,
there hold
\eqref{chujiao}
and
\begin{align*}%\label{xiaoping28}
&\|u\|_{L^\infty_t(L^2)}+
\|\ta\|_{L^\infty_t(L^2)}+\|\nabla u\|_{L^2_t(L^2)}
+\| \ta\|_{L^2_t(\dot{H}^{\alpha/2})}
\le CE_0.
\end{align*}%
\end{proposition}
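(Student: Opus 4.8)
The plan is to prove the two assertions together through a single continuity (bootstrap) argument on $[0,T^*)$, in which the polynomial decay of $\|\ta\|_{L^2}$ supplies the time-integrability needed to bound $u$ uniformly, while the resulting bounds on $u$ are precisely what allow one to control the negative homogeneous norm of $\ta$ that drives the decay. I would begin with the three elementary balances. Lemma \ref{shuyun} applied with $f=0$ gives the conserved bounds $\|\ta(t)\|_{L^q}\le\|\ta_0\|_{L^q}$ and $\|\ta(t)\|_{L^2}\le\|\ta_0\|_{L^2}$. Testing the temperature equation by $\ta$ and using $\diverg u=0$ to annihilate the transport term yields
\beq
\tfrac12\tfrac{d}{dt}\|\ta\|_{L^2}^2+\|\ta\|_{\dot{H}^{\alpha/2}}^2=0,
\eeq
hence immediately $\|\ta\|_{L^2_t(\dot{H}^{\alpha/2})}^2\le\tfrac12\|\ta_0\|_{L^2}^2$. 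Testing the momentum equation by $u$, using $\mu(\ta)\ge1$ together with the identity $\int|d(u)|^2\,dx=\tfrac12\int|\na u|^2\,dx$ valid for divergence-free fields, and estimating the buoyancy term by $\|\ta\|_{L^2}\|u\|_{L^2}$, I obtain
\beq
\tfrac12\tfrac{d}{dt}\|u\|_{L^2}^2+\|\na u\|_{L^2}^2\le\|\ta\|_{L^2}\|u\|_{L^2}.
\eeq

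The heart of the matter is the estimate in $\dot{H}^{-s_0}$. Pairing the temperature equation with $\ta$ in the $\dot{H}^{-s_0}$ inner product and writing $u\cdot\na\ta=\diverg(u\ta)$ gives
\beq
\tfrac12\tfrac{d}{dt}\|\ta\|_{\dot{H}^{-s_0}}^2+\|\ta\|_{\dot{H}^{\alpha/2-s_0}}^2=-\big\langle \diverg(u\ta),\ta\big\rangle_{\dot{H}^{-s_0}}.
\eeq
Splitting the weight as $|\xi|^{-2s_0}=|\xi|^{\alpha/2-s_0}|\xi|^{-\alpha/2-s_0}$, the right-hand side is bounded by $\|u\ta\|_{\dot{H}^{1-\alpha/2-s_0}}\|\ta\|_{\dot{H}^{\alpha/2-s_0}}$, and Young's inequality absorbs the second factor into the dissipation, leaving $\int_0^t\|u\ta\|_{\dot{H}^{1-\alpha/2-s_0}}^2\,ds$ to be controlled. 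This product estimate in a negative Sobolev space is the main obstacle. I would treat it with the two-dimensional Sobolev product law, distributing regularity so that the conserved integrability of $\ta$ (through the embedding $L^q\hookrightarrow\dot{H}^{-(2/q-1)}$) sits on one factor while $u$ is interpolated via $\|u\|_{\dot{H}^{a}}\lesssim\|u\|_{L^2}^{1-a}\|\na u\|_{L^2}^{a}$, and so that the remaining time integral closes against $\int_0^t\|\na u\|_{L^2}^2\,ds$, the uniform norm $\|u\|_{L^\infty_t(L^2)}$, and the decaying factor $\|\ta\|_{L^2}$ itself. Admissibility of the resulting exponents — both Sobolev indices below $1$ with positive sum, and the time exponents integrable after Hölder in $t$ — is exactly what dictates the ranges $2/3<\alpha\le1$, $\alpha/(2\alpha-1)<q<\min\{2,4\alpha/(3(3\alpha-2))\}$ and $s_0<4\alpha/q-8\alpha+6$.

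Granting from this step the uniform bound $\|\ta\|_{L^\infty_t(\dot{H}^{-s_0})}\lesssim E_0$, the decay is a short interpolation–ODE argument. Since
\beq
\|\ta\|_{L^2}\le\|\ta\|_{\dot{H}^{-s_0}}^{\frac{\alpha/2}{s_0+\alpha/2}}\,\|\ta\|_{\dot{H}^{\alpha/2}}^{\frac{s_0}{s_0+\alpha/2}},
\eeq
substituting into the $L^2$ balance turns it into $y'\le-c\,E_0^{-\alpha/s_0}\,y^{\,1+\alpha/(2s_0)}$ for $y=\|\ta\|_{L^2}^2$; integrating gives $\|\ta(t)\|_{L^2}\lesssim E_0\,t^{-s_0/\alpha}$, and combining with the uniform small-time bound $\|\ta\|_{L^2}\le\|\ta_0\|_{L^2}$ produces the stated $\|\ta(t)\|_{L^2}\lesssim E_0\w{t}^{-s_0/\alpha}$, i.e.\ \eqref{chujiao}.

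Finally I would close the continuity argument. The lower restriction $s_0>\alpha$ makes the decay time-integrable, so $\int_0^\infty\|\ta\|_{L^2}\,dt\lesssim E_0$; feeding this into the velocity balance gives $\|u(t)\|_{L^2}\le\|u_0\|_{L^2}+\int_0^t\|\ta\|_{L^2}\,ds\lesssim E_0$, and then integrating in time yields $\|\na u\|_{L^2_t(L^2)}^2\lesssim\|u_0\|_{L^2}^2+\|u\|_{L^\infty_t(L^2)}E_0\lesssim E_0^2$, while $\|\ta\|_{L^2_t(\dot{H}^{\alpha/2})}$ is already bounded by the first energy identity. These improve the a priori constants assumed on the bootstrap interval, so the interval cannot be strict, the bounds hold on all of $[0,T^*)$, and all four quantities in the last display of the statement are controlled by $CE_0$. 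The $d$-dimensional estimate follows the same scheme, the index conditions there being the $d$-dimensional analogues of the admissibility constraints encountered in the product estimate.
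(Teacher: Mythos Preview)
Your scheme is the Wiegner-type alternative to Schonbek's Fourier splitting: propagate $\|\ta\|_{\dot H^{-s_0}}$, then feed the resulting uniform bound into the interpolation inequality $\|\ta\|_{L^2}\le\|\ta\|_{\dot H^{-s_0}}^{\frac{\alpha/2}{s_0+\alpha/2}}\|\ta\|_{\dot H^{\alpha/2}}^{\frac{s_0}{s_0+\alpha/2}}$ to turn the $L^2$ balance into a decaying ODE. The ODE step is correct, but the propagation step does not close as a bootstrap for large data, and the proposition carries no smallness assumption.

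Concretely, the only quantities you can bound \emph{a priori} (i.e.\ without the bootstrap hypothesis) are $\|\ta\|_{L^\infty_t(L^q)}$, $\|\ta\|_{L^\infty_t(L^2)}$ and $\|\ta\|_{L^2_t(\dot H^{\alpha/2})}$; there is no global bound on $\|u\|_{L^\infty_t(L^2)}$ or $\|\na u\|_{L^2_t(L^2)}$ without first integrating $\|\ta\|_{L^2}$ in time, which requires the decay. When you then estimate $\int_0^t\|u\ta\|_{\dot H^{1-\alpha/2-s_0}}^2\,ds$ via any Sobolev product law and interpolation of $u$ between $L^2$ and $\dot H^1$, each factor carries at least one power of the bootstrap constant: the output is (schematically) $\|\ta_0\|_{\dot H^{-s_0}}^2 + C M^4$ for a bootstrap bound $M$ on the decay amplitude, the $u$-norms, and $\|\ta\|_{\dot H^{-s_0}}$. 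The improved decay constant is then $\sim\|\ta_0\|_{\dot H^{-s_0}} + CM^2$, and the continuity argument requires this to drop below $M$, which forces $M$ (hence the data) to be small. Your assertion that ``admissibility of the resulting exponents \dots\ is exactly what dictates the ranges'' is also left unverified; those ranges in the statement are artifacts of a different mechanism and there is no reason they should coincide with the constraints coming from your product estimate.

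The paper avoids the circularity altogether. It first extracts from the velocity balance a \emph{time-growing} bound $\|u\|_{L^\infty_t(L^2)}+\|\na u\|_{L^2_t(L^2)}\lesssim\|u_0\|_{L^2}+\|\ta_0\|_{L^q}\,t^{\frac{(d+4)q-2d}{4q}}$, using only the conserved $L^q$ norm of $\ta$ and an Osgood argument. It then runs Schonbek's Fourier splitting on the $\ta$-equation: with $S(t)=\{|\xi|\le g(t)\}$, $g(t)\sim\langle t\rangle^{-1/\alpha}$, the low-frequency mass $\int_{S(t)}|\widehat\ta|^2$ is controlled through the Duhamel formula by $\|\ta_0\|_{\dot H^{-s_0}}$ and by $\|u\ta\|_{L^1_t(L^1)}\le\|\ta_0\|_{L^q}\|u\|_{L^1_t(L^{q'})}$, the latter handled with the polynomially growing $u$-bound. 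This yields a first (weaker) decay rate with no bootstrap; that rate is then recycled once to sharpen the $u$-bounds, and a second pass of Fourier splitting produces $\|\ta(t)\|_{L^2}\lesssim E_0\langle t\rangle^{-s_0/\alpha}$. The stated ranges on $q$ and $s_0$ arise precisely from balancing the powers of $t$ in these two passes. If you want to salvage your strategy, you would need to replace the single bootstrap by such a two-step argument in which the first pass uses only conserved or time-polynomial bounds.
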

\begin{proof}
 The key part to prove this {proposition} is to derive
the decay of $\|\ta(t)\|_{L^2}$. We will follow Schonbek's strategy in \cite{Schonbek} (or Proposition 4.1 in \cite{zhangping2017}) to obtain this decay.

On one hand,
we get by taking a standard $L^2$  energy estimates to the  $u$ equation of \eqref{m}  that
\begin{align}\label{xiaoping1}
\frac12\frac{d}{dt}\|u(t)\|_{L^2}^2+\int_{\R^d}\mu(\ta)d(u):d(u)\,dx=
\int_{\R^d}\ta u_d\,dx.
\end{align}%
Thanks to the H$\mathrm{\ddot{o}}$lder inequality, interpolation inequality and Young inequality, we infer from
\eqref{mutiaojian} and \eqref{xiaoping1} that
\begin{align*}%\label{xiaoping3}
\frac12\frac{d}{dt}\|u(t)\|_{L^2}^2+\|\nabla u(t)\|_{L^2}^2
\lesssim&\|\ta\|_{L^q}\|u\|_{L^{\frac{q}{q-1}}}\nonumber\\
\lesssim&\|\ta\|_{L^q}\|u\|_{L^2}^{1-(\frac{1}{q}-\frac{1}{2})d}\|\nabla u\|_{L^2}^{(\frac{1}{q}-\frac{1}{2})d}\nonumber\\
\lesssim&\|\ta\|_{L^q}^{\frac{4q}{(4+d)q-2d}}
\|u\|_{L^2}^{\frac{2((d+2)q-2d)}{(d+4)q-2d}}+\frac12\|\nabla u\|_{L^2}^2.
\end{align*}%
Applying Osgood's Lemma \ref{osgood} to the above inequality gives
\begin{align*}%\label{xiaoping5}
&\|u(t)\|_{L^2}^2+\|\nabla u(t)\|_{L^2_t(L^2)}^2\nonumber\\
&\quad\lesssim\|u_0\|_{L^2}^2
+\Big(\int_0^t\|\ta(t')\|_{L^q}^{\frac{4q}{(d+4)q-2d}}  \,dt'\Big)^{\frac{(d+4)q-2d}{8q}}
\end{align*}
which implies
\begin{align*}%\label{xiaoping6}
&\|u\|_{L^\infty_t(L^2)}^2+\|\nabla u\|_{L^2_t(L^2)}^2
\lesssim\|u_0\|_{L^2}^2+
\|\ta\|_{L_t^{\frac{4q}{(d+4)q-2d}} (L^q)}^2.
\end{align*}
Then by virtue of Lemma \ref{shuyun}, we  have
\begin{align}\label{xiaoping7}
&\|u\|_{L^\infty_t(L^2)}+\|\nabla u\|_{L^2_t(L^2)}
\lesssim\|u_0\|_{L^2}+
\|\ta_0\|_{L^q}t^{\frac{(d+4)q-2d}{4q}} .
\end{align}

On the other hand, we get, by taking $L^2$ inner product of the temperature equation in \eqref{m} with $ \ta$, that
\begin{align}\label{xiaoping8}
\frac12\frac{d}{dt}\|\ta(t)\|_{L^2}^2+\|\ta(t)\|_{\dot{H}^{\alpha/2}}^2=0.
\end{align}

Motivated by Schonbek's strategy for the classical Navier-Stokes system in \cite{Schonbek}   (see also \cite{zhangping2017}), we split the phase-space $\R^d$ into two time-dependent regions $S(t)\triangleq \{\xi\in\R^d,|\xi| \le g(t)\}$ and $S^c(t)$, the complement of the set $S(t)$ in $\R^d$, for some $g(t) \thicksim \w{t}^{-1/\alpha}$
to be determined hereafter.
 A simple computation can help us  get
 from \eqref{xiaoping8}  that
\begin{align}\label{xiaoping9}
\frac{d}{dt}\|\ta(t)\|_{L^2}^2+2(g(t))^\alpha\|\ta(t)\|_{L^2}^2\le 2(g(t))^\alpha\int_{S(t)}|\widehat{\ta}(t,\xi)|^2\,d\xi.
\end{align}
We have to deal with the term on the right hand side of \eqref{xiaoping9}.
According to Duhamel's formula, one can deduce from the first equation of \eqref{m}  that
\begin{align}\label{xiaoping11}
\widehat{\ta}(t,\xi)
=e^{-t|\xi|^\alpha}\widehat{\ta}_0(\xi)-\int_0^te^{-(t-t')|\xi|^\alpha} \xi\cdot\mathcal{F}_x(\ta u)(t',\xi)\,dt'.
\end{align}
As $\ta_0\in \dot{H}^{-s_0}(\R^d) $, we have
\begin{align}\label{xiaoping11+9866}
\int_{S(t)}|e^{-t|\xi|^\alpha}\widehat{\ta}_0(\xi)|^2\,d\xi\lesssim
{(g(t))}^{2s_0}\|\ta_0\|_{\dot{H}^{-s_0}}^2.
\end{align}
On one hand,
 it follows from Young's inequality that
\begin{align}\label{xiaoping12}
\int_{S(t)} |\int_0^te^{-(t-t')|\xi|^\alpha} \xi\cdot\mathcal{F}_x(\ta u)(t',\xi)\,dt'|^2\,d\xi
\lesssim& (g(t))^{d+2}\Big\|\int_0^te^{-(t-t')|\xi|^\alpha} \mathcal{F}_x(\ta u)(t',\xi)\,dt'\Big\|_{L^\infty}^2\nonumber\\
\lesssim& (g(t))^{d+2}\left(\int_0^t\|\ta u\|_{L^1}\,dt'\right)^2\nonumber\\
\lesssim& (g(t))^{d+2}\|\ta\|_{L^\infty_t(L^q)}^2\|u\|_{L^1_t(L^{\frac{q}{q-1}})}^2.
\end{align}
On the other hand, we can infer
 from \eqref{xiaoping7}  that
\begin{align}\label{xiaoping13}
\|u\|_{L^1_t(L^{\frac{q}{q-1}})}
\lesssim&\w{t}^{\frac{(d+4)q-2d}{4q}}
\|u\|_{L^{\frac{4q}{(2-q)d}}_t(L^{\frac{q}{q-1}})}\nonumber\\
\lesssim&\w{t}^{\frac{(d+4)q-2d}{4q}}
\|u\|_{L_t^\infty(L^2)}^{\frac{(d+2)q-2d}{2q}}\|\nabla u\|_{L_t^2(L^2)}^{\frac{(2-q)d}{2q}}
\lesssim\w{t}^{\frac{(d+4)q-2d}{2q}}(\|u_0\|_{L^2}+\|\ta_0\|_{L^q}).
\end{align}
Plugging the estimate \eqref{xiaoping13} into \eqref{xiaoping12} gives
\begin{align*}%\label{xiaoping15}
\int_{S(t)} |\int_0^te^{-(t-t')|\xi|^\alpha} \xi\cdot\mathcal{F}_x(\ta u)(t',\xi)\,dt'|^2\,d\xi
\lesssim& (g(t))^{d+2}\Big\|\int_0^te^{-(t-t')|\xi|^\alpha} \mathcal{F}_x(\ta u)(t',\xi)\,dt'\Big\|_{L^\infty}^2\nonumber\\
\lesssim& (g(t))^{d+2}\left(\int_0^t\|\ta u\|_{L^1}\,dt'\right)^2\nonumber\\
\lesssim& (g(t))^{d+2}(\|u_0\|_{L^2}+\|\ta_0\|_{L^q})^2
\|\ta_0\|_{L^q}^2\w{t}^{\frac{(d+4)q-2d}{q}},
\end{align*}
from which and estimate \eqref{xiaoping11+9866}, we finally infer that
\begin{align}\label{xiaoping16}
\int_{S(t)}|\widehat{\ta}(t,\xi)|^2\,d\xi\lesssim
(g(t))^{2s_0}\|\ta_0\|_{\dot{H}^{-s_0}}^2
+(g(t))^{d+2}\w{t}^{\frac{(d+4)q-2d}{q}}(\|u_0\|_{L^2}+\|\ta_0\|_{L^q})^2
\|\ta_0\|_{L^q}^2.
\end{align}
Inserting the above estimate \eqref{xiaoping16}  into \eqref{xiaoping9}, choosing
 $g(t) \thicksim \w{t}^{-1/\alpha}$  and using the assumption that $s_0\ge  {\alpha d}/{q}+ { (d+2)}/{2}- { \alpha(d+4)}/{2}$, we obtain
\begin{align}\label{xiaoping17}
&\frac{d}{dt}\|\ta(t)\|_{L^2}^2+2(g(t))^\alpha\|\ta(t)\|_{L^2}^2
\nonumber\\
&\quad\le
(g(t))^{\alpha+2s_0}\|\ta_0\|_{\dot{H}^{-s_0}}^2+(g(t))^{\alpha+d+2}\w{t}^{\frac{(d+4)q-2d}{q}}
(\|u_0\|_{L^2}+\|\ta_0\|_{L^q})^2
\|\ta_0\|_{L^q}^2
\nonumber\\
&\quad\le
\w{t}^{-\frac{\alpha+d+2}{\alpha}}\w{t}^{\frac{(d+4)q-2d}{q}}
\mathcal{E}_0^2
\end{align}
with $\mathcal{E}_0\triangleq\|\ta_0\|_{\dot{H}^{-s_0}}+\|\ta_0\|_{L^2}
+(\|u_0\|_{L^2}+\|\ta_0\|_{L^q})
(1+\|\ta_0\|_{L^q})$.

Multiplying by $\exp\big(2\int_0^t(g(t'))^\alpha\,dt'\big)$ on both hand sides of \eqref{xiaoping17} leads to
\begin{align*}%\label{xiaoping18}
\frac{d}{dt}\Big(\|\ta(t)\|_{L^2}^2\exp\big(2\int_0^t(g(t'))^\alpha\,dt'\big)\Big)\le
C\mathcal{E}_0^2\w{t}^{-\frac{\alpha+d+2}{\alpha}+\frac{(d+4)q-2d}{q}}\exp\big(2\int_0^t(g(t'))^\alpha\,dt'\big).
\end{align*}
Let us choose $g(t) =(\beta\w{t})^{-1/\alpha}$
for $\beta> {d+2}/{(2\alpha)}-{((d+4)q-2d)}/{2q}$ in the above inequality to get
$$
\w{t}^{2\beta}\|\ta(t)\|_{L^2}^2\lesssim\|\ta_0\|_{L^2}^2+
\mathcal{E}_0^2\w{t}^{2\beta-\frac{d+2}{\alpha}+\frac{(d+4)q-2d}{q}},$$
which implies for any $t\in(0,T^\ast)$
\begin{align}\label{xiaoping18+98}
\|\ta(t)\|_{L^2}\lesssim
\mathcal{E}_0\w{t}^{-\frac{d+2}{2\alpha}+\frac{(d+4)q-2d}{2q}}.
\end{align}
Combining with estimates
\eqref{xiaoping1} and  \eqref{xiaoping18+98}, we get for any $ {2\alpha d}/{(6\alpha+\alpha d-d-2)}<q<2$ that
\begin{align}\label{xiaoping19}
\|u(t)\|_{L^2}\lesssim&\|u_0\|_{L^2}+
\|\ta\|_{L^1_t(L^2)}
\lesssim\|u_0\|_{L^2}+\mathcal{E}_0\w{t}^{1-\frac{d+2}{2\alpha}+\frac{(d+4)q-2d}{2q}}
\lesssim \mathcal{E}_0\w{t}^{1-\frac{d+2}{2\alpha}+\frac{(d+4)q-2d}{2q}}.
\end{align}
Thanks to \eqref{xiaoping18+98}, \eqref{xiaoping19}, we get, by a similar derivation of \eqref{xiaoping16}, that
\begin{align}\label{xiaoping21}
\int_{S(t)}|\widehat{\ta}(t,\xi)|^2\,d\xi
\lesssim&
\mathcal{E}_0^2\w{t}^{-{2s_0}/{\alpha}}+(g(t))^{d+2}\left(\int_0^t\|u(t')\|_{L^2}
\|\ta(t')\|_{L^2}\,dt'\right)^2\nonumber\\
\lesssim&
\mathcal{E}_0^2\w{t}^{-{2s_0}/{\alpha}}+\mathcal{E}_0^4(g(t))^{d+2}
\left(\int_0^t\w{t'}^{1-\frac{d+2}{\alpha}+\frac{(d+4)q-2d}{q}}\,dt'\right)^2\nonumber\\
\lesssim&
\mathcal{E}_0^2\w{t}^{-{2s_0}/{\alpha}}+\mathcal{E}_0^4\w{t}^{-\frac{d+2}{\alpha}}
\w{t}^{4-\frac{2(d+2)}{\alpha}+\frac{2(d+4)q-4d}{q}}\nonumber\\
\lesssim&
\mathcal{E}_0^2\w{t}^{-{2s_0}/{\alpha}}+\mathcal{E}_0^4\w{t}^{4-\frac{3(d+2)}{\alpha}+\frac{2(d+4)q-4d}{q}}
\nonumber\\\lesssim&\mathcal{E}_0^2(1+\mathcal{E}_0^2)\w{t}^{-{2s_0}/{\alpha}}
\end{align}
in which we have  let $s_0\le{{2\alpha d}/{q}-\alpha d-6\alpha+3+ {3d}/{2}}$.

Inserting the estimate \eqref{xiaoping21} into \eqref{xiaoping9} gives
\begin{align*}%\label{xiaoping22}
\frac{d}{dt}\|\ta(t)\|_{L^2}^2+2(g(t))^{\alpha}\|\ta(t)\|_{L^2}^2
\lesssim\mathcal{E}_0^2(1+\mathcal{E}_0^2)\w{t}^{-1-{2s_0}/{\alpha}}
\triangleq{E}_0^2\w{t}^{-1-{2s_0}/{\alpha}}.
\end{align*}%
Thus taking $g(t) =(\beta \w{t})^{-1/{\alpha}}
$ for $\beta>{s_0}/{\alpha}$ in the above inequality, we get, by using a similar derivation of \eqref{xiaoping18+98}, that
$$
\w{t}^{2\beta}\|\ta(t)\|_{L^2}^2\lesssim\|\ta_0\|_{L^2}^2+
{E}_0^2\w{t}^{2\beta-{2s_0}/{\alpha}}.
$$
Divided this inequality by $\w{t}^{2\beta} $ leads to
$$%\label{xiaoping23}
\|\ta(t)\|_{L^2}\lesssim
{E}_0\w{t}^{-{s_0}/{\alpha}},\quad for \quad \forall t<T^\ast.
$$

If $d=2, $  for any $2/3<\alpha\le 1$,  $\alpha<s_0< {4\alpha }/{q}-8\alpha +6$ for some $q$ satisfies $ {\alpha }/{(2\alpha-1)}<q<\min\big\{2, {4\alpha}/{(3(3\alpha-2))}\big\} $,
 we finally get that
\begin{align*}%\label{xiaoping24}
\|\ta\|_{L^1_t(L^2)}\le CE_0,
\end{align*}%
from which and \eqref{xiaoping1}, \eqref{xiaoping8}  we infer that
\begin{align*}%\label{xiaoping28}
&\|u\|_{L^\infty_t(L^2)}+
\|\ta\|_{L^\infty_t(L^2)}+\|\nabla u\|_{L^2_t(L^2)}
+\| \ta\|_{L^2_t(\dot{H}^{\alpha/2})}
\le\|u_0\|_{L^2}+C\|\ta\|_{L^1_t(L^2)}
\le CE_0.
\end{align*}%
 This completes the proof of  Proposition \ref{shuaijian}.
\end{proof}

\subsection{\large\bf The derivative energy estimates for $\ta$ and $u$}
In this subsection, we will follow the method in \cite{zhangping2017} to get the derivative energy estimates for $\ta$ and $u$ in $\R^2.$ The first important estimate is to get the energy inequality of \eqref{m}. In fact, when $d=2$, under the assumptions of Theorem \ref{maintheorem}, we can deduce from Proposition \ref{shuaijian} that
\begin{align}
&\|\ta(t)\|_{L^2}\le C
{E}_0\w{t}^{-s_0/\alpha},\label{529ee}\\
&\| u(t)\|_{L^2}^2+\| \ta(t)\|_{L^2}^2+
2\int_0^t(\| \na u\|_{L^2}^2+\| \ta\|_{\dot{H}^{\alpha/2}}^2)\,d\tau
\lesssim
{E}_0^2,\label{529e3}
\end{align}
where $E_0$ is given in \eqref{e0}.

%%%%%%%%%%%%%%%%%%%%%%%%%%%%%%%%%%%%%%%%%%%%%%%%%%%%%%%%%%%%%%%%%%%%%%%
%%%%%%%%%%%%%%%%%%%%%%%%%%%%%%%%%%%%%%%%%%%%%%%%%%%%%%%%%%%%%%%%%%%%%%
%%%%%%%%%%%%%%%%%%%%%%%%%%%%%%%%%%%%%%%%%%%%%%%%%%%%%%%%%%%%%%%%%%%%%%%
%%%%%%%%%%%%%%%%%%%%%%%%%%%%%%%%%%%%%%%%%%%%%%%%%%%%%%%%%%%%%%%%%%%%%%
%%%%%%%%%%%%%%%%%%%%%%%%%%%%%%%%%%%%%%%%%%%%%%%%%%%%%%%%%%%%%%%%%%%%%%5
%%%%%%%%%%%%%%%%%%%%%%%%%%%%%%%%%%%%%%%%%%%%%%%%%%%%%%%%%%%%%%%%%%%%%%%
%%%%%%%%%%%%%%%%%%%%%%%%%%%%%%%%%%%%%%%%%%%%%%%%%%%%%%%%%%%%%%%%%%%%%%
%%%%%%%%%%%%%%%%%%%%%%%%%%%%%%%%%%%%%%%%%%%%%%%%%%%%%%%%%%%%%%%%%%%%%%%
%%%%%%%
%%%%%%%%%%%%%%%%%%%%%%%%%%%%%%%%%%%%%%%%%%%%%%%%%%%%%%%%%%%%%%%%%%%%%%%
%%%%%%%%%%%%%%%%%%%%%%%%%%%%%%%%%%%%%%%%%%%%%%%%%%%%%%%%%%%%%%%%%%%%%%
%%%%%%%%%%%%%%%%%%%%%%%%%%%%%%%%%%%%%%%%%%%%%%%%%%%%%%%%%%%%%%%%%%%%%%%
%%%%%%%%%%%%%%%%%%%%%%%%%%%%%%%%%%%%%%%%%%%%%%%%%%%%%%%%%%%%%%%%%%%%%%
%%%%%%%%%%%%%%%%%%%%%%%%%%%%%%%%%%%%%%%%%%%%%%%%%%%%%%%%%%%%%%%%%%%%%%%
%%%%%%%%%%%%%%%%%%%%%%%%%%%%%%%%%%%%%%%%%%%%%%%%%%%%%%%%%%%%%%%%%%%%%%
In the following, we continue to
 prove the $\dot{H}^1$ energy estimates for $u$.
More precisely, we obtain the following proposition:
\begin{proposition}\label{pingping565}
Let $(\ta, u)$ be a smooth enough solution of \eqref{m} on $[0, T^\ast).$ Then under the assumptions of Theorem \ref{maintheorem}, for any $4/\alpha<p<{1}/{C\|\mu(\cdot)-1\|_{L^\infty}}$ and any $t <T^\ast$, we have
\begin{align}\label{529pingping23}
\|\nabla u\|_{L^\infty_t(L^2)}^2+\|\p_t u\|_{L^2_t(L^2)}^2
\le&C(\|\nabla u_0\|_{L^2}^2+\|\ta\|_{L^2_t(L^2)}^2)\nonumber\\
&\times\exp\Big\{C\int_0^t\big((1+\|
u\|_{L^2}^2)\|\nabla u\|_{L^2}^2+\big\||D|^\alpha \ta\big\|_{L^2}^2 \big)\,dt'\Big\}
.
\end{align}

\end{proposition}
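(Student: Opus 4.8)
The plan is to produce the one-derivative energy identity by testing the momentum equation in \eqref{m} against $\p_t u$. Since $\diverg u=0$ forces $\diverg\p_t u=0$, the pressure drops out, and integrating the viscous term by parts (using $\diverg d(v)=\frac12\D v$ for divergence-free $v$, so that $\int 2\mu(\ta)d(u):d(\p_t u)\,dx=\frac{d}{dt}\int\mu(\ta)|d(u)|^2\,dx-\int\p_t\mu(\ta)|d(u)|^2\,dx$) yields
\begin{align*}
\frac{d}{dt}\int_{\R^2}\mu(\ta)|d(u)|^2\,dx+\|\p_t u\|_{L^2}^2
=\int_{\R^2}\p_t\mu(\ta)\,|d(u)|^2\,dx-\int_{\R^2}(u\cdot\nabla u)\cdot\p_t u\,dx+\int_{\R^2}\ta\,e_2\cdot\p_t u\,dx.
\end{align*}
Because $1\le\mu(\ta)\le 1+\e_0$ and $\int|d(u)|^2\,dx=\frac12\|\nabla u\|_{L^2}^2$ for divergence-free fields, the functional $\mathcal{A}(t):=\int_{\R^2}\mu(\ta)|d(u)|^2\,dx$ is equivalent to $\|\nabla u\|_{L^2}^2$; it therefore suffices to bound the three terms on the right so as to reach a Gr\"onwall-type differential inequality for $\mathcal{A}$.

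The buoyancy term is immediate: $\int\ta\,e_2\cdot\p_t u\,dx\le\frac18\|\p_t u\|_{L^2}^2+C\|\ta\|_{L^2}^2$, which produces the $\|\ta\|_{L^2_t(L^2)}^2$ in the statement. For the other two the common mechanism is a two-dimensional Gagliardo--Nirenberg inequality followed by Young's inequality, every factor $\|\nabla^2u\|_{L^2}$ being later absorbed. I would write $\p_t\mu(\ta)=\mu'(\ta)\p_t\ta=-u\cdot\nabla\mu(\ta)-\mu'(\ta)|D|^\alpha\ta$ from the temperature equation; integrating the transport part by parts gives $\int\p_t\mu(\ta)|d(u)|^2=\int\mu(\ta)\,u\cdot\nabla(|d(u)|^2)\,dx-\int\mu'(\ta)|D|^\alpha\ta\,|d(u)|^2\,dx$. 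Using $\|d(u)\|_{L^4}^2\lesssim\|\nabla u\|_{L^2}\|\nabla^2u\|_{L^2}$ and $\|u\|_{L^4}^2\lesssim\|u\|_{L^2}\|\nabla u\|_{L^2}$, Young's inequality turns the first piece into $\eta\|\nabla^2u\|_{L^2}^2+C\|u\|_{L^2}^2\|\nabla u\|_{L^2}^2\cdot\|\nabla u\|_{L^2}^2$ and the second into $\eta\|\nabla^2u\|_{L^2}^2+C\big\||D|^\alpha\ta\big\|_{L^2}^2\|\nabla u\|_{L^2}^2$; likewise the convection term obeys $\int(u\cdot\nabla u)\cdot\p_t u\le\frac18\|\p_t u\|_{L^2}^2+C\|u\|_{L^2}\|\nabla u\|_{L^2}^2\|\nabla^2u\|_{L^2}\le\frac18\|\p_t u\|_{L^2}^2+\eta\|\nabla^2u\|_{L^2}^2+C\|u\|_{L^2}^2\|\nabla u\|_{L^2}^2\cdot\|\nabla u\|_{L^2}^2$. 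Each surviving coefficient is exactly $(1+\|u\|_{L^2}^2)\|\nabla u\|_{L^2}^2+\||D|^\alpha\ta\|_{L^2}^2$ multiplying $\mathcal{A}(t)$, which is the integrand appearing in the exponential of \eqref{529pingping23}.

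It remains to control the spare factors $\eta\|\nabla^2u\|_{L^2}^2$. For this I would recast \eqref{m} as the constant-coefficient Stokes system $-\D u+\nabla\Pi=\ta e_2-\p_t u-u\cdot\nabla u+\diverg\big(2(\mu(\ta)-1)d(u)\big)$ and invoke the $H^2$ regularity of the Stokes operator on $\R^2$. The top-order contribution of the variable viscosity is $(\mu(\ta)-1)\D u$, whose $L^2$ norm is $\le\|\mu(\cdot)-1\|_{L^\infty}\|\D u\|_{L^2}$; under \eqref{tiaojian}, i.e. $C\|\mu(\cdot)-1\|_{L^\infty}<1$, it is absorbed on the left, leaving $\|\nabla^2u\|_{L^2}\lesssim\|\ta\|_{L^2}+\|\p_t u\|_{L^2}+\|u\cdot\nabla u\|_{L^2}+\|\mu'(\ta)\nabla\ta\cdot\nabla u\|_{L^2}$. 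Choosing $\eta$ small, inserting this bound into the $\eta\|\nabla^2u\|_{L^2}^2$ terms, and reclosing the top order through one further Young inequality and the smallness, one reaches
\begin{align*}
\frac{d}{dt}\mathcal{A}(t)+\frac12\|\p_t u\|_{L^2}^2\le C\|\ta\|_{L^2}^2+C\Big[(1+\|u\|_{L^2}^2)\|\nabla u\|_{L^2}^2+\big\||D|^\alpha\ta\big\|_{L^2}^2\Big]\mathcal{A}(t),
\end{align*}
to which Gr\"onwall's lemma applies and, integrating in time, gives \eqref{529pingping23}.

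I expect the \emph{genuinely delicate point} to be the last term $\|\mu'(\ta)\nabla\ta\cdot\nabla u\|_{L^2}$ in the Stokes estimate, rather than the energy bookkeeping. Since $\alpha\le1$ the dissipation controls only $|D|^\alpha\ta$, so $\nabla\ta$ is not available in $L^2$ and, because $\ta\in B_{p,\infty}^{\alpha/2}$ with $\alpha/2<1$, the gradient $\nabla\mu(\ta)$ sits in a space of negative Besov regularity and cannot be placed in $L^p$ directly. This forces one to keep $2(\mu(\ta)-1)d(u)$ in divergence form and to estimate it through the product and composition Lemmas \ref{morser} and \ref{fuhe}, splitting off the top-order part absorbed by smallness and balancing the remaining loss against $\|\mu(\cdot)-1\|_{L^\infty}$. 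It is precisely this balancing that dictates the admissible range $4/\alpha<p<1/(C\|\mu(\cdot)-1\|_{L^\infty})$: the lower bound guarantees the Besov embeddings needed to measure $\mu(\ta)-1$ against the regularity of $\ta$, while the upper bound preserves the smallness required to absorb the top-order contribution.
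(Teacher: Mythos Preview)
Your outline is essentially the paper's own argument: the paper declares that the proof is identical to Lemma~4.3 of \cite{zhangping2017}, the only new ingredient being the renormalized identity $\partial_t\mu(\theta)+u\cdot\nabla\mu(\theta)+\mu'(\theta)|D|^\alpha\theta=0$ for the time derivative of the viscosity, which is precisely the device you invoke to handle $\int\partial_t\mu(\theta)\,|d(u)|^2\,dx$. The energy identity obtained by testing against $\partial_t u$, the Gagliardo--Nirenberg/Young bookkeeping producing the coefficients $(1+\|u\|_{L^2}^2)\|\nabla u\|_{L^2}^2+\||D|^\alpha\theta\|_{L^2}^2$, and the closure via an $L^2$ Stokes estimate under the smallness \eqref{tiaojian} are exactly the mechanism.

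One clarification on your final paragraph: the range $4/\alpha<p<1/(C\|\mu(\cdot)-1\|_{L^\infty})$ in fact plays no role in the derivation of \eqref{529pingping23} itself, which contains no $p$; it appears in the statement only because the proposition is placed under the hypotheses of Theorem~\ref{maintheorem}. The upper bound on $p$ is used immediately \emph{after} this proposition, in the Calder\'on--Zygmund interpolation \eqref{pingping15} for $\|\nabla u\|_{L^p}$, where the operator constant $C_0\sqrt{p}\,\|\mu(\cdot)-1\|_{L^\infty}$ must stay below $1/2$; the lower bound $p>4/\alpha$ is first needed in Proposition~\ref{pingping1}, in the logarithmic interpolation leading to \eqref{529tianjue1+1}. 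So while you are right that the cross term $\nabla\mu(\theta)\cdot d(u)$ in the Stokes estimate is the delicate spot, its resolution is not what dictates the admissible range of $p$.
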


\begin{proof}
This proposition can be obtained  similarly to  Lemma 4.3 in \cite{zhangping2017} and  we only need to make fully use of  the renormalized equation $\partial_t\mu(\theta) + u\cdot\nabla\mu(\theta) +\kappa |D|^\alpha \mu(\theta) =0$.
For simplicity, we omit the details here.
\end{proof}

From \eqref{529ee}, we can easily deduce that
$
\|\ta\|_{L^2_t(L^2)}^2\lesssim  E_0^2,
$
thus, combining with  \eqref{529e3}, in order to get the $\dot{H}^1$ energy estimates for $u$, we have to estimate $\|\ta\|_{{L}^2_t(\dot{H}^{\alpha})}^2$. In fact, we get the following proposition:
\begin{proposition}\label{pingping1}
Let $(\ta, u)$ be a smooth enough solution of \eqref{m} on $[0, T^\ast).$ Then under the assumptions of Theorem \ref{maintheorem}, for any  $t <T^\ast$, we have
\begin{align}\label{529tianjue+00}
\|\ta\|_{\widetilde{L}^\infty_t(\dot{H}^{\alpha/2})}^2+
\|\ta\|_{{L}^2_t(\dot{H}^{\alpha})}^2
\le&\|\ta_0\|_{\dot{H}^{\alpha/2}}^2
+C\|\nabla u\|_{L^2_t(L^2)}^2\Big(1+\|\ta_0\|_{L^\infty}^2+\|\ta_0\|_{L^2}^2
\nonumber\\
&\quad\quad\quad\quad\quad+\|\ta_0 \|_{L^\infty}^2\ln(e+\|\ta_0\|_{B_{p,\infty}^{\alpha/2}}+\|\ta_0\|_{L^\infty}\|\nabla u\|_{L^2_t(L^p)})  \Big).
\end{align}
\end{proposition}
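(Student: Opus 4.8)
The plan is to run an $\dot{H}^{\alpha/2}$ energy estimate on the temperature equation, absorb the fractional dissipation $\|\ta\|_{\dot H^\alpha}^2$ into the left-hand side, and treat the single borderline term by a logarithmic interpolation. First I would apply $\dot\Delta_j$ to the first equation of \eqref{m}, take the $L^2$ inner product with $\dot\Delta_j\ta$, and sum in $j$ with weight $2^{\alpha j}$ (this is what produces the Chemin--Lerner norm $\widetilde{L}^\infty_t(\dot H^{\alpha/2})$ rather than the plain one). Since $\diverg u=0$, the transport contribution $\int u\cdot\na(|D|^{\alpha/2}\ta)\,|D|^{\alpha/2}\ta\,dx$ vanishes, and writing $|D|^\alpha=|D|^{\alpha/2}|D|^{\alpha/2}$ the estimate reduces to controlling
\begin{align*}
\Big|\int_{\R^2}[|D|^{\alpha/2},u\cdot\na]\ta\;|D|^{\alpha/2}\ta\,dx\Big|\le\big\||D|^{-\alpha/2}[|D|^{\alpha/2},u\cdot\na]\ta\big\|_{L^2}\,\|\ta\|_{\dot H^\alpha}.
\end{align*}
Pairing the commutator in $\dot H^{-\alpha/2}$ against $|D|^{\alpha/2}\ta\in\dot H^{\alpha/2}$ is exactly what lets the high factor be $\|\ta\|_{\dot H^\alpha}$, which a Young inequality then absorbs.

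Next I would estimate the commutator $[|D|^{\alpha/2},u\cdot\na]\ta=[|D|^{\alpha/2},u]\cdot\na\ta$ in $\dot H^{-\alpha/2}$ by a Bony-paraproduct analysis. The transport paraproduct $T_u\na\ta$ produces the genuine commutator, which gains one derivative off $u$ and behaves like $\na u\cdot|D|^{\alpha/2}\ta$; measured in $\dot H^{-\alpha/2}$ this costs $\|\na u\|_{L^2}$ times a scaling-zero (critical) norm of $\ta$, morally $\|\ta\|_{\dot B^0_{\infty,1}}$. The remaining paraproduct and the remainder term are lower order and reduce, after using $\diverg u=0$ and Bernstein's Lemma \ref{bernstein}, to $\|\na u\|_{L^2}$ times $\|\ta\|_{L^2}$. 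Altogether this should yield
\begin{align*}
\big\|[|D|^{\alpha/2},u\cdot\na]\ta\big\|_{\dot H^{-\alpha/2}}^2\lesssim\|\na u\|_{L^2}^2\big(\|\ta\|_{L^2}^2+\|\ta\|_{\dot B^0_{\infty,1}}^2\big),
\end{align*}
so that after absorbing $\|\ta\|_{\dot H^\alpha}^2$ one is left, upon integrating in time, with $\|\ta_0\|_{\dot H^{\alpha/2}}^2$ plus $C\|\na u\|_{L^2_t(L^2)}^2$ times a supremum-in-time $\ta$-factor pulled out of the integral.

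The main obstacle, and the source of the logarithm in \eqref{529tianjue+00}, is that the critical norm $\|\ta\|_{\dot B^0_{\infty,1}}$ is \emph{not} controlled by the dissipated regularity $\dot H^\alpha$: this is precisely the supercriticality $\alpha<1$, i.e.\ the failure of the embedding into $L^\infty$ at this level in $\R^2$. To close the gap I would invoke a Brezis--Gallouet--Wainger inequality in its squared form,
\begin{align*}
\|\ta\|_{\dot B^0_{\infty,1}}^2\lesssim\|\ta\|_{L^\infty}^2\Big(1+\ln\big(e+\|\ta\|_{B_{p,\infty}^{\alpha/2}}\big)\Big),
\end{align*}
which is legitimate exactly because the hypothesis $4/\alpha<p$ makes $B_{p,\infty}^{\alpha/2}\hookrightarrow L^\infty$ supercritically in $\R^2$. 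Two inputs then render the right-hand side harmless: the maximum principle (Lemma \ref{shuyun} with $p=\infty$) gives $\|\ta\|_{L^\infty}\le\|\ta_0\|_{L^\infty}$, while the higher Besov norm is controlled through the transport--diffusion structure, applying $\dot\Delta_j$, the $L^p$ bound of Lemma \ref{shuyun} and the commutator estimate Lemma \ref{jiaohuanzi}, giving $\|\ta\|_{\widetilde L^\infty_t(B_{p,\infty}^{\alpha/2})}\lesssim\|\ta_0\|_{B_{p,\infty}^{\alpha/2}}+\|\ta_0\|_{L^\infty}\|\na u\|_{L^2_t(L^p)}$, which is exactly the argument of the logarithm appearing in \eqref{529tianjue+00}. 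The delicate points to verify are the validity of the fractional commutator estimate for $\alpha<1$ across the stated range of $p$, and the check that the logarithmic interpolation consumes only the conserved $L^\infty$ and $L^2$ norms together with the separately controlled $B_{p,\infty}^{\alpha/2}$ norm, and never the dissipated $\dot H^\alpha$ norm, which must remain available for absorption.
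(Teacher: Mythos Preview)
Your overall strategy is correct and coincides with the paper's: an $\dot H^{\alpha/2}$ energy estimate on the $\theta$ equation, Bony decomposition of the transport term, absorption of $\|\theta\|_{\dot H^\alpha}$ via Young, and a logarithmic interpolation to handle the scaling-critical Besov norm of $\theta$, with the argument of the logarithm supplied by the $B^{\alpha/2}_{p,\infty}$ propagation estimate $\|\theta\|_{\widetilde L^\infty_t(B^{\alpha/2}_{p,\infty})}\lesssim\|\theta_0\|_{B^{\alpha/2}_{p,\infty}}+\|\theta_0\|_{L^\infty}\|\nabla u\|_{L^2_t(L^p)}$.

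There is, however, a genuine quantitative gap. The Brezis--Gallouet--Wainger inequality you invoke,
\[
\|\theta\|_{\dot B^0_{\infty,1}}^2\lesssim\|\theta\|_{L^\infty}^2\bigl(1+\ln(e+\|\theta\|_{B^{\alpha/2}_{p,\infty}})\bigr),
\]
is false as stated: splitting $\sum_j\|\dot\Delta_j\theta\|_{L^\infty}$ at level $N$ gives $N\|\theta\|_{L^\infty}$ on the low block (an $\ell^1$ sum of $N$ terms), so after optimizing one only obtains $\|\theta\|_{\dot B^0_{\infty,1}}\lesssim\|\theta\|_{L^2}+\|\theta\|_{L^\infty}\ln(e+\|\theta\|_{B^{\alpha/2}_{p,\infty}})$, hence a $\ln^2$ upon squaring. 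With that, your argument delivers the bound \eqref{529tianjue+00} with $\ln^2$ in place of $\ln$, which is not the stated proposition. The paper avoids this by being sharper in the commutator step: working frequency by frequency and exploiting the $\ell^2$ summation inherent in the $\dot H^{\alpha/2}$ framework, it shows that the critical factor on $\theta$ is only $\|\theta\|_{\dot B^0_{\infty,2}}$ (and $\|\theta\|_{L^\infty}$), not $\|\theta\|_{\dot B^0_{\infty,1}}$. For the $\ell^2$ norm the low block contributes $N^{1/2}\|\theta\|_{L^\infty}$, so the interpolation reads $\|\theta\|_{\dot B^0_{\infty,2}}\lesssim\|\theta\|_{L^2}+\|\theta\|_{L^\infty}\ln^{1/2}(e+\|\theta\|_{B^{\alpha/2}_{p,\infty}})$, and squaring yields the single $\ln$ in \eqref{529tianjue+00}. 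In short: your paraproduct analysis is too coarse when it lands on $\dot B^0_{\infty,1}$; redo it keeping track of the $c_j\in\ell^2$ structure (as in the paper's treatment of $I_1,I_2$) and you will recover $\dot B^0_{\infty,2}$, after which the rest of your outline goes through verbatim.
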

\begin{proof}
We first deduce from the first equation of \eqref{m} and the following commutator's estimate which the proof  can be founded in \cite{hmidi2011}
 $$\|[\Delta_j, u\cdot\nabla]\ta\|_{L^p}\le C \|\nabla u\|_{L^p}\|\ta\|_{B^0_{\infty,\infty}},\quad \forall\  1\le p\le \infty \quad and \quad \forall\   j\ge -1,$$
 that
\begin{align}\label{xinger}
\|\Delta_j\ta(t)\|_{L^p}\le\|\Delta_j\ta_0\|_{L^p}e^{-ct2^{j\alpha}}
+C\int_0^te^{-c(t-t')2^{j\alpha}}\|\nabla u\|_{L^p}\|\ta\|_{L^\infty} \,dt',
\end{align}
from which and Lemma \ref{shuyun}, we have
\begin{align*}%\label{}
\|\Delta_j\ta\|_{L^\infty_t(L^p)}
+2^{{j\alpha}/{2}}\|\Delta_j\ta\|_{L^2_t(L^p)}
\le C(\|\Delta_j\ta_0\|_{L^p}+2^{-{j\alpha}/{2}}
\|\ta_0\|_{L^\infty}\|\nabla u\|_{L^2_t(L^p)}).
\end{align*}
Multiplying by $2^{{j\alpha}/{2}}$ on both hand sides of the above inequality and then taking $supremum$ about $j$ that
\begin{align}\label{529tianjue}
\|\ta\|_{\widetilde{L}^\infty_t(B_{p,\infty}^{\alpha/2})}+
\|\ta\|_{{\widetilde{L}}^2_t(B_{p,\infty}^{\alpha})}\le&
\|\ta_0\|_{B_{p,\infty}^{\alpha/2}}+\|\ta_0\|_{L^\infty}\|\nabla u\|_{L^2_t(L^p)}.
\end{align}

In the following, applying $\dot{\Delta}_j$ to the first equation of \eqref{m} and then taking the $L^2$ inner product of the resulting equation with $\dot{\Delta}_j\ta$ that
\begin{align}\label{simna}
\frac12\frac{d}{dt}\|\dot{\Delta}_j\ta\|_{L^2}^2
+\int_{\R^2}\dot{\Delta}_j(|D|^\alpha \theta)\cdot\dot{\Delta}_j\ta\,dx
=-\int_{\R^2}\dot{\Delta}_j(u\cdot\na \ta)\cdot\dot{\Delta}_j\ta\,dx.
\end{align}
The Bony's decomposition will be applied to estimate the term on the right hand side of \eqref{simna} that
\begin{align}\label{caocao}
u\cdot\na \ta=\dot{T}_u\na \ta+\dot{T}_{\na \ta}u+\dot{R}(u,\na \ta).
\end{align}
By Lemma \ref{bernstein}, we have
\begin{align}\label{simna2}
\|\dot{\Delta}_j(\dot{T}_{\na \ta}u)\|_{L^2}
\lesssim&\sum_{|j-j'|\le4}\|\dot{S}_{j'-1}\na \ta\|_{L^\infty}\|\dot{\Delta}_{j'}u\|_{L^2}\nonumber\\
\lesssim&\sum_{|j-j'|\le4}\|\dot{S}_{j'-1} \ta\|_{L^\infty}\|\dot{\Delta}_{j'}\na u\|_{L^2}
\lesssim c_{j}(t)\| \ta\|_{L^\infty}\|\na u\|_{L^2}.
\end{align}
Similarly, using the fact that $\diverg u=0$ implies
\begin{align}\label{simna3}
\|\dot{\Delta}_j(\dot{R}(u,\na \ta))\|_{L^2}
\lesssim&2^j\|\dot{\Delta}_j(\dot{R}(u, \ta))\|_{L^2}
\lesssim2^j\sum_{j'\ge j-3}\|\dot{\Delta}_{j'}u\|_{L^2}
\|\widetilde{\dot{\Delta}}_{j'} \ta\|_{L^\infty}\nonumber\\
\lesssim&2^j\sum_{j'\ge j-3}c_{j'}(t)2^{-j'}\|\na u\|_{L^2}
\| \ta\|_{L^\infty}
\lesssim c_{j}(t)\|\na u\|_{L^2}
\| \ta\|_{L^\infty}.
\end{align}
The last term in \eqref{caocao} will be estimated through the following commutator's argument:
\begin{align}\label{simna4}
\int_{\R^2}\dot{\Delta}_j(\dot{T}_u\na \ta)\cdot\dot{\Delta}_j\ta\,dx=&\int_{\R^2}\sum_{|j-j'|\le5}(\dot{S}_{j'-1}u-\dot{S}_{j-1}u)\dot{\Delta}_j\dot{\Delta}_{j'}\nabla \ta\,dx\nonumber\\
&+\int_{\R^2}\sum_{|j-j'|\le5}[\dot{\Delta}_j,\dot{S}_{j'-1}u]\dot{\Delta}_{j'}\nabla \ta\cdot\dot{\Delta}_j\ta\,dx\nonumber\\
\triangleq& I_1+I_2.
\end{align}
Thanks to  Lemma \ref{bernstein} and the commutator's estimate in \cite{bcd}, we obtain
\begin{align}\label{simna5}
I_1\lesssim&2^{-j\alpha}c_{j}^2(t)\|\na u\|_{L^2}\|\ta\|_{\dot{H}^{\alpha}}\| \ta\|_{L^\infty}.
\nonumber\\
I_2\lesssim&2^{-j}\sum_{|j-j'|\le5}\|\dot{S}_{j'-1}\na u\|_{L^2}
\| \dot{\Delta}_{j'}\na \ta\|_{L^\infty}\|\dot{\Delta}_{j}\ta\|_{L^2}
\lesssim2^{-j\alpha}c_{j}^2(t)\|\na u\|_{L^2}\|\ta\|_{\dot{H}^{\alpha}}
\| \ta\|_{\dot{B}^0_{\infty,2}}.
\end{align}

Inserting the estimates  about \eqref{simna2}, \eqref{simna3}, \eqref{simna5} into \eqref{simna} and summing up about $j$ give
\begin{align*}%\label{529tianjue+1}
\|\ta\|_{\widetilde{L}^\infty_t(\dot{H}^{\alpha/2})}^2+
\|\ta\|_{{L}^2_t(\dot{H}^{\alpha})}^2
\le&\|\ta_0\|_{\dot{H}^{\alpha/2}}^2
+\int_0^t\|\ta\|_{\dot{H}^{\alpha}}(\| \ta\|_{L^\infty}\|\na u\|_{L^2}+\|\na u\|_{L^2}
\| \ta\|_{\dot{B}^0_{\infty,2}})\,dt'\nonumber\\
\le&\|\ta_0\|_{\dot{H}^{\alpha/2}}^2
+\varepsilon\|\ta\|_{{L}^2_t(\dot{H}^{\alpha})}^2+C(\|\ta\|_{L^\infty_t(L^\infty)}^2
+\|\ta \|_{L^\infty_t(\dot{B}^0_{\infty,2})}^2)\|\nabla u\|_{L^2_t(L^2)}^2.
\end{align*}
Choosing $\varepsilon$ small enough in the above inequality implies
\begin{align}\label{529tianjue+1}
\|\ta\|_{\widetilde{L}^\infty_t(\dot{H}^{\alpha/2})}^2+
\|\ta\|_{{L}^2_t(\dot{H}^{\alpha})}^2
\le&\|\ta_0\|_{\dot{H}^{\alpha/2}}^2
+C(\|\ta\|_{L^\infty_t(L^\infty)}^2
+\|\ta \|_{L^\infty_t(\dot{B}^0_{\infty,2})}^2)\|\nabla u\|_{L^2_t(L^2)}^2.
\end{align}
Note that for any positive integer $N$ and $p>4/\alpha$, we have
\begin{align*}%\label{}
\|\ta \|_{L^\infty_t(\dot{B}^0_{\infty,2})}
\le&\|\ta \|_{L^\infty_t(L^2)}+\Big(\sum_{1\le j\le N}\|\Delta_j\ta\|^2_{L^\infty_t(L^\infty)} \Big)^{1/2}
+\Big(\sum_{ j> N}\|\Delta_j\ta\|^2_{L^\infty_t(L^\infty)} \Big)^{1/2}\nonumber\\
\le&\|\ta_0 \|_{L^2}+\|\ta_0 \|_{L^\infty}N^{1/2}+2^{(2/p-\alpha/2)N}\|\ta\|_{\widetilde{L}^\infty_t(B_{p,\infty}^{\alpha/2})}.
\end{align*}
Choosing $N$ in the above inequality such that
$
2^{(\alpha/2-2/p)N}\sim\|\ta\|_{\widetilde{L}^\infty_t(B_{p,\infty}^{\alpha/2})}
,$
 we  have
\begin{align}\label{529tianjue1+1}
\|\ta \|_{L^\infty_t(\dot{B}^0_{\infty,2})}
\le C\Big(1+\|\ta_0 \|_{L^2}+\|\ta_0 \|_{L^\infty}
\ln^{\frac12}(e+\|\ta\|_{\widetilde{L}^\infty_t(B_{p,\infty}^{\alpha/2})})\Big)
.
\end{align}
Taking estimate \eqref{529tianjue} into the above estimate \eqref{529tianjue1+1} and then inserting the resulting inequality into \eqref{529tianjue+1}  give \eqref{529tianjue+00}.
\end{proof}
Inserting the estimate  about $\|\ta\|_{{L}^2_t(\dot{H}^{\alpha})}^2$ in Proposition \ref{pingping1} into \eqref{529pingping23}, one can deduce from \eqref{529e3} and estimate $\|\ta\|_{L^2_t(L^2)}^2\lesssim  E_0^2$ that
\begin{align}\label{529pingping23+123456}
\|\nabla u\|_{L^\infty_t(L^2)}^2+\|\p_t u\|_{L^2_t(L^2)}^2
\le&C(\|\nabla u_0\|_{L^2}^2+\|\ta\|_{L^2_t(L^2)}^2)\exp\Big\{C(1+\|
u\|_{L^\infty_t(L^2)}^2)\|\nabla u\|_{L^2_t(L^2)}^2 \Big\}\nonumber\\
&\times\exp\Big\{C
\|\ta_0\|_{\dot{H}^{\alpha/2}}^2
+C\|\nabla u\|_{L^2_t(L^2)}^2\Big(1+\|\ta_0\|_{L^\infty}^2+\|\ta_0\|_{L^2}^2
\nonumber\\
&\quad+\|\ta_0 \|_{L^\infty}^2\ln(e+\|\ta_0\|_{B_{p,\infty}^{\alpha/2}}+\|\ta_0\|_{L^\infty}\|\nabla u\|_{L^2_t(L^p)})  \Big)\Big\}\nonumber\\
\le&C(\|\nabla u_0\|_{L^2}^2+E_0^2)\exp\Big\{C(1+E_0^2)E_0^2\Big\}\nonumber\\
&\times\exp\Big\{C
\|\ta_0\|_{\dot{H}^{\alpha/2}}^2
+CE_0^2\Big(1+\|\ta_0\|_{L^\infty}^2+\|\ta_0\|_{L^2}^2\Big) \Big\}
\nonumber\\
&\times(e+\|\ta_0\|_{B_{p,\infty}^{\alpha/2}}+\|\ta_0\|_{L^\infty}\|\nabla u\|_{L^2_t(L^p)})^{C\|\ta_0 \|_{L^\infty}^2\|\nabla u\|_{L^2_t(L^2)}^2}\nonumber\\
\triangleq& CM_0\Big\{e+\|\ta_0\|_{B_{p,\infty}^{\alpha/2}}+\|\ta_0\|_{L^\infty}\|\nabla u\|_{L^2_t(L^p)}\Big\}^{C\|\ta_0 \|_{L^\infty}^2\|\nabla u\|_{L^2_t(L^2)}^2} .
\end{align}
In the following, we have to estimate $\|\nabla u\|_{L^2_t(L^p)}$.

Thanks to the fact
\begin{equation*}%\label{pingping13}
 \na
u=\na(-\D)^{-1}\mathbb{P}\diverg\bigl((\mu(\ta)-1)\na u\bigr)
-\na(-\D)^{-1}\mathbb{P}\diverg\bigl(\mu(\ta)\na u\bigr), \end{equation*}
and the interpolation inequality
\begin{equation*}
\|f\|_{L^{r}(\R^2)}\leq
C\sqrt{r}\|f\|_{L^2(\R^2)}^{2/r}\|\na
f\|_{L^2(\R^2)}^{1-2/r},\qquad 2\leq r<\infty, \end{equation*}
we can  deduce for any $p\in[2,\infty)$ that
$$
\|\nabla u\|_{L^p} \le C_0 \sqrt{p}\|\mu(\ta_0)-1\|_{L^\infty}\|\nabla
u\|_{L^p} +C\sqrt{p}\|\nabla u\|_{L^2}^{{2}/{p}}
\|\mathbb{P}\diverg\bigl(\mu(\ta)\na u\bigr)\|_{L^2}^{1-{2}/{p}}
$$
with $C_0>0$ being a universal constant.

\noindent Using the second equation of \eqref{m} and
taking $\e_0$ sufficiently
small in \eqref{tiaojian}, we obtain for $2\leq p\leq
{1}/({2C_0\|\mu(\ta_0)-1\|_{L^\infty}})$ that \begin{equation}\label{pingping15}
\begin{aligned}
\|\nabla u\|_{L^p} &\le C\sqrt{p}\|\nabla u\|_{L^2}^{{2}/{p}}
\|\partial_tu+u\cdot\nabla u-\ta e_2\|_{L^2}^{1-{2}/{p}}
\\&
\le C\sqrt{p}\|\nabla u\|_{L^2}^{{2}/{p}}
\big(\|\partial_tu\|_{L^2}^{1-{2}/{p}}
+\|u\|_{L^4}^{1-{2}/{p}}\|\nabla u\|_{L^4}^{1-{2}/{p}}+\|\ta\|_{L^2}^{1-{2}/{p}}\big).
\end{aligned}
\end{equation}
%%%%%%%%%%%%%%%%%%%%%%%%%%%%%%%%%%%%%%%%%%%%%%
Especially, taking $p=4$ in the above inequality \eqref{pingping15}, one has
\begin{align}\label{pingping31}
 \|\nabla u\|_{L^2_t(L^4)}
 \leq& C\bigl( \|\nabla
u\|_{L^2_t(L^2)}^{1/2}\|\partial_tu\|_{L^2_t(L^2)}^{1/2}\nonumber\\
&
+\|u\|_{L^\infty_t(L^2)}^{1/2}\|\nabla
u\|_{L^\infty_t(L^2)}^{1/2}\|\nabla
u\|_{L^2_t(L^2)}+\|\nabla
u\|_{L^\infty_t(L^2)}^{1/2}\|\ta\|_{L^2_t(L^2)}^{1/2}\bigr)\nonumber\\
 \leq& C(1+E_0)E_0^{1/2}(1+\|\partial_tu\|_{L^2_t(L^2)}^{1/2}+\|\nabla
u\|_{L^\infty_t(L^2)}^{1/2}),
\end{align}
from which and \eqref{pingping15}, we infer
\begin{align}\label{pingping32}
\|\nabla u\|_{L^2_t(L^p)}
\le& C\sqrt{p}\|\nabla u\|_{L^2_t(L^2)}^{2/p}
\big(\|\partial_tu\|_{L^2_t(L^2)}^{1-2/p}+\|u\|_{L^\infty_t(L^4)}^{1-2/p}\|\nabla u\|_{L^2_t(L^4)}^{1-2/p}+\|\ta\|_{L^2_t(L^2)}^{1-2/p}\big)\nonumber\\
\le& C\sqrt{p}E_0(1+E_0)^{1-2/p}(1+\|\partial_tu\|_{L^2_t(L^2)}
+\|\na u\|_{L^\infty_t(L^2)}).
\end{align}
Substituting the above inequality into \eqref{529pingping23} gives
\begin{align}\label{529pingping23+1}
\|\nabla u\|_{L^\infty_t(L^2)}^2+&\|\p_t u\|_{L^2_t(L^2)}^2
\le CM_0\Big\{e+\|\ta_0\|_{B_{p,\infty}^{\alpha/2}}\nonumber\\
&+C\sqrt{p}E_0(1+E_0)^{1-2/p}\|\ta_0\|_{L^\infty}(1+\|\partial_tu\|_{L^2_t(L^2)}
+\|\na u\|_{L^\infty_t(L^2)})\Big\}^{C\|\ta_0 \|_{L^\infty}^2\|\nabla u\|_{L^2_t(L^2)}^2},
\end{align}
where $M_0$ is defined in \eqref{529pingping23+123456}.

To close the $\dot{H}^1$ energy estimate about $u$, we also follow the method in \cite{zhangping2017} to prove the non-concentration of energy in the time variable. More precisely, we need the following lemma:

\begin{lemma}(see \cite{zhangping2017})\label{jiarui1}
Let $(\ta, u)$ be a smooth enough solution of \eqref{m} on $[0, T^\ast).$ Then under the assumptions of Theorem \ref{maintheorem}, for any  $t <T^\ast$, we have
\begin{align*}%\label{jiarui2}
\| u\|_{\widetilde{L}^\infty_t(L^2)}\le C E_0(1+E_0)
\end{align*}
for $E_0$ given by \eqref{e0}. If moreover, there holds \eqref{tiaojian}, then for any small enough constant $\nu>0$, there exists $\lambda>0$ such that if $0 \le t_1<t_2<T^\ast$ and $t_2-t_1\le\lambda$, there holds
\begin{align*}%\label{jiarui3}
\|\nabla u\|_{L^2([t_1,t_2];L^2)}\le \nu.
\end{align*}
\end{lemma}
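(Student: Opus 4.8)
The plan is to establish the two assertions separately, since the Chemin--Lerner bound and the non-concentration are logically independent: the first is genuinely an upgrade of the elementary energy estimate, while the second is a soft consequence of the basic dissipation bound.

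For the bound $\|u\|_{\widetilde{L}^\infty_t(L^2)}\le CE_0(1+E_0)$, I would run a dyadic energy estimate on the momentum equation. Writing $\mu(\ta)=1+(\mu(\ta)-1)$ and using $\diverg u=0$, I localize the second equation of \eqref{m} with $\dot\Delta_j$ and take the $L^2$ inner product with $\dot\Delta_j u$. The pressure term drops because $\dot\Delta_j u$ is divergence free; the principal part $\diverg(2d(u))=\D u$ produces the good dissipation $\|\na\dot\Delta_j u\|_{L^2}^2$; and the three remaining contributions are handled as follows. (i) In the transport term $\int\dot\Delta_j(u\cdot\na u)\cdot\dot\Delta_j u$ the diagonal piece $\int(u\cdot\na\dot\Delta_j u)\cdot\dot\Delta_j u$ vanishes by incompressibility, and the remainder is a commutator controlled by Lemma \ref{jiaohuanzi}. (ii) The perturbed viscosity term $\int\dot\Delta_j\diverg(2(\mu(\ta)-1)d(u))\cdot\dot\Delta_j u$ is integrated by parts and, thanks to the smallness \eqref{tiaojian}, absorbed into the dissipation. (iii) The buoyancy forcing satisfies $\int\dot\Delta_j\ta\,\dot\Delta_j u_2\le\|\dot\Delta_j\ta\|_{L^2}\|\dot\Delta_j u\|_{L^2}$. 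After integrating in time, taking $L^\infty_t$ and then the $\ell^2$ sum over $j$, the forcing contributes $\|u\|_{\widetilde{L}^\infty_t(L^2)}\|\ta\|_{\widetilde{L}^1_t(L^2)}$ by Cauchy--Schwarz in $j$, and $\|\ta\|_{\widetilde{L}^1_t(L^2)}\le\|\ta\|_{L^1_t(L^2)}\le CE_0$ by \eqref{529ee}; a Young inequality absorbs half of $\|u\|_{\widetilde{L}^\infty_t(L^2)}^2$, while the nonlinear contributions are bounded through the basic quantities $\|u\|_{L^\infty_t(L^2)}$ and $\|\na u\|_{L^2_t(L^2)}\lesssim E_0$ of \eqref{529e3}. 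Collecting everything, together with $\sum_j\|\dot\Delta_j u_0\|_{L^2}^2=\|u_0\|_{L^2}^2\lesssim E_0^2$, yields $\|u\|_{\widetilde{L}^\infty_t(L^2)}\le C(\|u_0\|_{L^2}+E_0+E_0^2)\le CE_0(1+E_0)$.

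For the non-concentration I would argue by a frequency splitting, using only the uniform bounds $\|u\|_{L^\infty_t(L^2)}\lesssim E_0$ and $\int_0^{T^\ast}\|\na u\|_{L^2}^2\,dt\lesssim E_0^2$ from \eqref{529e3}. Fix an integer $N$ to be chosen. For the low frequencies, Bernstein's Lemma \ref{bernstein} gives $\|\na\dot\Delta_j u\|_{L^2}\le C2^j\|\dot\Delta_j u\|_{L^2}\le C2^j\|u\|_{L^\infty_t(L^2)}$, hence $\int_{t_1}^{t_2}\sum_{j<N}\|\na\dot\Delta_j u\|_{L^2}^2\,dt\le C(t_2-t_1)2^{2N}E_0^2$. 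For the high frequencies I bound $\int_{t_1}^{t_2}\sum_{j\ge N}\|\na\dot\Delta_j u\|_{L^2}^2\,dt$ by the tail $\sum_{j\ge N}\int_0^{T^\ast}\|\na\dot\Delta_j u\|_{L^2}^2\,dt$ of the convergent series whose total equals $\int_0^{T^\ast}\|\na u\|_{L^2}^2\,dt\le CE_0^2$. Given $\nu>0$, I first choose $N$ so large that this tail is $<\nu^2/2$, and then choose $\lambda>0$ so small that $C\lambda 2^{2N}E_0^2<\nu^2/2$. For any $0\le t_1<t_2<T^\ast$ with $t_2-t_1\le\lambda$ the two pieces add to less than $\nu^2$, whence $\|\na u\|_{L^2([t_1,t_2];L^2)}\le\nu$; the resulting $\lambda$ is independent of the particular interval, as required.

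The main obstacle is the first bound: upgrading the elementary $L^\infty_t(L^2)$ control to the Chemin--Lerner space $\widetilde{L}^\infty_t(L^2)$ forces one to estimate the transport nonlinearity frequency by frequency at low regularity, i.e.\ without any control of $\|\na u\|_{L^\infty}$, so the commutator in (i) must be matched to the available energy norms through a careful choice of indices in Lemma \ref{jiaohuanzi} together with the parabolic gain $\|\na\dot\Delta_j u\|_{L^2}^2$. By contrast, the non-concentration is soft once \eqref{529e3} is in hand: the only subtlety is that the high-frequency remainder is the tail of a fixed convergent series, so the threshold $N$ (hence $\lambda$) may depend on the solution but stays uniform over all subintervals $[t_1,t_2]$, which is precisely what the statement demands.
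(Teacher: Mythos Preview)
The paper does not supply its own proof of this lemma: it merely cites \cite{zhangping2017}. So there is no argument in the paper to compare against; I can only assess your outline on its merits.

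Your Part~2 argument (non-concentration via the low/high frequency split and the fixed convergent tail $\sum_{j\ge N}\int_0^{T^\ast}\|\na\dot\Delta_j u\|_{L^2}^2\,dt$) is correct and standard; nothing to add.

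For Part~1 there is a genuine gap in step (i). At regularity $\sigma=0$ every branch of Lemma~\ref{jiaohuanzi} that you could invoke with the available indices still carries a factor $\|\na u\|_{L^\infty}$ (or $\|\na u\|_{B^{d/p_1}_{p_1,\infty}\cap L^\infty}$), and that norm is not controlled at this stage of the argument --- it is precisely what the whole Section~3.3 is working towards. Your closing remark that the commutator ``must be matched to the available energy norms through a careful choice of indices in Lemma~\ref{jiaohuanzi}'' does not have a concrete realization: no choice of $(p,p_1,\sigma,r)$ in that lemma avoids the $L^\infty$ factor here.

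The clean fix is to drop the commutator altogether. Write $u\cdot\na u=\diverg(u\otimes u)$, integrate by parts, and use Young's inequality together with the parabolic gain you already isolated:
\[
\Bigl|\int\dot\Delta_j(u\cdot\na u)\cdot\dot\Delta_j u\,dx\Bigr|
\le \|\dot\Delta_j(u\otimes u)\|_{L^2}\|\na\dot\Delta_j u\|_{L^2}
\le \tfrac14\|\na\dot\Delta_j u\|_{L^2}^2+\|\dot\Delta_j(u\otimes u)\|_{L^2}^2.
\]
After absorbing the first piece and summing in $j$, Plancherel and the $2$-D Ladyzhenskaya inequality give
\[
\sum_j\int_0^t\|\dot\Delta_j(u\otimes u)\|_{L^2}^2\,dt'
=\int_0^t\|u\|_{L^4}^4\,dt'
\lesssim \|u\|_{L^\infty_t(L^2)}^2\|\na u\|_{L^2_t(L^2)}^2\lesssim E_0^4,
\]
which lands exactly on your target bound $CE_0(1+E_0)$. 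The same device handles (ii): summing $\|\dot\Delta_j((\mu(\ta)-1)d(u))\|_{L^2}^2$ in $j$ produces $\|\mu(\ta)-1\|_{L^\infty}^2\|\na u\|_{L^2_t(L^2)}^2\lesssim \e_0^2E_0^2$, so no genuine absorption is even required there. Step (iii) is fine as written.
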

With estimate \eqref{529pingping23+1} and Lemma \ref{jiarui1} in hand, we can also use the same boot-strap argument to get the global in time estimate of $\|\nabla u\|_{L^\infty_t(L^2)}$ and $\|\nabla u\|_{L^2_t(L^p)}$. The whole process can be obtained similarly to Proposition 4.2 in \cite{zhangping2017} without any difficulties. Here, we omit the details for convenience. Yet, we still use the same
 notations as in \cite{zhangping2017} in our further estimates.
In fact, we obtain the following proposition:

\begin{proposition}\label{jiarui13}
Let $(\ta, u)$ be a smooth solution of \eqref{m} on $[0, T^\ast).$ Then under the assumptions of Theorem \ref{maintheorem} and for some sufficiently small $\e$, for any  $t <T^\ast$, we have
\begin{align}
&\|\ta\|_{L^\infty_t(\dot{H}^{\alpha/2})}^2
+\ln(1+\|\ta\|^2_{L^\infty_t(B_{p,\infty}^{\alpha/2})})+\|\ta\|_{L^2_t(\dot{H}^{\alpha})}^2
\nonumber\\
&\quad\le C^{2+C\|\ta_0\|_{L^\infty}^2E^2_0}
\Big(\mathcal{A}+\mathcal{B}+\|\ta_0\|_{\dot{H}^{\alpha/2}}^2
+\ln\Big(1
+\|\ta_0\|_{B_{p,\infty}^{\alpha/2}}^2+\|\nabla u_0\|_{L^2}^2\Big)\Big)\triangleq \mathcal{G}_1,\label{jiarui14}
\\
&\|\nabla u\|_{L^\infty_t(L^2)}^2+\|\p_t u\|_{L^2_t(L^2)}^2
\le C(\|\nabla u_0\|_{L^2}^2+E_0^2)\exp(CE_0^2(1+E_0^2)+\mathcal{G}_1)\triangleq\mathcal{G}_2,\label{jiarui15}
\\
&
\|\nabla u\|_{L^2_t(L^p)}
\le C\sqrt{p}E_0(1+E_0)^{1-\frac{2}{p}}(1+\sqrt{\mathcal{G}_2})\triangleq\mathcal{G}_3,\label{jiarui16}
\end{align}
where
$
\mathcal{A}\triangleq CE_0^2(1+E_0^2+\|\ta_0\|_{L^\infty\cap L^2}^2),\quad
\mathcal{B}\triangleq\mathcal{A}+E_0^2\|\ta_0\|_{L^\infty}^2\ln(1+CE_0(1+E_0)\|\ta_0\|_{L^\infty}).
$
\end{proposition}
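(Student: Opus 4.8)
The plan is to close the coupled system of bounds \eqref{jiarui14}--\eqref{jiarui16} by a continuation-and-bootstrap argument that absorbs the circular dependence already assembled in \eqref{529pingping23+1} and \eqref{pingping32}. Write $X(t)\triangleq\|\na u\|_{L^\infty_t(L^2)}^2+\|\p_t u\|_{L^2_t(L^2)}^2$. The three target estimates are not independent: Proposition \ref{pingping1} expresses $\|\ta\|_{L^2_t(\dot{H}^{\alpha})}^2$ through $\|\na u\|_{L^2_t(L^2)}^2$ and a logarithm of $\|\na u\|_{L^2_t(L^p)}$, this was fed into Proposition \ref{pingping565} to produce \eqref{529pingping23+123456}, and \eqref{pingping32} in turn bounds $\|\na u\|_{L^2_t(L^p)}$ by $\sqrt{X}$. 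Thus everything reduces to solving the single self-referential inequality \eqref{529pingping23+1} for $X$; afterwards \eqref{pingping32} yields $\mathcal{G}_3$ and a back-substitution into \eqref{529tianjue+00} gives $\mathcal{G}_1$.

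I would first read off the structure of \eqref{529pingping23+1}: up to the fixed quantities $M_0$ and $\|\ta_0\|_{B_{p,\infty}^{\alpha/2}}$ its right-hand side is $CM_0\{\,e+\|\ta_0\|_{B_{p,\infty}^{\alpha/2}}+c\sqrt{p}\,E_0(1+E_0)^{1-2/p}\|\ta_0\|_{L^\infty}(1+\sqrt{X})\}^{\gamma}$ with exponent $\gamma=C\|\ta_0\|_{L^\infty}^2\|\na u\|_{L^2_t(L^2)}^2$, since $\|\p_t u\|_{L^2_t(L^2)}+\|\na u\|_{L^\infty_t(L^2)}\lesssim\sqrt{X}$. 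If one could guarantee $\gamma\le\tfrac12$, the base would enter only to a sublinear power, so $\{\cdots(1+\sqrt X)\}^{\gamma}\lesssim 1+X^{1/4}$, and Young's inequality would absorb $X$ into the left side and close the bound by a constant multiple of $M_0$. The \emph{genuine obstacle} is that globally $\|\na u\|_{L^2_t(L^2)}^2\le CE_0^2$ by \eqref{529e3}, which is not small, so $\gamma$ may be large and the naive absorption fails.

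To defeat this I would invoke the non-concentration of dissipation, Lemma \ref{jiarui1}. Fix $T<T^\ast$ and choose $\nu>0$ so small that $C\|\ta_0\|_{L^\infty}^2\nu^2\le\tfrac12$; Lemma \ref{jiarui1} supplies the uniform absolute continuity of $t\mapsto\|\na u\|_{L^2([0,t];L^2)}^2$, while the global bound $\|\na u\|_{L^2([0,T^\ast);L^2)}^2\le CE_0^2$ forces the number of pieces to be finite. Hence I can split $[0,T]$ into $N\lesssim 1+E_0^2/\nu^2$ consecutive intervals $I_k=[t_k,t_{k+1}]$ with $\|\na u\|_{L^2(I_k;L^2)}\le\nu$. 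On each $I_k$ I re-run Propositions \ref{pingping565}--\ref{pingping1} from the initial time $t_k$; since the maximum principle (Lemma \ref{shuyun}) gives $\|\ta(t_k)\|_{L^\infty}\le\|\ta_0\|_{L^\infty}$, the localized form of \eqref{529pingping23+1} has exponent $C\|\ta(t_k)\|_{L^\infty}^2\|\na u\|_{L^2(I_k;L^2)}^2\le\tfrac12$, so the sublinear absorption above closes $X$ over $I_k$ in terms of $\|\na u(t_k)\|_{L^2}^2$ and $M_0$.

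Finally I would iterate over the $N$ intervals, feeding the value $\|\na u(t_{k+1})\|_{L^2}^2$, controlled by the closed bound on $I_k$, as the datum for $I_{k+1}$; because $N$ depends only on $E_0$ and the fixed $\nu$, composing the $N$ interval-wise closures yields the global estimate \eqref{jiarui15}, namely $X\le\mathcal{G}_2$, with the constant $C^{2+C\|\ta_0\|_{L^\infty}^2E_0^2}$ recording the finite iteration together with $\|\na u\|_{L^2_t(L^2)}^2\le CE_0^2$ in the exponent. Substituting $\mathcal{G}_2$ into \eqref{pingping32} then gives \eqref{jiarui16}, and inserting $\mathcal{G}_3$ into \eqref{529tianjue+00}, while using \eqref{529tianjue} and \eqref{529tianjue1+1} to linearize the logarithm, produces \eqref{jiarui14}. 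The whole scheme is the two-dimensional analogue of Proposition 4.2 in \cite{zhangping2017}; the only conceptual step that makes it work is the interval decomposition rendering $\gamma\le\tfrac12$ on each piece, all remaining manipulations (the scalar sublinear inequality, the finite composition, the back-substitutions) being routine.
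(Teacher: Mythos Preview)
Your proposal is correct and follows essentially the same route as the paper: the paper explicitly says that with \eqref{529pingping23+1} and Lemma \ref{jiarui1} in hand one closes the global $\dot H^1$ bound by the boot-strap argument of Proposition 4.2 in \cite{zhangping2017}, omitting the details, and what you have written is precisely that argument---partition $[0,T]$ into $N\lesssim 1+C\|\ta_0\|_{L^\infty}^2E_0^2$ subintervals on which the exponent in \eqref{529pingping23+1} is at most $1/2$, absorb the sublinear power of $\sqrt{X}$ on each piece, and iterate. Your back-substitution order ($\mathcal{G}_2\Rightarrow\mathcal{G}_3\Rightarrow\mathcal{G}_1$ via \eqref{pingping32} and \eqref{529tianjue+00}) is also the intended one.
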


\bigskip

\subsection{ \large\bf The improved derivative energy estimates for $\ta$ and $u$}
With the $\dot{H}^1$ energy estimates for $u$ and $\dot{H}^{\alpha/2}$ for $\ta$ in hand in the last subsection, the  most important thing in what follows is to get
$\|u\|_{L^1_t(\dot B^{1}_{\infty,1})}$ and $\|\ta\|_{L^1_t(B_{p,\infty}^{3\alpha/2})}$.
More precisely, we get the following {proposition}:
\begin{proposition}\label{xiaoxiao1}
Let $(\ta, u)$ be a smooth enough solution of \eqref{m} on $[0, T^\ast).$ Then under the assumptions of Theorem \ref{maintheorem} and for any  $t <T^\ast$, if  we assume moreover that $u_0\in\dot B^{-1}_{\infty,1}$, there holds
\begin{align}\label{peinam1+896}
\|u\|_{L^1_t(\dot B^1_{\infty,1})}
\lesssim&\|u_0\|_{\dot B^{-1}_{\infty,1}}+t\|\ta_0\|_{L^q}^{ q/2}\|\ta_0\|_{L^\infty}^{1-q/2}
+E_0^2(1+E_0)+\mathcal{G}_3(t^{1/2}\|\ta_0\|_{L^2})^{\frac{\alpha p-4}{(\alpha+1)p-2}} \mathcal{G}_4^{\frac{ p+2}{(\alpha+1)p-2}}\nonumber\\
\triangleq&\mathcal{H}(t),
\end{align}
where $\mathcal{G}_4\triangleq\|\ta_0\|_{B_{p,\infty}^{\alpha/2}}+C\|\ta_0\|_{L^\infty}\mathcal{G}_3.$
\end{proposition}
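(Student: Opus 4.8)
The plan is to treat the momentum equation as a forced heat equation and to exploit parabolic smoothing in the Chemin--Lerner space $\widetilde{L}^1_t(\dot{B}^1_{\infty,1})$. Writing $\mu(\ta)=1+(\mu(\ta)-1)$ and using $\diverg u=0$, the second equation of \eqref{m} reads $\partial_t u-\Delta u+\nabla\Pi=\ta e_2-u\cdot\nabla u+2\diverg\big((\mu(\ta)-1)d(u)\big)$. Applying the Leray projector $\mathbb{P}$ (which commutes with $\Delta$, annihilates $\nabla\Pi$, and is bounded on every $\dot{B}^s_{\infty,r}$ since its entries are order-zero Fourier multipliers) I obtain $\partial_t u-\Delta u=\mathbb{P}f$ with $f\triangleq\ta e_2-u\cdot\nabla u+2\diverg\big((\mu(\ta)-1)d(u)\big)$. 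Localizing with $\dot{\Delta}_j$, using $\|e^{t\Delta}\dot{\Delta}_j g\|_{L^\infty}\lesssim e^{-ct2^{2j}}\|\dot{\Delta}_j g\|_{L^\infty}$ and integrating the Duhamel formula in time, the maximal-regularity estimate for the heat flow yields
\begin{equation}\label{propplan-heat}
\|u\|_{L^1_t(\dot{B}^1_{\infty,1})}\lesssim \|u_0\|_{\dot{B}^{-1}_{\infty,1}}+\|f\|_{\widetilde{L}^1_t(\dot{B}^{-1}_{\infty,1})},
\end{equation}
where I have used $\widetilde{L}^1_t(\dot{B}^1_{\infty,1})=L^1_t(\dot{B}^1_{\infty,1})$ since the inner index equals $1$. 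It then remains to estimate the three contributions to $f$.

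For the buoyancy term I would bound $\|\ta\|_{\dot{B}^{-1}_{\infty,1}}$ pointwise in time by splitting the dyadic sum at a frequency $2^N$: Bernstein gives $\|\dot{\Delta}_j\ta\|_{L^\infty}\lesssim 2^{2j/q}\|\ta\|_{L^q}$ on low frequencies and $\|\dot{\Delta}_j\ta\|_{L^\infty}\lesssim\|\ta\|_{L^\infty}$ on high ones; since $q<2$ the low sum $\sum_{j\le N}2^{j(2/q-1)}$ converges, and optimizing $N$ produces $\|\ta\|_{\dot{B}^{-1}_{\infty,1}}\lesssim\|\ta\|_{L^q}^{q/2}\|\ta\|_{L^\infty}^{1-q/2}$. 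Because Lemma \ref{shuyun} preserves both the $L^q$ and the $L^\infty$ norms of $\ta$, integrating in time gives the first term $t\|\ta_0\|_{L^q}^{q/2}\|\ta_0\|_{L^\infty}^{1-q/2}$. For the convection term I write $u\cdot\nabla u=\diverg(u\otimes u)$, so $\|u\cdot\nabla u\|_{\dot{B}^{-1}_{\infty,1}}\lesssim\|u\otimes u\|_{\dot{B}^0_{\infty,1}}$, and close it in two dimensions via a product law together with $\dot{B}^{1/2}_{4,1}\hookrightarrow\dot{B}^0_{\infty,1}$ and the interpolation of $\|u\|_{\dot{B}^{1/2}_{4,1}}$ between $\|u\|_{L^2}$ and $\|u\|_{\dot{B}^{3/2}_{2,\infty}}$; feeding in $\|u\|_{\widetilde{L}^\infty_t(L^2)}\lesssim E_0(1+E_0)$ from Lemma \ref{jiarui1} and the energy bound \eqref{529e3} yields the contribution $E_0^2(1+E_0)$.

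The real difficulty is the viscosity perturbation $2\diverg\big((\mu(\ta)-1)d(u)\big)$, whose $\widetilde{L}^1_t(\dot{B}^{-1}_{\infty,1})$ norm I dominate by $\|(\mu(\ta)-1)d(u)\|_{\widetilde{L}^1_t(\dot{B}^0_{\infty,1})}$ and split through Bony's decomposition into $\dot{T}_{\mu(\ta)-1}d(u)$, $\dot{T}_{d(u)}(\mu(\ta)-1)$ and $\dot{R}(\mu(\ta)-1,d(u))$. The first piece is controlled by $\|\mu(\ta)-1\|_{L^\infty}\|u\|_{\dot{B}^1_{\infty,1}}$, and since \eqref{tiaojian} forces $\|\mu(\ta)-1\|_{L^\infty}\le\e_0$, after time integration it is $\lesssim\e_0\|u\|_{L^1_t(\dot{B}^1_{\infty,1})}$ and can be \emph{absorbed} into the left-hand side of \eqref{propplan-heat} for $\e_0$ small (this is exactly where the constraint $p<1/(C\|\mu(\cdot)-1\|_{L^\infty})$ is spent). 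For the remaining two terms the regularity must sit on $\mu(\ta)-1$: I use Lemma \ref{fuhe} with $F=\mu-1$ (so $F(0)=0$ and $F''\in W^{\sigma,\infty}$ by $\mu\in W^{2,\infty}$) to transfer Besov regularity from $\ta$ to $\mu(\ta)-1$, and pair it against $d(u)$ measured in $L^p$, i.e. against $\mathcal{G}_3=\|\nabla u\|_{L^2_t(L^p)}$.

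I expect the delicate heart of the argument to be this last product estimate. To make the product land in $\dot{B}^0_{\infty,1}$ the temperature must be placed in a Besov space whose scaling exponent exactly compensates that of $\nabla u\in L^p$; this is achieved by interpolating $\ta$ between its low bound $\|\ta\|_{L^\infty_t(L^2)}\le\|\ta_0\|_{L^2}$ and its high bound $\|\ta\|_{\widetilde{L}^\infty_t(B^{\alpha/2}_{p,\infty})}\sim\mathcal{G}_4$ coming from \eqref{529tianjue}, followed by Hölder in time against $\mathcal{G}_3$. Solving the two linear constraints — matching the scaling so the product closes in $\dot{B}^0_{\infty,1}$, and matching the time integrability to $L^1_t$ — fixes the interpolation weights to be $\tfrac{\alpha p-4}{(\alpha+1)p-2}$ on the $L^2$ factor and $\tfrac{p+2}{(\alpha+1)p-2}$ on the $B^{\alpha/2}_{p,\infty}$ factor, reproducing precisely $\mathcal{G}_3\,(t^{1/2}\|\ta_0\|_{L^2})^{(\alpha p-4)/((\alpha+1)p-2)}\,\mathcal{G}_4^{(p+2)/((\alpha+1)p-2)}$. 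Collecting the three contributions in \eqref{propplan-heat} and moving the absorbed $\e_0$-term to the left gives \eqref{peinam1+896}. The point to verify carefully is that both interpolation weights lie in $[0,1]$, equivalently $\alpha p\ge 4$, which is guaranteed by $2/3<\alpha$ together with $p>8/(3\alpha-2)>4/\alpha$; this is where the supercritical range of $\alpha$ and the lower bound on $p$ genuinely enter.
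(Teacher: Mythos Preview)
Your overall architecture matches the paper's: Duhamel for the heat semigroup, the same frequency-splitting for the buoyancy term, Bony decomposition on $(\mu(\ta)-1)d(u)$, and absorption of the $\|\mu(\ta)-1\|_{L^\infty}\|u\|_{\dot B^1_{\infty,1}}$ piece. Two details, however, do not close as written.

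\textbf{Convection term.} Your route through $\|u\|_{\dot B^{3/2}_{2,\infty}}$ is not available from the energy estimate \eqref{529e3} alone; that quantity is only controlled in Corollary \ref{xiaoxiao6}, and even there the bound is $\mathcal{G}_5$, not $E_0^2(1+E_0)$. The paper instead uses the pointwise product estimate
\[
\|u\cdot\nabla u\|_{\dot B^{-1}_{\infty,1}}\lesssim \|\nabla u\|_{L^2}^2+\|u\|_{\dot H^{1/2}}\|\nabla u\|_{L^2}^{1/2}\|u\|_{\dot B^1_{\infty,1}}^{1/2},
\]
so that after Young's inequality one obtains a second absorbable term $\varepsilon\|u\|_{L^1_t(\dot B^1_{\infty,1})}$ on the right and is left with $\|\nabla u\|_{L^2_t(L^2)}^2+\|u\|_{L^\infty_t(L^2)}\|\nabla u\|_{L^2_t(L^2)}^2\le E_0^2(1+E_0)$. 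Without this device your convection bound does not reduce to the stated $E_0^2(1+E_0)$.

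\textbf{Viscosity term.} After placing $\nabla u$ in $L^2_t(L^p)$, the product estimate forces $\ta$ into $\widetilde L^2_t(\dot B^{4/p}_{p,1})$, not an $L^\infty_t$ space. If you interpolate between the two $L^\infty_t$ endpoints $\|\ta\|_{L^\infty_t(L^2)}$ and $\|\ta\|_{\widetilde L^\infty_t(B^{\alpha/2}_{p,\infty})}$ as you propose, both pick up a factor $t^{1/2}$ and the resulting exponents are $\tfrac{\alpha p-8}{(\alpha+2)p-4}$ and $\tfrac{2p+4}{(\alpha+2)p-4}$, not the ones in \eqref{peinam1+896}. The paper splits $\widetilde L^2_t(\dot B^{4/p}_{p,1})$ at level $N$: low frequencies via Bernstein from $L^2$ give $t^{1/2}\|\ta_0\|_{L^2}2^{(1+2/p)N}$, while high frequencies use the \emph{other} half of \eqref{529tianjue}, namely $\|\ta\|_{\widetilde L^2_t(B^{\alpha}_{p,\infty})}\le\mathcal{G}_4$, giving $2^{-(\alpha-4/p)N}\mathcal{G}_4$. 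Optimizing $N$ then produces exactly $(t^{1/2}\|\ta_0\|_{L^2})^{(\alpha p-4)/((\alpha+1)p-2)}\mathcal{G}_4^{(p+2)/((\alpha+1)p-2)}$. So the correct high endpoint is $\widetilde L^2_t(B^{\alpha}_{p,\infty})$, not $\widetilde L^\infty_t(B^{\alpha/2}_{p,\infty})$.

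A minor point: the absorption here uses only the smallness \eqref{tiaojian}; the constraint $p<1/(C\|\mu(\cdot)-1\|_{L^\infty})$ is spent earlier, in the derivation of $\|\nabla u\|_{L^2_t(L^p)}$.
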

\begin{proof}
Using \eqref{jiarui16} we  can obtain similarly to the first estimate in Proposition  \ref{pingping1} that
\begin{align}\label{pingping2+896}
\|\ta\|_{\widetilde{L}^\infty_t(B_{p,\infty}^{\alpha/2})}+
\|\ta\|_{\widetilde{L}^2_t(B_{p,\infty}^{\alpha})}\le
\|\ta_0\|_{B_{p,\infty}^{\alpha/2}}+\|\ta_0\|_{L^\infty}\|\nabla u\|_{L^2_t(L^p)}\le\|\ta_0\|_{B_{p,\infty}^{\alpha/2}}
+C\|\ta_0\|_{L^\infty}\mathcal{G}_3\triangleq\mathcal{G}_4.
\end{align}
From equation \eqref{m}, one can easily deduce that
\begin{align}\label{jiarui5+258}
\dot{\Delta}_ju(t)=e^{t\Delta}\dot{\Delta}_ju_0
+\int_0^te^{-(t-t')\Delta}\dot{\Delta}_j\mathbb{P}\Big\{
\diverg(2(\mu(\ta)-1)d (u))-u\cdot\nabla u+\theta e_2\Big\}
(t')\,dt'.
\end{align}
A standard energy estimate gives
\begin{align}\label{peinam1}
\|u\|_{L^1_t(\dot B^1_{\infty,1})}
\lesssim&\|u_0\|_{\dot B^{-1}_{\infty,1}}+\|u\cdot\na u\|_{L^1_t(\dot B^{-1}_{\infty,1})}+\|(\mu(\ta)-1)\na u\|_{L^1_t(\dot B^{0}_{\infty,1})}
+\|\ta\|_{L^1_t(\dot B^{-1}_{\infty,1})}\nonumber\\
\lesssim&\|u_0\|_{\dot B^{-1}_{\infty,1}}+\varepsilon\| u\|_{L^1_t(\dot B^{1}_{\infty,1})}
+\|\na u\|_{L^2_t(L^2)}^2
+\| u\|_{L^\infty_t(L^2)}\|\na u\|_{L^2_t(L^2)}^2\nonumber\\
&+\|\na u\|_{{L}^2_t(L^p)}\|\ta\|_{\widetilde{L}^2_t(\dot B^{4/p}_{p,1})} +\|\mu(\ta)-1\|_{L^\infty} \|u\|_{L^1_t(\dot B^1_{\infty,1})}+\|\ta\|_{L^1_t(\dot B^{-1}_{\infty,1})}\nonumber\\
\lesssim&\|u_0\|_{\dot B^{-1}_{\infty,1}}+\varepsilon\| u\|_{L^1_t(\dot B^{1}_{\infty,1})}
+\|\na u\|_{L^2_t(L^2)}^2
+\| u\|_{L^\infty_t(L^2)}\|\na u\|_{L^2_t(L^2)}^2\nonumber\\
&+\|\na u\|_{\widetilde{L}^2_t(L^p)}(t^{1/2}\|\ta_0\|_{L^2}2^{(1+2/p)N}+
\|\ta\|_{\widetilde{L}^2_t(B_{p,\infty}^{\alpha})}2^{-(\alpha-4/p)N}) \nonumber\\&+\|\mu(\ta)-1\|_{L^\infty} \|u\|_{L^1_t(\dot B^1_{\infty,1})}+t(\|\ta_0\|_{L^q}2^{(2/q-1)L}+2^{-L}\|\ta_0\|_{L^\infty})
,\end{align}
where we have used the following two estimates which can be proved similarly as in \cite{zhangping2017}
\begin{align*}%\label{peinam}
&\|u\cdot\na u\|_{\dot B^{-1}_{\infty,1}}
\lesssim
\|\na u\|_{L^2}^2+
\| u\|_{\dot{H}^{1/2}}\|\na u\|_{L^2}^{1/2}\| u\|_{\dot B^{1}_{\infty,1}}^{1/2},\nonumber\\
&\|(\mu(\ta)-1)\na u\|_{\dot B^{0}_{\infty,1}}
\lesssim\|\na u\|_{L^p}\|\ta\|_{\dot B^{ 4/p}_{p,1}} +\|\mu(\ta)-1\|_{L^\infty} \|u\|_{\dot B^1_{\infty,1}}.
\end{align*}

In the above inequality \eqref{peinam1}, choosing $\varepsilon$ small enough and $L,$ $N$ such that
$$\|\ta_0\|_{L^q}2^{(2/q-1)L}\sim2^{-L}\|\ta_0\|_{L^\infty},\quad
t^{1/2}\|\ta_0\|_{L^2}2^{(1+2/p)N}\sim
\|\ta\|_{\widetilde{L}^2_t(B_{p,\infty}^{\alpha})}2^{-(\alpha-4/p)N},$$
 we can obtain
\begin{align*}%\label{peinam1+7895}
\|u\|_{L^1_t(\dot B^1_{\infty,1})}
\lesssim&\|u_0\|_{\dot B^{-1}_{\infty,1}}
+\|\na u\|_{L^2_t(L^2)}^2
+\| u\|_{L^\infty_t(L^2)}\|\na u\|_{L^2_t(L^2)}^2\nonumber\\
&+\|\na u\|_{{L}^2_t(L^p)}(t^{12}\|\ta_0\|_{L^2})^{\frac{\alpha p-4}{(\alpha+1)p-2}} \|\ta\|_{\widetilde{L}^2_t(B_{p,\infty}^{\alpha})}^{\frac{ p+2}{(\alpha+1)p-2}}+t\|\ta_0\|_{L^q}^{q/2}\|\ta_0\|_{L^\infty}^{1- q/2},
\end{align*}
thus, using \eqref{529e3},  \eqref{jiarui16}, \eqref{pingping2+896}, we have
\begin{align}\label{peinam1+7896}
\|u\|_{L^1_t(\dot B^1_{\infty,1})}
\lesssim&\|u_0\|_{\dot B^{-1}_{\infty,1}}
+E_0^2
(1+E_0)+\mathcal{G}_3(t^{1/2}\|\ta_0\|_{L^2})^{\frac{\alpha p-4}{(\alpha+1)p-2}} \mathcal{G}_4^{\frac{ p+2}{(\alpha+1)p-2}}+t\|\ta_0\|_{L^q}^{ q/2}\|\ta_0\|_{L^\infty}^{1-q/2}.
\end{align}

\end{proof}

With estimate $\| u\|_{L^1_t(\dot B^{1}_{\infty,1})}$ in hand, we can use the following commutator's estimate
 \begin{align*}%\label{}
\|[ \dot{\Delta}_j,u\cdot\nabla]\ta\|_{L^p}\lesssim
 2^{- \alpha j/2}\|\na u\|_{L^\infty}\|\ta\|_{\dot{B}_{p,\infty}^{\alpha/2}}
\end{align*}
which the proof  can be easily obtained  by using Bony's decomposition that
\begin{align}\label{xiaoxiao5}
\|\ta\|_{\widetilde{L}^\infty_t(\dot{B}_{p,\infty}^{\alpha/2})}+
\|\ta\|_{\widetilde{L}^1_t(\dot{B}_{p,\infty}^{3\alpha/2})}\le
\|\ta_0\|_{\dot{B}_{p,\infty}^{\alpha/2}}\exp(C\mathcal{H}(t)).
\end{align}
From estimate \eqref{xiaoxiao5}, we can obtain the following corollary about $\|u\|_{\widetilde{L}^2_t(\dot{B}_{2,\infty}^{3/2})}$:
\begin{corollary}\label{xiaoxiao6}
Let $(\ta, u)$ be a smooth enough solution of \eqref{m} on $[0, T^\ast).$ Then under the assumptions of Theorem \ref{maintheorem} and for any  $t <T^\ast$, there holds
\begin{align*}%\label{xiaoxiao7}
\|u\|_{\widetilde{L}^2_t(\dot{B}_{2,\infty}^{3/2})}\lesssim&
\|u_0\|_{H^1}+E_0\Big(\mathcal{G}_2^{1/2}\big(1+E_0^{1/2}\mathcal{G}_2^{1/4}
+E_0^{1/2}(1+E_0)^{1/2} (1+\mathcal{G}_2)^{1/4}\big)+\exp(C\mathcal{G}_1)\Big)
\triangleq \mathcal{G}_5.
\end{align*}
\end{corollary}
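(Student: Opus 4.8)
The plan is to apply a heat-semigroup (maximal-regularity) estimate directly to the Duhamel formula \eqref{jiarui5+258}, after splitting the viscous operator as $\diverg(2\mu(\ta)d(u))=\Delta u+\diverg\big(2(\mu(\ta)-1)d(u)\big)$, so that $u$ solves
\begin{align*}
\p_t u-\Delta u=\mathbb{P}\big\{\diverg(2(\mu(\ta)-1)d(u))-u\cdot\nabla u+\ta e_2\big\}.
\end{align*}
The standard smoothing estimate in Chemin-Lerner spaces (gaining $2/\rho$ derivatives in $\widetilde{L}^\rho_t$) with $\rho=2$, $p=2$, $r=\infty$ and base regularity $s=1/2$ gives
\begin{align*}
\|u\|_{\widetilde{L}^2_t(\dot{B}^{3/2}_{2,\infty})}\lesssim\|u_0\|_{\dot{B}^{1/2}_{2,\infty}}+\|F\|_{\widetilde{L}^2_t(\dot{B}^{-1/2}_{2,\infty})},
\end{align*}
where $F$ is the bracketed source. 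The data term is controlled by $\|u_0\|_{H^1}$ (low frequencies by $\|u_0\|_{L^2}$, high frequencies by $\|u_0\|_{\dot H^1}$), so it remains to estimate the three pieces of $F$ in $\widetilde{L}^2_t(\dot{B}^{-1/2}_{2,\infty})$.

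The buoyancy term is immediate: since $\dot{B}^0_{2,2}\hookrightarrow\dot{B}^{-1/2}_{2,\infty}$, one has $\|\ta e_2\|_{\widetilde{L}^2_t(\dot{B}^{-1/2}_{2,\infty})}\lesssim\|\ta\|_{L^2_t(L^2)}\lesssim E_0$ by \eqref{529e3}. For the convection term I would write $u\cdot\nabla u=\diverg(u\otimes u)$ and lose one derivative to reduce matters to $\|u\otimes u\|_{\widetilde{L}^2_t(\dot{B}^{1/2}_{2,\infty})}$; a Bony paraproduct decomposition together with the two-dimensional Gagliardo-Nirenberg inequalities reduces this to time integrals of $\|u\|_{L^2}$, $\|\nabla u\|_{L^2}$ and $\|\nabla u\|_{L^4}$, which close using $\|u\|_{L^\infty_t(L^2)}+\|\nabla u\|_{L^2_t(L^2)}\lesssim E_0$ from \eqref{529e3}, $\|\nabla u\|_{L^\infty_t(L^2)}\le\mathcal{G}_2^{1/2}$ from \eqref{jiarui15}, and $\|\nabla u\|_{L^2_t(L^4)}$ from \eqref{pingping31}. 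Distributing the time integrability by Hölder produces precisely the terms carrying powers of $E_0$ and $\mathcal{G}_2$ in the statement.

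The delicate piece is the viscous term $\mathbb{P}\diverg\big(2(\mu(\ta)-1)d(u)\big)$, which after losing one derivative becomes $\|(\mu(\ta)-1)d(u)\|_{\widetilde{L}^2_t(\dot{B}^{1/2}_{2,\infty})}$. I would again use the Bony decomposition. The top-order paraproduct $\dot T_{\mu(\ta)-1}\nabla u$ is bounded by $\|\mu(\ta)-1\|_{L^\infty}\|u\|_{\widetilde{L}^2_t(\dot{B}^{3/2}_{2,\infty})}$, i.e. by $\e_0$ times the left-hand side, and is absorbed on the left via the smallness assumption \eqref{tiaojian}; crucially, no derivative of $\mu(\ta)$ is spent here. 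The remaining paraproduct $\dot T_{\nabla u}(\mu(\ta)-1)$ and the remainder are lower order and carry the Besov norm of $\mu(\ta)-1$, which by the composition estimate Lemma \ref{fuhe} (using $\mu(0)=1$, and with $p>4/\alpha$ ensuring the required embeddings) is controlled by $\|\ta\|_{B^{\alpha/2}_{p,\infty}}$; the velocity factor stays in $\|\nabla u\|_{L^2_t(L^2)}\lesssim E_0$. Invoking \eqref{jiarui14}, which yields $\|\ta\|_{L^\infty_t(B^{\alpha/2}_{p,\infty})}\lesssim\exp(C\mathcal{G}_1)$, gives the contribution $E_0\exp(C\mathcal{G}_1)$.

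Collecting the data term and the three contributions, and renaming the resulting bound $\mathcal{G}_5$, yields the claimed inequality. The main obstacle is the viscous term: one must simultaneously peel off its top-order part and absorb it on the left using only the $L^\infty$-smallness \eqref{tiaojian}, and transfer the remaining regularity of $\mu(\ta)-1$ onto $\ta$ through the nonlinear composition estimate gaining no more than $\alpha/2$ derivatives — which is exactly what the constraint $p>4/\alpha$ and the logarithmic bound \eqref{jiarui14} are designed to permit (note that $\mathcal{G}_3$ does not survive, since the velocity factor here is measured only in $L^2_t(L^2)$). The secondary difficulty is purely bookkeeping: for each paraproduct piece one must choose the time exponents in the Chemin-Lerner norms so that Hölder in time closes against the previously established $L^\infty_t$, $L^2_t$ and $L^2_t(L^4)$ bounds for $\nabla u$.
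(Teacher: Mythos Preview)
Your overall strategy matches the paper's: apply heat–semigroup smoothing to the Duhamel formula \eqref{jiarui5+258}, bound the three source terms, and absorb the top paraproduct $\dot T_{\mu(\ta)-1}\nabla u$ on the left via the smallness \eqref{tiaojian}. However, your treatment of the remaining pieces of the viscous term has a real gap in the supercritical range $\alpha<1$.

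You assert that for $\dot T_{\nabla u}(\mu(\ta)-1)$ and $\dot R(\nabla u,\mu(\ta)-1)$ ``the velocity factor stays in $\|\nabla u\|_{L^2_t(L^2)}$'' while $\mu(\ta)-1$ is measured in $B^{\alpha/2}_{p,\infty}$. But the target space is $\dot B^{1/2}_{2,\infty}$: pairing $\nabla u\in \dot B^{0}_{2,2}$ with $\mu(\ta)-1\in \dot B^{\alpha/2}_{p,\infty}$ and embedding back into $L^2$–based spaces yields at best $\dot B^{\alpha/2-2/p}_{2,\infty}$, whose index satisfies $\alpha/2-2/p<1/2$ for every finite $p$ whenever $\alpha<1$. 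The hypothesis $p>4/\alpha$ only gives $\alpha/2-2/p>0$, not $\ge 1/2$, so it does not ``ensure the required embeddings'' here. In other words, since $\ta$ carries only $\alpha/2<1/2$ derivatives, the missing $(1-\alpha)/2$ derivatives must come from $\nabla u$. The paper closes this by estimating these pieces as
\[
\|\theta\|_{\widetilde L^\infty_t(\dot B^{\alpha/2}_{p,\infty})}\,
\|\nabla u\|_{\widetilde L^2_t(\dot B^{(1-\alpha)/2}_{2p/(p-2),2})},
\]
interpolating the second factor between $\|\nabla u\|_{L^2_t(L^2)}$ and $\|u\|_{\widetilde L^2_t(\dot B^{3/2}_{2,\infty})}$, and then applying Young's inequality to produce a \emph{second} term $\varepsilon\|u\|_{\widetilde L^2_t(\dot B^{3/2}_{2,\infty})}$ that is absorbed on the left (see \eqref{yaochen}). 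This interpolation is also what forces the power $\|\theta\|_{\widetilde L^\infty_t(\dot B^{\alpha/2}_{p,\infty})}^{p/(\alpha p-4)}$ and hence the factor $\exp(C\mathcal G_1)$ in the final bound.

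A smaller point: the embedding $\dot B^{0}_{2,2}\hookrightarrow\dot B^{-1/2}_{2,\infty}$ you invoke for the buoyancy term is false between homogeneous spaces (low frequencies are not controlled). The paper instead places $\ta$ in $\widetilde L^{4/3}_t(L^2)$, which corresponds to a source in $\widetilde L^{4/3}_t(\dot B^{0}_{2,\infty})$ (gaining exactly $3/2$ derivatives in $\widetilde L^2_t$), and bounds $\|\ta\|_{L^{4/3}_t(L^2)}\lesssim E_0$ using the decay \eqref{529ee}.
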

\begin{proof}
From equation \eqref{jiarui5+258}, we can get by a similar derivation of \eqref{peinam1} that
\begin{align}\label{yiming3+896}
\|u\|_{\widetilde{L}^2_t(\dot{B}_{2,\infty}^{3/2})}
\lesssim&
\|u_0\|_{\dot{B}_{2,\infty}^{3/2}}+\|\ta\|_{\widetilde{L}^{4/3}_t(L^2)}
+\|\na u\|_{L^2_t(L^4)}\|u\|_{L^\infty_t(L^2)}
+\|(\mu(\ta)-1)\na u\|_{\widetilde{L}^2_t(\dot{B}_{2,\infty}^{1/2})}
.
\end{align}

By using Bony's decomposition, para-product estimates  and
interpolation inequality, we can obtain for any $p> 4/\alpha$ that
\begin{align*}%\label{}
\|(\mu(\ta)-1)\na u\|_{\widetilde{L}^2_t(\dot{B}_{2,\infty}^{1/2})}
\lesssim&\|\mu(\ta)-1\|_{L^\infty_t(L^\infty)}\|u\|_{\widetilde{L}^2_t(\dot{B}_{2,\infty}^{3/2})}
+\|\theta\|_{\widetilde{L}^\infty_t(\dot{B}_{p,\infty}^{\alpha/2})}
\|\na u\|_{{\widetilde{L}}^2_t(\dot{B}_{\frac{2p}{p-2},2}^{{(1-\alpha)}/{2}})}
\nonumber\\
\lesssim&\|\mu(\ta)-1\|_{L^\infty_t(L^\infty)}\|u\|_{\widetilde{L}^2_t(\dot{B}_{2,\infty}^{3/2})}
+\|\theta\|_{\widetilde{L}^\infty_t(\dot{B}_{p,\infty}^{\alpha/2})}
\|\na u\|_{{L}^2_t(\dot{B}_{2,2}^{0})}^{\alpha+{4}/{p}}
\|\na u\|_{{\widetilde{L}}^2_t(\dot{B}_{2,\infty}^{1/2})}^{1-\alpha+{4}/{p}}
\nonumber\\
\lesssim&\|\mu(\ta)-1\|_{L^\infty_t(L^\infty)}\|u\|_{\widetilde{L}^2_t(\dot{B}_{2,\infty}^{3/2})}
+\|\theta\|_{\widetilde{L}^\infty_t(\dot{B}_{p,\infty}^{\alpha/2})}
\|\na u\|_{{L}^2_t({L}^2)}^{\alpha+{4}/{p}}
\| u\|_{{\widetilde{L}}^2_t(\dot{B}_{2,\infty}^{3/2})}^{1-\alpha+{4}/{p}}
\nonumber\\
\lesssim&\|\mu(\ta)-1\|_{L^\infty_t(L^\infty)}\|u\|_{\widetilde{L}^2_t(\dot{B}_{2,\infty}^{3/2})}
+\varepsilon
\| u\|_{{\widetilde{L}}^2_t(\dot{B}_{2,\infty}^{3/2})}
+\|\theta\|_{\widetilde{L}^\infty_t(\dot{B}_{p,\infty}^{\alpha/2})}
^{{p}/{(\alpha p-4)}}
\|\na u\|_{{L}^2_t({L}^2)}.
\end{align*}
Taking \eqref{tiaojian}  into consideration in the above estimate, we have
\begin{align}\label{yaochen}
\|(\mu(\ta)-1)\na u\|_{\widetilde{L}^2_t(\dot{B}_{2,\infty}^{1/2})}
\le\varepsilon
\| u\|_{{\widetilde{L}}^2_t(\dot{B}_{2,\infty}^{3/2})}
+\|\theta\|_{\widetilde{L}^\infty_t(\dot{B}_{p,\infty}^{\alpha/2})}
^{{p}/{(\alpha p-4)}}
\|\na u\|_{{L}^2_t({L}^2)}.
\end{align}

Substituting \eqref{yaochen}
 into \eqref{yiming3+896} and choosing $\varepsilon$ small enough imply
\begin{align}\label{yiming3+896+456}
\|u\|_{\widetilde{L}^2_t(\dot{B}_{2,\infty}^{3/2})}
\lesssim&
\|u_0\|_{\dot{B}_{2,\infty}^{3/2}}+\|\ta\|_{\widetilde{L}^{4/3}_t(L^2)}
+\|\na u\|_{L^2_t(L^4)}\|u\|_{L^\infty_t(L^2)}
+\|\theta\|_{\widetilde{L}^\infty_t(\dot{B}_{p,\infty}^{\alpha/2})}
^{{p}/{(\alpha p-4)}}
\|\na u\|_{{L}^2_t({L}^2)}
.
\end{align}
On one hand,
from estimates \eqref{pingping31} and  \eqref{jiarui15} we have
\begin{align}\label{xiaoxiao10+896}
\|\na u\|_{L^2_t(L^4)}\lesssim (1+E_0)E_0^{1/2} (1+\mathcal{G}_2)^{1/2}.
\end{align}
On the other hand,
it's easy to get from \eqref{529ee} that
\begin{align}\label{xiaoxiao9}
\|\ta\|_{L^{4/3}_t(L^2)}\lesssim E_0.
\end{align}
Thus, taking estimates \eqref{xiaoxiao10+896}, \eqref{xiaoxiao9} into \eqref{yiming3+896+456} and using \eqref{529e3}, \eqref{jiarui14},  we have
\begin{align*}%\label{yiming3+896+896}
\|u\|_{\widetilde{L}^2_t(\dot{B}_{2,\infty}^{3/2})}
\lesssim&
\|u_0\|_{\dot{B}_{2,\infty}^{3/2}}+E_0
+(1+E_0)E_0^{1/2} (1+\mathcal{G}_2)^{1/2}E_0
+\|\theta\|_{\widetilde{L}^\infty_t(\dot{B}_{p,\infty}^{\alpha/2})}
^{{p}/{(\alpha p-4)}}
E_0\nonumber\\
\lesssim&
\|u_0\|_{H^1}+E_0
+(1+E_0)E_0^{1/2} (1+\mathcal{G}_2)^{1/2}E_0
+\exp\Big(({{p}/{2(\alpha p-4)}})\mathcal{G}_1\Big)
E_0.
\end{align*}
Consequently, we complete the proof of this corollary.
\end{proof}

\section{\Large\bf Proof of  Theorem \ref{maintheorem}}

\subsection{ \large\bf The existence of  Theorem \ref{maintheorem}}
Before giving the existence of Theorem \ref{maintheorem}, we will present the following lemma  about the propagation of low regularities for the temperature function $\ta.$
\begin{lemma}\label{xiaoxiao11}
Let $( u, \ta)$ be a smooth solution of system \eqref{m} on $[0, T^\ast).$ Then under the assumptions of Theorem \ref{maintheorem}, we have for any $t<T^\ast$
\begin{align}\label{xiaoxiao12}
\|\ta\|_{\widetilde{L}^\infty_t(\dot{H}^{-s_0})}\le CE_0(1+E_0(1+E_0+\mathcal{G}_1))\triangleq \mathcal{G}_6
\end{align}
for $E_0$ and $\mathcal{G}_1$ given by \eqref{e0} and \eqref{jiarui14} respectively.
\end{lemma}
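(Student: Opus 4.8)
The goal is to propagate the negative Sobolev regularity $\dot H^{-s_0}$ of the temperature, where $3-2\alpha<s_0$. The plan is to apply $\dot{\Delta}_j$ to the temperature equation $\partial_t\ta+u\cdot\nabla\ta+|D|^\alpha\ta=0$, take the $L^2$ inner product with $\dot{\Delta}_j\ta$, and exploit the smoothing from the dissipative term while controlling the transport term through a commutator decomposition. Since the dissipation $\||D|^\alpha\dot\Delta_j\ta\|_{L^2}^2\gtrsim 2^{2j\alpha}\|\dot\Delta_j\ta\|_{L^2}^2$ is positive, the main work is to estimate the transport contribution after Bony's paraproduct splitting $u\cdot\nabla\ta=\dot{T}_u\nabla\ta+\dot{T}_{\nabla\ta}u+\dot{R}(u,\nabla\ta)$, exactly as in Proposition~\ref{pingping1} but now at the negative regularity level $-s_0$.

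First I would treat the paraproduct $\dot{T}_u\nabla\ta$ via the standard commutator trick, writing $\int\dot{\Delta}_j(\dot{T}_u\nabla\ta)\cdot\dot{\Delta}_j\ta\,dx$ as a sum of a term where the low-frequency velocity can be pulled out (using $\diverg u=0$ to kill the leading piece) and a commutator $[\dot{\Delta}_j,\dot{S}_{j'-1}u]\dot{\Delta}_{j'}\nabla\ta$; here I would invoke the commutator Lemma~\ref{jiaohuanzi} with the limit-case estimate appropriate to $\sigma=-s_0$, controlling things by $\|\nabla u\|_{L^\infty}$ or $\|\nabla u\|_{B^{d/p_1}_{p_1,1}}$. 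The remainder term $\dot{R}(u,\nabla\ta)$ and the paraproduct $\dot{T}_{\nabla\ta}u$ would be estimated by Bernstein's Lemma~\ref{bernstein} together with the already-established bounds $\|u\|_{L^\infty_t(L^2)}\lesssim E_0$, $\|\ta\|_{L^2_t(\dot H^\alpha)}\le\mathcal{G}_1^{1/2}$, and the low-regularity control of $u$ coming from $u\in\widetilde L^\infty_t(L^2)$. The condition $s_0<4\alpha/q-8\alpha+6$ and $s_0>3-2\alpha$ should be precisely what is needed so that the frequency sums converge and the product estimates close.

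After summing over $j$ and taking the supremum in $j$ (because the target norm is $\widetilde L^\infty_t(\dot H^{-s_0})$ rather than $L^\infty_t(\dot H^{-s_0})$, which is the natural space in which the dissipation and the $\ell^\infty$-type commutator bound cooperate), I expect to arrive at an inequality of the form
\begin{align*}
\|\dot\Delta_j\ta\|_{L^\infty_t(L^2)}\lesssim\|\dot\Delta_j\ta_0\|_{L^2}
+2^{-js_0}c_j\int_0^t\|\nabla u\|_{\star}\,\|\ta\|_{\dot H^\alpha\text{ or }L^2}\,dt',
\end{align*}
where $\star$ denotes $L^\infty$ or a suitable Besov norm controlled by $\|u\|_{L^1_t(\dot B^1_{\infty,1})}=\mathcal{H}(t)$ and by $\mathcal{G}_1$. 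Multiplying by $2^{-js_0}$, taking $\sup_j$, and integrating in time then yields the bound $CE_0(1+E_0(1+E_0+\mathcal{G}_1))$, matching the claimed $\mathcal{G}_6$; the factor $E_0$ comes from $\|\ta_0\|_{\dot H^{-s_0}}\le\mathcal{E}_0\le E_0$ together with the energy bounds, while the $\mathcal{G}_1$ dependence enters through $\|\ta\|_{L^2_t(\dot H^\alpha)}$.

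The main obstacle will be the low-regularity commutator estimate for the transport term: at the negative index $\sigma=-s_0$ with $s_0$ possibly larger than $1$ when $\alpha$ is near $2/3$, one must carefully choose the auxiliary integrability exponents $p_1,p$ in Lemma~\ref{jiaohuanzi} so that the hypothesis $\sigma>-1-d\min(1/p_1,1/p')$ (valid since $\diverg u=0$) is satisfied, and so that the velocity regularity actually available---namely $\nabla u\in L^2_t(L^2)\cap L^2_t(L^p)$ and $u\in L^1_t(\dot B^1_{\infty,1})$---suffices to bound $\|\nabla u\|_{B^{d/p_1}_{p_1,1}}$. Balancing these exponents against the constraint $3-2\alpha<s_0<4\alpha/q-8\alpha+6$ is the delicate point, and it is precisely here that the ``much more technical difficulties'' alluded to in the Remark are concentrated; everything else is a routine combination of Bernstein and the a priori estimates already collected in $\mathcal{G}_1,\dots,\mathcal{G}_5$.
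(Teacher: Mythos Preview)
Your plan has a genuine gap: the commutator route through Lemma~\ref{jiaohuanzi} with $\|\nabla u\|_{L^\infty}$ or $\|\nabla u\|_{B^{d/p_1}_{p_1,1}}$ does not close at the required index. With $p_1=\infty$ and $p=2$ in dimension $2$, the hypothesis $\sigma>-1-d\min(1/p_1,1/p')$ forces $s_0<1$, whereas $s_0>3-2\alpha\ge 1$ for every $\alpha\in(2/3,1]$; choosing $p_1$ finite would instead require $\nabla u\in B^{2/p_1}_{p_1,\infty}$, a regularity that is nowhere established. And even if you did force through a bound using $\|u\|_{L^1_t(\dot B^1_{\infty,1})}=\mathcal{H}(t)$, the result would depend on $\mathcal{H}(t)$ (which grows linearly in $t$), not on $E_0$ and $\mathcal{G}_1$ alone as the stated $\mathcal{G}_6$ demands. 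So the final bound you announce cannot follow from the inputs you propose.

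The paper proceeds quite differently. It uses the $2$-D commutator estimate
\[
\sum_{j\in\Z}2^{-js}\big\|[\dot\Delta_j,u\cdot\nabla]\ta\big\|_{L^2}\le C\|\nabla u\|_{L^2}\,\|\ta\|_{\dot H^{1-s}},\qquad -1<s<2,
\]
which costs only $\|\nabla u\|_{L^2_t(L^2)}\le CE_0$ but shifts the $\ta$-norm up by one derivative, to $\dot H^{1-s_0}$. Since $1-s_0$ is still negative, this is not yet controlled; the key idea you are missing is to \emph{iterate}: rerun the same localized estimate at regularity $1-s_0-\alpha/2$ (gaining $\alpha/2$ from the dissipative smoothing each time), then once more, until the index reaches $3-s_0-\alpha\in(0,\alpha]$, where $\|\ta\|_{L^2_t(\dot H^{3-s_0-\alpha})}$ is bounded by $\|\ta\|_{L^2_t(L^2)\cap L^2_t(\dot H^\alpha)}\lesssim E_0+\mathcal{G}_1$. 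Substituting back through the two iteration steps, each contributing a factor $\|\nabla u\|_{L^2_t(L^2)}\le CE_0$, gives exactly the polynomial structure $CE_0(1+E_0(1+E_0+\mathcal{G}_1))$. No Lipschitz control of $u$ is needed anywhere.
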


\begin{proof}
We first get by a similar derivation of \eqref{xinger} that
\begin{align}\label{feixing}
\|\dot{\Delta}_j\ta(t)\|_{L^2}\le e^{-ct2^{j\alpha}}\|\dot{\Delta}_j\ta_0\|_{L^2}
+C\int_0^te^{-c(t-t')2^{j\alpha}}
\big\|[\dot{\Delta}_j,u\cdot\na]\theta\big\|_{L^2} \,dt'.
\end{align}
To continue our argument, we will use the following commutator's estimate which the proof can be obtained as Lemma 3.3 in \cite{zhangping2017}:
$$\sum_{j\in\Z}2^{-js}\|[\dot{\Delta}_j,u\cdot\na]\theta\|_{L^2}
\le C\|\ta\|_{\dot{H}^{1-s}}\|\na u\|_{L^2}, \quad  -1<s<2.$$
Thus, a simple computation helps us get from \eqref{feixing} and the above estimate that
\begin{align}\label{sunqin1}
\|\ta\|_{\widetilde{L}^\infty_t(\dot{H}^{-s_0})}
\le &
\|\ta_0\|_{\dot{H}^{-s_0}}+C\int_0^t\sum_{j\in\Z}2^{-js_0}\|[\dot{\Delta}_j,u\cdot\na]\theta\|_{L^2}\,dt'
\nonumber\\
\le &
\|\ta_0\|_{\dot{H}^{-s_0}}+C\|\ta\|_{L^2_t(\dot{H}^{1-s_0})}\|\na u\|_{L^2_t(L^2)}
.
\end{align}
By the same manner, we have
\begin{align}\label{sunqin2}
\|\ta\|_{L^2_t(\dot{H}^{1-s_0})}
\le &
\|\ta_0\|_{\dot{H}^{1-s_0-\alpha/2}}+C\int_0^t\sum_{j\in\Z}2^{j(1-s_0-\alpha/2)}\|[\dot{\Delta}_j,u\cdot\na]\theta\|_{L^2}\,dt'
\nonumber\\
\le &
\|\ta_0\|_{\dot{H}^{1-s_0-\alpha/2}}+C\|\ta\|_{L^2_t(\dot{H}^{2-s_0-\alpha/2})}\|\na u\|_{L^2_t(L^2)}
\nonumber\\
\le &
\|\ta_0\|_{\dot{H}^{1-s_0-\alpha/2}}+C
(\|\ta_0\|_{\dot{H}^{2-s_0- \alpha}}+\|\ta\|_{L^2_t(\dot{H}^{3-s_0- \alpha})})\|\na u\|_{L^2_t(L^2)}
.
\end{align}
As $2/3<\alpha\le 1$ and $3-2\alpha<s_0<{4\alpha }/{q}-8\alpha +6$, one can infer $0< {3-s_0- \alpha}\le \alpha$, thus,
\begin{align}\label{sunqin3}
&\|\ta_0\|_{\dot{H}^{1-s_0-\alpha/2}}\le C\|\ta_0\|_{\dot{H}^{-s_0}\cap L^2},\quad
\|\ta_0\|_{\dot{H}^{2-s_0- \alpha}}\le C\|\ta_0\|_{\dot{H}^{-s_0}\cap {H}^{\alpha/2}},
\end{align}
and
\begin{align}\label{sunqin3+1}
\|\ta\|_{L^2_t(\dot{H}^{3-s_0- \alpha})}\le C\|\ta\|_{L^2_t(L^2)\cap L^2_t(\dot{H}^{\alpha}) }.
\end{align}

Inserting the estimates \eqref{sunqin3}, \eqref{sunqin3+1} into \eqref{sunqin2}, we can deduce from \eqref{sunqin1}  that
\begin{align}\label{sunqin4}
\|\ta\|_{\widetilde{L}^\infty_t(\dot{H}^{-s_0})}
\le &\|\ta_0\|_{\dot{H}^{-s_0}}\nonumber\\
&+(\|\ta_0\|_{\dot{H}^{-s_0}\cap L^2}+
(\|\ta_0\|_{\dot{H}^{-s_0}\cap {H}^{\alpha/2}}+\|\ta\|_{L^2_t(L^2)\cap L^2_t(\dot{H}^{\alpha}) })\|\na u\|_{L^2_t(L^2)})\|\na u\|_{L^2_t(L^2)}.
\end{align}
From decay estimate \eqref{529ee} and estimates \eqref{529e3}, \eqref{jiarui14}, we have
$$
\|\na u\|_{L^2_t(L^2)})\le C E_0,\quad \|\ta\|_{L^2_t(L^2)\cap L^2_t(\dot{H}^{\alpha}) }
\le C E_0+\mathcal{G}_1,$$
thus, taking the above estimates into \eqref{sunqin4}, we can finally arrive at \eqref{xiaoxiao12}.
\end{proof}

We are in a position to prove the
existence part of Theorem \ref{maintheorem}.
The strategy first is to
solve an appropriate approximate of \eqref{m} and then prove the uniform bounds for such approximate solutions, and the last step consists in proving the convergence of such approximate solutions to a solution of the original system.
One can check similar argument from page 1239 to page 1240 of \cite{zhangping2017} for details, here, we omit it.

  \bigskip

\subsection{ \large\bf The uniqueness of  Theorem \ref{maintheorem}}
In this subsection, we will present the uniqueness of  Theorem \ref{maintheorem}. As the $\ta$ equation has a supercritical regularity, thus, there will be more complicated discussion than \cite{zhangping2017}.
Let $u^i,\ta^i$ (with $i=1,2$) be two solutions of the system \eqref{m} which satisfy \eqref{zhengzexing1}, \eqref{zhengzexing2}.

Denote
$ (\delta u,\delta \ta,\nabla\delta \Pi)\triangleq(u^2-u^1,\ta^2-\ta^1,\nabla \Pi^2-\nabla \Pi^1).$
Then $(\delta u,\delta \ta,\nabla\delta \Pi)$ solves
\begin{eqnarray}\label{shiping3}
\left\{\begin{aligned}
&\partial_t\delta\theta+u^2\cdot\nabla\delta\theta+ |D|^\alpha \delta\theta =-\delta u\cdot\nabla\theta^1,\\
&\partial_t \delta u+ u^2\cdot\nabla \delta u-\diverg(\mu(\ta^2)d( \delta u))+\nabla\delta\Pi-\delta\theta e_2\\
&\hspace{6cm} =\diverg((\mu(\ta^2)-\mu(\ta^1)) d(u^1))-\delta u\cdot\nabla  u^1,\\
&\diverg \delta u =0,\\
&(\delta \theta,\delta u)|_{t=0}=(0,0).
\end{aligned}\right.
\end{eqnarray}

Taking $L^2$ inner product $\delta u$ with the $\delta u$ equation, $\delta \ta$ with the $\delta \ta$ equation in the above equation, using the H$\mathrm{\ddot{o}}$lder inequality and Young inequality, we can finally get that
\begin{align}\label{shiping5}
\frac12\frac{d}{dt}(\|\delta u\|_{L^2}^2+\|\delta \ta\|_{L^2}^2)+\|\nabla \delta u\|_{L^2}^2
+\|\delta \ta\|_{\dot{H}^{\alpha/2}}^2
\lesssim& \|\delta \ta\|_{L^{{4p}/{(4+4p-3\alpha p)}}}\|\na u^1\|_{L^{{4p}/{(3\alpha p-4-2p)}}}\|\nabla \delta u\|_{L^2}\nonumber\\
&+(1+\|\na \ta^1\|_{L^\infty}
+\|\na u^1\|_{L^2}^2)(\|\delta u\|_{}^2+\|\delta \ta\|_{L^2}^2)\nonumber\\
\lesssim& \varepsilon\|\nabla \delta u\|_{L^2}^2
+
\|\delta \ta\|_{L^{{4p}/{(4+4p-3\alpha p)}}}^2\|\na u^1\|_{L^{{4p}/{(3\alpha p-4-2p)}}}^2\nonumber\\
&+(1+\|\na \ta^1\|_{L^\infty}+\|\na u^1\|_{L^2}^2)(\|\delta u\|_{}^2+\|\delta \ta\|_{L^2}^2).
\end{align}
In the following,
 we will use the following  lemma of which the proof can be obtained similarly to Proposition 3.1 in \cite{zhangping2017} (with a small modification) to control the term $\|\delta \ta\|_{L^{{4p}/{(4+4p-3\alpha p)}}}^2$ in \eqref{shiping5}.
\begin{lemma}\label{yangyi1}
  Denote $\gamma\triangleq{{4p}/({4+4p-3\alpha p}}).$ Assume $p>{4}/{(3\alpha-2)}$ and $\ta_0\in\dot{B}_{\gamma,\infty}^{0}$,
let $v\in L^2_T(\dot{W}^{1,4/\alpha})$ be a solenoidal vector field and $f\in{L^2_t(\dot{B}_{\gamma,\infty}^{-\alpha/2})}$.
Then the equation below
$$\partial_t \ta+ u\cdot\nabla \ta+|D|^\alpha\ta=f\quad and \quad \ta|_{t=0}=\ta_0,$$
has a unique solution $\ta$ so that for $t \le T$
\begin{align}\label{yangyi2}
\|\ta\|_{L^\infty_t(\dot{B}_{\gamma,\infty}^{0})}
\lesssim&\big(
\|\ta_0\|_{\dot{B}_{\gamma,\infty}^{0}}
+\|f\|_{L^2_t(\dot{B}_{\gamma,\infty}^{-\alpha/2})}
\big)\exp(C\|\na v\|_{L^2_t(L^{4/\alpha})}).
\end{align}

\end{lemma}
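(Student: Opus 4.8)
The plan is to run a frequency-localized $L^\gamma$ energy estimate in the Chemin--Lerner space $\widetilde{L}^\infty_t(\dot{B}^0_{\gamma,\infty})$, treating the commutator $[\dot{\Delta}_j,v\cdot\na]\ta$ as a source that the fractional dissipation $|D|^\alpha$ can absorb (here $v$ is the divergence-free drift, written $u$ in the displayed equation). First I would apply $\dot{\Delta}_j$ to the equation and set $\ta_j\triangleq\dot{\Delta}_j\ta$ and $R_j\triangleq[\dot{\Delta}_j,v\cdot\na]\ta$, so that $\p_t\ta_j+v\cdot\na\ta_j+|D|^\alpha\ta_j=\dot{\Delta}_jf-R_j$. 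Testing against $|\ta_j|^{\gamma-2}\ta_j$, the transport term vanishes because $\diverg v=0$ (it equals $\tfrac1\gamma\int_{\R^2}v\cdot\na|\ta_j|^\gamma\,dx=0$), while the dissipative term obeys the generalized Bernstein positivity lower bound $\int_{\R^2}(|D|^\alpha\ta_j)\,|\ta_j|^{\gamma-2}\ta_j\,dx\ge c\,2^{j\alpha}\|\ta_j\|_{L^\gamma}^\gamma$. This lower bound is valid for exponents $\ge 2$, and since $p>4/(3\alpha-2)$ is exactly equivalent to $\gamma>2$, the hypothesis on $p$ is precisely what legitimizes this step. Dividing by $\|\ta_j\|_{L^\gamma}^{\gamma-1}$ and integrating via Duhamel's formula yields
\[
\|\ta_j(t)\|_{L^\gamma}\le e^{-ct2^{j\alpha}}\|\dot{\Delta}_j\ta_0\|_{L^\gamma}+\int_0^te^{-c(t-t')2^{j\alpha}}\big(\|\dot{\Delta}_jf\|_{L^\gamma}+\|R_j\|_{L^\gamma}\big)\,dt'.
\]

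The crux is a commutator estimate adapted to $\na v\in L^{4/\alpha}$ rather than $\na v\in L^\infty$; this is the ``small modification'' of Proposition 3.1 in \cite{zhangping2017} alluded to in the statement. Decomposing $v\cdot\na\ta$ by Bony's paraproduct and, in the remainder term, using $\diverg v=0$ to move a derivative onto $v$, then invoking Bernstein's inequality (Lemma \ref{bernstein}) and H\"older's inequality with the split $\tfrac1\gamma=\tfrac\alpha4+\tfrac1n$, where $\tfrac1n=1+\tfrac1p-\alpha$, I expect to obtain
\[
\|R_j\|_{L^\gamma}\lesssim c_j\,2^{j\alpha/2}\,\|\na v\|_{L^{4/\alpha}}\,\|\ta\|_{\dot{B}^0_{\gamma,\infty}},\qquad \sup_j c_j\lesssim 1.
\]
The admissibility of the intermediate exponent $n$ (namely $0<\tfrac1n\le1$) follows from the same range of $p$ and $\alpha$. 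The essential feature is that the derivative loss $2^{j\alpha/2}$ in the commutator is exactly repaid by the dissipative smoothing: a Cauchy--Schwarz in time against the semigroup kernel gives $\int_0^te^{-c(t-t')2^{j\alpha}}\|R_j\|_{L^\gamma}\,dt'\lesssim 2^{-j\alpha/2}\|R_j\|_{L^2_t(L^\gamma)}$, so the two powers of $2^{j\alpha/2}$ cancel and leave $\lesssim c_j\,\|\na v\|_{L^2_t(L^{4/\alpha})}\,\|\ta\|_{\widetilde{L}^\infty_t(\dot{B}^0_{\gamma,\infty})}$.

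The forcing term is handled the same way: $\int_0^te^{-c(t-t')2^{j\alpha}}\|\dot{\Delta}_jf\|_{L^\gamma}\,dt'\lesssim 2^{-j\alpha/2}\|\dot{\Delta}_jf\|_{L^2_t(L^\gamma)}$, and the embedding $\widetilde{L}^2_t\hookrightarrow L^2_t$ bounds $\sup_j2^{-j\alpha/2}\|\dot{\Delta}_jf\|_{L^2_t(L^\gamma)}$ by $\|f\|_{L^2_t(\dot{B}^{-\alpha/2}_{\gamma,\infty})}$. Taking the supremum over $j$ and over time in the Duhamel inequality then gives, with $X(t)\triangleq\|\ta\|_{\widetilde{L}^\infty_t(\dot{B}^0_{\gamma,\infty})}$, an inequality of the shape $X\lesssim\|\ta_0\|_{\dot{B}^0_{\gamma,\infty}}+\|f\|_{L^2_t(\dot{B}^{-\alpha/2}_{\gamma,\infty})}+C\|\na v\|_{L^2_t(L^{4/\alpha})}X$. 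A continuity/bootstrap argument, splitting $[0,t]$ into finitely many subintervals on each of which $C\|\na v\|_{L^2(L^{4/\alpha})}$ is small (possible since $\na v\in L^2_t(L^{4/\alpha})$), then closes the estimate and produces the exponential factor, i.e. \eqref{yangyi2}. Uniqueness is immediate, since the difference of two solutions with identical data solves the same equation with $\ta_0=0$ and $f=0$, whence \eqref{yangyi2} forces it to vanish; existence follows from a routine Friedrichs regularization, for which these uniform bounds supply the compactness needed to pass to the limit.

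The main obstacle is the commutator estimate in the low-regularity setting $\na v\in L^{4/\alpha}$: one must arrange the Bony--Bernstein bookkeeping so that the frequency loss is no worse than $2^{j\alpha/2}$, since that is the maximal loss the order-$\alpha/2$ dissipative gain can compensate. This delicate balance is exactly where the supercritical threshold $p>4/(3\alpha-2)$ (equivalently $\gamma>2$) and the range $2/3<\alpha\le1$ enter the argument, both through the validity of the positivity lower bound and through the admissibility of the intermediate H\"older exponents.
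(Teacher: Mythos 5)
Your proposal is correct and follows essentially the route the paper intends: the paper gives no proof of Lemma \ref{yangyi1} beyond citing Proposition 3.1 of \cite{zhangping2017}, and your frequency-localized $L^\gamma$ energy estimate (generalized Bernstein lower bound for $\gamma>2$, which is indeed equivalent to $p>4/(3\alpha-2)$, plus a Bony commutator bound losing $2^{j\alpha/2}$ repaid by the semigroup, with the H\"older split $1/\gamma=\alpha/4+1/n$ checking out) is exactly the ``small modification'' of that argument adapted to $|D|^\alpha$. The only discrepancy is that your absorption/bootstrap in time naturally produces the factor $\exp\big(C\|\na v\|_{L^2_t(L^{4/\alpha})}^2\big)$ rather than the first power written in \eqref{yangyi2}; this matches the form of the estimate in \cite{zhangping2017} and is harmless for the uniqueness application, so it should be read as a typo in the statement rather than a gap in your argument.
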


Applying the first equation in \eqref{shiping3} to the above Lemma \ref{yangyi1} yields
\begin{align}\label{yangyi3}
\|\delta \ta\|_{L^\infty_t(\dot{B}_{\gamma,\infty}^{0})}^2
\le&C
\|\delta u\cdot\nabla\theta^1\|_{L^2_t(\dot{B}_{\gamma,\infty}^{-\alpha/2})} ^2\exp(C\|\na u^2\|_{L^2_t(L^{4/\alpha})})\nonumber\\
\le&
C\exp(C\|\na u^2\|_{L^2_t(L^{4/\alpha})})
\int_0^t\|\delta u\|_{L^2}^2\|\theta^1\|_{\dot{B}_{p,\infty}^{\alpha}}^2
\,dt',
\end{align}
where the following estimate  has been used, which the proof will be given later
\begin{align}\label{yangyi5}
\|\delta u\cdot\nabla\theta^1\|_{L^2_t(\dot{B}_{\gamma,\infty}^{-\alpha/2})} \le&
C\|\delta u\|_{L^2}\|\theta^1\|_{\dot{B}_{p,\infty}^{\alpha}}.
\end{align}
Notice that for any positive integer $N$ and $p>{4}/{(3\alpha-2)},$ we have
\begin{align}\label{yangyi6}
\|\delta \ta\|_{L^{\gamma}}
\lesssim\|\delta \ta\|_{\dot{B}_{\gamma,2}^{0}}
\lesssim\|\delta \ta\|_{L^2}+\sqrt{N}\|\delta \ta\|_{\dot{B}_{\gamma,2}^{0}}
+2^{-N(\alpha/2+{2}/{\gamma}-1)}\|\delta \ta\|_{\dot{H}^{\alpha/2}},
\end{align}
taking $N$ in the above inequality such that
$$
2^{N(\alpha/2+{2}/{\gamma}-1)}\sim
\|\delta \ta\|_{\dot{H}^{\alpha/2}}\big/\|\delta \ta\|_{\dot{B}_{\gamma,\infty}^{0}},$$
then we have
\begin{align}\label{yangyi8}
\|\delta \ta\|_{L^{{4p}/{(4+4p-3\alpha p)}}}
=\|\delta \ta\|_{L^{\gamma}}
\lesssim\|\delta \ta\|_{L^2}+\|\delta \ta\|_{\dot{B}_{\gamma,\infty}^{0}}\ln^{1/2}
\Big(e+(\| \ta_1\|_{\dot{H}^{\alpha/2}}+\| \ta_2\|_{\dot{H}^{\alpha/2}})\big/\|\delta \ta\|_{\dot{B}_{\gamma,\infty}^{0}}\Big)
.
\end{align}
Taking \eqref{yangyi8} into \eqref{shiping5} and choosing $\varepsilon$ small enough, we have
\begin{align}\label{yangyi9}
&\|\delta u\|_{L^\infty_t(L^2)}^2+\|\delta \ta\|_{L^\infty_t(L^2)}^2)+\|\delta \ta\|_{L^\infty_t(\dot{B}_{\gamma,\infty}^{0})}^2+\|\nabla \delta u\|_{L^2_t(L^2)}^2
+\|\delta \ta\|_{L^2_t(\dot{H}^{\alpha/2})}^2\nonumber\\
&\quad\lesssim C\exp(C\|\na u^2\|_{L^2_t(L^{4/\alpha})})
\int_0^t\|\delta u\|_{L^2}^2\|\theta^1\|_{\dot{B}_{p,\infty}^{\alpha}}^2
\,dt'\nonumber\\
&\quad\quad+\int_0^t(1+\|\na \ta^1\|_{L^\infty}+\|\na u^1\|_{L^2}^2+\|\na u^1\|_{L^{{4p}/{(3\alpha p-4-2p)}}}^2)(\|\delta u\|_{}^2+\|\delta \ta\|_{L^2}^2)\,dt'\nonumber\\
&\quad\quad+\int_0^t\|\na u^1\|_{L^{{4p}/{(3\alpha p-4-2p)}}}^2\|\delta \ta\|_{\dot{B}_{\gamma,\infty}^{0}}^{2}\ln
\Big(e+(\| \ta_1\|_{\dot{H}^{\alpha/2}}+\| \ta_2\|_{\dot{H}^{\alpha/2}})\big/\|\delta \ta\|_{\dot{B}_{\gamma,\infty}^{0}}\Big)\,dt'.
\end{align}
Denote
$$Y(t)\triangleq\|\delta u\|_{L^\infty_t(L^2)}^2+\|\delta \ta\|_{L^\infty_t(L^2)}^2+\|\delta \ta\|_{L^\infty_t(\dot{B}_{\gamma,\infty}^{0})}^2.$$
One can deduce from \eqref{yangyi9} that
\begin{align}\label{yangyi10}
Y(t)\lesssim& C\exp(C\|\na u^2\|_{L^2_t(L^{4/\alpha})})
\int_0^tY(t')\|\theta^1\|_{\dot{B}_{p,\infty}^{\alpha}}^2
\,dt'\nonumber\\
&+C\int_0^t(1+\|\na \ta^1\|_{L^\infty}+\|\na u^1\|_{L^2}^2+\|\na u^1\|_{L^{{4p}/{(3\alpha p-4-2p)}}}^2)Y(t')\,dt'\nonumber\\
&+C\int_0^t\|\na u^1\|_{L^{{4p}/{(3\alpha p-4-2p)}}}^2Y(t')\ln
\Big(e+(\| \ta_1\|_{\dot{H}^{\alpha/2}}+\| \ta_2\|_{\dot{H}^{\alpha/2}})\big/Y(t')\Big)\,dt'.
\end{align}
For any
${8}/{(3\alpha-2)}\le p<{1}/{C\|\mu(\cdot)-1\|_{L^\infty}}$ and $2/3<\alpha\le 1$, by using the interpolation inequality, we have
\begin{align*}%\label{lingzhi1}
&\|\na \ta^1\|_{L^1_t(L^\infty)}\lesssim\| \ta^1\|_{\widetilde{L}^\infty_t(\dot{B}_{p,\infty}^{\alpha/2})}
^{\frac{3\alpha p-2p-4}{2\alpha p}}
\|\ta^1\|_{\widetilde{L}^1_t(\dot{B}_{p,\infty}^{3\alpha/2})}
^{\frac{2p-\alpha p+4}{2\alpha p}},
\end{align*}
\begin{align*}%\label{lingzhi2}
\|\na u^2\|_{L^2_t(L^{4/\alpha})}\lesssim\|\na u^2\|_{L^2_t(L^{2})}^{\frac{\alpha p-4)}{2(p-2)}}\|\na u^2\|_{L^2_t(L^{p})}^{\frac{p(2-\alpha)}{2(p-2)}},
\quad
\|\na u^1\|_{L^2_t(L^{{4p}/{(3\alpha p-4-2p)}})}^2
\lesssim\|\na u^1\|_{L^2_t(L^{2})}^\frac{3\alpha p-2p-8}{ p-2}\|\na u^1\|_{L^2_t(L^{p})}^\frac{4p+4-3\alpha p}{ p-2}.
\end{align*}
Thus, applying Osgood's Lemma \ref{osgood} to \eqref{yangyi10}, we can infer that
$Y(t)=0.$
This complete the uniqueness of Theorem \ref{maintheorem}.

Consequently, we have completed the proof of our's main Theorem \ref{maintheorem}.

\bigskip
\subsection*{\large\bf Acknowledgements} This work are supported by NSFC under grant number 11601533 and  the Postdoctoral Science Foundation
of China under  grant number 2016M592560.

\bigskip
\bigskip

\end{document}